\documentclass[a4paper, 12pt]{article}

\usepackage{amsmath} 
\usepackage{theorem}
\usepackage{amssymb}
\usepackage{amsfonts}
\usepackage{latexsym}
\usepackage{amscd}
\usepackage{diagramb}
\input grcalc3.sty
\usepackage{graphicx}

\headsep0mm
\topmargin-1cm
\textheight24cm
\textwidth16cm
\oddsidemargin0cm
\evensidemargin0cm

\theoremstyle{plain}
\theoremheaderfont{\normalfont\bfseries}

\newtheorem{thm}{Theorem}[section]

\newtheorem{lma}[thm]{Lemma}
\newtheorem{cor}[thm]{Corollary}

\newtheorem{defn}[thm]{Definition}

\theorembodyfont{\rmfamily}

\newtheorem{rem}[thm]{Remark}
\newtheorem{prop}[thm]{Proposition}

\newcommand{\qed}{\hfill\quad\fbox{\rule[0mm]{0,0cm}{0,0mm}}  \par\bigskip}

\newcommand{\R}{{\mathcal R}}

\newcommand{\w}{\hspace{-0,12cm}}
\newcommand{\q}{\hspace{-0,03cm}}

\newcommand{\iso}{\cong}
\newcommand{\ot}{\otimes}

\newcommand{\C}{{\mathcal C}}
\newcommand{\M}{{\mathcal M}}
\newcommand{\D}{{\mathcal D}}
\newcommand{\F}{{\mathcal F}}
\newcommand{\G}{{\mathcal G}}

\newcommand{\E}{{\mathcal E}}

\newcommand{\U}{{\mathcal U}}
\newcommand{\YD}{{\mathcal YD}}

\def\Zz{{\mathbb Z}}

\newcommand{\crta}{\overline}
\newcommand{\ev}{\it ev}

\newcommand{\Fi}{\varphi}

\newcommand{\teta}{\theta}

\newcommand{\BM}{\operatorname {BM}}
\newcommand{\Br}{\operatorname {Br}}
\newcommand{\Epsilon}{\varepsilon}

\newcommand{\Bi}{\operatorname {Im}}
\newcommand{\Ker}{\operatorname {Ker}}

\newcommand{\End}{\operatorname {End}}
\newcommand{\Aut}{\operatorname {Aut}}
\newcommand{\Alg}{\operatorname {Alg}}
\newcommand{\Coalg}{\operatorname {Co-Alg}}

\newcommand{\BD}{\operatorname {BD}}
\newcommand{\BQ}{\operatorname {BQ}}

\newcommand{\pH}{\hspace{-0,06cm}\cdot_{\hspace{-0,1cm}_H}\hspace{-0,1cm}}
\newcommand{\pB}{\hspace{-0,06cm}\cdot_{\hspace{-0,1cm}_B}\hspace{-0,1cm}}

\newcommand{\tr}{\triangleright} 
\newcommand{\tl}{\triangleleft} 

\newcommand{\rt}{\rtimes}

\newcommand{\Gal}{\operatorname{Gal}}

\newcommand{\BW}{\operatorname {BW}}

\newcommand{\cref}[1]{C.~\ref{c:#1}}

\newcommand{\lelabel}[1]{\label{le:#1}}
\newcommand{\leref}[1]{Lemma~\ref{le:#1}}
\newcommand{\eqlabel}[1]{\label{eq:#1}}
\newcommand{\equref}[1]{(\ref{eq:#1})}
\newcommand{\thlabel}[1]{\label{th:#1}}
\newcommand{\thref}[1]{Theorem~\ref{th:#1}}

\newcommand{\prlabel}[1]{\label{pr:#1}}
\newcommand{\prref}[1]{Proposition~\ref{pr:#1}}
\newcommand{\colabel}[1]{\label{co:#1}}
\newcommand{\coref}[1]{Corollary~\ref{co:#1}}

\newcommand{\rmlabel}[1]{\label{rm:#1}}
\newcommand{\rmref}[1]{Remark~\ref{rm:#1}}

\newcommand{\sslabel}[1]{\label{ss:#1}}
\newcommand{\ssref}[1]{Subsection~\ref{ss:#1}}

\headheight 14.5pt

\begin{document}

\title{{\bf The Hopf automorphism group \\
and the quantum Brauer group \\ 
in braided monoidal categories}}
\author{{\large B. Femi\'c \vspace{2pt}} \\
{\small Facultad de Ingenier\'ia, \vspace{-2pt}}\\
{\small  Universidad de la Rep\'ublica} \vspace{-2pt}\\
{\small  Julio Herrera y Reissig 565, \vspace{-2pt}}\\ 
{\normalsize 11 300 Montevideo, Uruguay}\\
{\normalsize e-mail: bfemic@fing.edu.uy}}


\date{May 17, 2011}
\maketitle


\vspace{-0.0cm}

\begin{abstract}
With the motivation of giving a more precise estimation of the quantum Brauer group of a Hopf
algebra $H$ over a field $k$ we construct an exact sequence containing the quantum Brauer group
of a Hopf algebra in a certain braided monoidal category. Let $B$ be a Hopf algebra in $\C=_H ^H\YD$,
the category of Yetter-Drinfel'd modules over $H$. We consider the quantum Brauer group $\BQ(\C; B)$
of $B$ in $\C$, which is isomorphic to the usual quantum Brauer group $\BQ(k; B\rtimes H)$ of the
Radford biproduct Hopf algebra $B\rtimes H$. We show that under certain symmetricity condition on
the braiding in $\C$ there is an inner action of the Hopf automorphism group of $B$ on the former.
We prove that the subgroup $\BM(\C; B)$ - the Brauer group of module algebras over $B$ in $\C$ -
is invariant under this action for a family of Radford biproduct Hopf algebras. 
The analogous invariance we study for $\BM(k; B\rtimes H)$. We apply our recent results on the
latter group and generate a new subgroup of the quantum Brauer group of $B\rtimes H$. In particular,
we get new information on the quantum Brauer groups of some known Hopf algebras.
\end{abstract}


\indent \hspace{0.3cm} {\bf Keywords:} Brauer group, Azumaya algebras, Hopf algebras, Drinfel'd double, Braided monoidal categories.

\indent \hspace{0.3cm} {\bf Mathematics Subject Classification (2010):} 18D10, 18D35, 16T05.

\setcounter{tocdepth}{1}

\renewcommand{\theequation}{\thesection.\arabic{equation}}

\section{Introduction}

In the line of development of the theory of the Brauer group, which in its origins rested on
algebras over a field without any further structure, a particular interest in the last dozen of
years occupies the quantum Brauer group.
It is the Brauer group of Yetter-Dinfel'd module algebras introduced in \cite{CVZ1}. From the
point of view of category theory the mentioned development passed from the studies performed
in the category of vector spaces -- the original Brauer group of a field introduced in 1929;
via the category of modules over a commutative ring $R$; that of vector spaces (or modules over $R$)
which are graded by an abelian group; the category of dimodules over a commutative and cocommutative
Hopf algebra $H$, called the Brauer-Long group and denoted by $\BD(R; H)$; the category of Yetter-Dinfel'd
modules over a Hopf algebra $H$ with a bijective antipode -- the quantum Brauer group $\BQ(R; H)$ --
just to mention some of them.
The categorical background of the Brauer group was accomplished in \cite{P4} and \cite{VZ1}. The latter
work revealed that almost all the studied Brauer groups are Brauer groups of a certain braided
monoidal category. When a Hopf algebra $H$ is commutative and cocommutative, then a Yetter-Dinfel'd
module over $H$ is precisely an $H$-dimodule and the respective categories coincide. In this sense
the quantum Brauer group generalizes the Brauer-Long group.

Not much progress has been achieved on the computation of the quantum Brauer group so far. This
seems to be a difficult task. Though, the Brauer-Long group and its subgroups have extensively been
studied 
\cite{B1, Deg, Cae3, Cae4, U3, CVZ2}.
Furthermore, in \cite{CF1} we gave a method to compute $\BM(R; H)$ -- the Brauer group of module
algebras over a Hopf algebra $H$ - where $H$ is a quasitriangular Radford biproduct of a certain kind.
It unifies several computations of this group performed in the last years for certain cases of $H$. A
first approximation of the quantum Brauer group itself was attained in \cite{VZ4}. There the following
exact sequence is constructed:
\vspace{-0,1cm}
\begin{equation} \eqlabel{seq FZ} \hspace{-0,6cm}
\bfig
\putmorphism(280, 0)(1, 0)[1`G(D(H)^*)`]{420}1a
\putmorphism(700, 0)(1, 0)[\phantom{G(D(H)^*)}`G(D(H))`]{610}1a
\putmorphism(1310, 0)(1, 0)[\phantom{G(D(H))}`\Aut(H)`]{600}1a
\putmorphism(1910, 0)(1, 0)[\phantom{\Aut(H)}`\BQ(R;H).`\Upsilon]{600}1a
\efig
\end{equation}
Here $G(D(H)^*)$ and $G(D(H))$ are the groups of group-likes of the dual of the Drinfel'd double $D(H)$ of $H$
and of $D(H)$, respectively, and $\Aut(H)$ is the Hopf automorphism group of $H$. The Hopf algebra $H$
is faithfully projective and has a bijective antipode.
When $D(H)$ is commutative, equivalently, when $H$ is commutative and cocommutative, the map $\Upsilon$ is
injective. Consequently, the Hopf automorphism group can be embedded into the Brauer-Long group. This
recovers Deegan-Caenepeel's embedding result \cite[Theorem 4.1]{Cae4} (originally, Deegan's embedding
theorem claims the corresponding group embedding for a finite abelian group in place of $H$).
As pointed out in \cite{VZ4}, the peculiarity of the
quantum Brauer group is that it may be an infinite torsion group although the Hopf algebra is small and finite-dimensional. This is owed to the same feature of the automorphism group of a non-commutative
non-cocommutative Hopf algebra (whereas if $H$ is a finite-dimensional commutative and cocommutative
Hopf algebra, then $\Aut(H)$ is a finite group).
\par\medskip

The idea in the present paper is to construct an analogous sequence to \equref{seq FZ}
for a Hopf algebra in a suitable braided monoidal category. Let $H$ be a Hopf algebra with a bijective
antipode over a field $k$ and let $\C=_H^H\YD$ be the (braided monoidal)
category of Yetter-Drinfel'd modules over $H$. Consider a Hopf algebra $B$ in $\C$, that is, a braided
Hopf algebra. On the one hand, we have that the Radford biproduct $B\rtimes H$ is a Hopf algebra over $k$
and the sequence \equref{seq FZ} can be applied to it. We will refer to this kind of application of
\equref{seq FZ} as to the 'base level'. On the other hand, our idea is to, so to say, lift the sequence
to one level higher, namely, to substitute the category $Vec$ of vector spaces over $k$ by $\C$, and the Hopf
algebra $H\in Vec$ by the Hopf algebra $B\in\C$. Let ${}_B ^B\YD(\C)$ denote the category of Yetter-Drinfel'd
modules over $B$ in $\C$. In \thref{seqBQ}, the main theorem of the paper, we prove that if $B$ is
finite-dimensional and such that the braiding $\Phi$ in $\C$ fulfills $\Phi_{B,M}=\Phi_{M,B}^{-1}$ for all
$M\in\C$ (we say that $\Phi_{B,M}$ is symmetric), then there is an exact sequence of groups:
\begin{equation} \eqlabel{seqBQintro}
\bfig
\putmorphism(-200, 0)(1, 0)[1`G_{\C}(D(B)^*)`]{500}1a
\putmorphism(300, 0)(1, 0)[\phantom{G_{\C}(D(B)^*)}`G_{\C}(D(B))` ]{700}1a
\putmorphism(1000, 0)(1, 0)[\phantom{G(_{\C}D(B))}`\Aut(\C;B)`\Theta]{700}1a
\putmorphism(1700, 0)(1, 0)[\phantom{\Gal(\C;H)}`\BQ(\C; B).`\Pi]{700}1a
\efig
\end{equation}
The groups involved in the sequence are categoric analogues of their counterparts from the sequence
\equref{seq FZ}. 
When $B$ is commutative and cocommutative in $\C$ we obtain Deegan-Caenepeel's kind of result (\coref{main cor BQ}).
We construct our sequence performing most of the computations in braided diagrams in an arbitrary braided
monoidal category $\E$. Thus proving \thref{seqBQ} we also obtain several more general results
(the whole section 2 and section 3 until the map $\Pi$ including \prref{inner action}). These contain
some results on group-likes and the Drinfel'd double of a Hopf algebra in $\E$, and some results on
inner hom-objects in $_B ^B\YD(\E)$. Observe that direct computations in the category $_B ^B\YD(\C)$ would give
rather complicated expressions 
too involved to handle. This illustrates how powerful tool the theory of
braided monoidal categories is.

We apply the sequence \equref{seqBQintro} to the family of Radford biproduct Hopf algebras $B\rtimes H$ that we studied in \cite{CF1}.
Here $H$ is quasitriangular and $B$ is a module over $H$. Then $B$ can be made into a left $H$-comodule
so that it becomes a Yetter-Drinfel'd module. In this particular case of a Radford biproduct the Hopf
algebra $B\rtimes H$ is called {\em bosonization} in \cite{Maj3}. We prove that in the named 
family of Hopf algebras $\Phi_{B,M}$ is symmetric for every $M\in\C$.
Moreover, $G_{\C}(D(B))$ turns out to be
trivial and $\Aut(\C; B)\iso k^*$ (the group of units of $k$). Thus we obtain that $k^*$ embeds into
$\BD(\C; B)$, the Brauer-Long group of $B$ in $\C$. On the other hand, by \cite{Besp} it is
${}_B ^B\YD(\C) \iso {}_{B\rtimes H} ^{B\rtimes H}\YD$ as braided monoidal categories. This has for a
consequence the group isomorphism $\BQ(\C; B)\iso\BQ(k; B\rtimes H)$. Since $B$ is commutative and
cocommutative in $\C$, we finally get $k^*\hookrightarrow\BQ(k; B\rtimes H)$. That the group of units of
a field (modulo $\Zz_2$) embeds into a quantum Brauer group was shown for Sweedler's Hopf algebra
in \cite{VZ4}. Our result generalizes this fact to the quantum Brauer group of every Radford biproduct
Hopf algebra from our family. 

In \cite{CVZ2} it was proved that there is an inner action of the Hopf automorphism group of $H$ on the
quantum Brauer group of $H$. We lift this result categorically to the Hopf algebra $B$ in $\C$, with the
assumption that $\Phi_{B,B}$ and $\Phi_{B,\U(A)}$ are symmetric for every Azumaya algebra $A$ in ${}_B ^B\YD(\C)$,
where $\U: {}_B ^B\YD(\C) \to\C$ is the forgetful functor,
(\coref{inner action}). Observe that these symmetricity conditions on the braiding are fulfilled in our family
of Radford biproducts. Since $B$ in this family is cocommutative, it is quasitriangular
with the quasitriangular structure $\R_B=1_B\ot 1_B$. 
Then every $B$-module is a trivial $B$-comodule and a Yetter-Drinfel'd module in $\C$.
Therefore we may consider the subgroup $\iota: \BM(\C; B)\hookrightarrow\BQ(\C; B)$, the former being the
Brauer group of module algebras over $B$ in $\C$. We show that $\BM(\C; B)$ is stable under the action
of $\Aut(\C; B)$. This leads us to conclude that $(\iota\ot\Pi)(\BM(\C; B) \rtimes \Aut(\C; B)) \iso
\BM(\C; B)\rtimes k^*$ is a subgroup of $\BQ(k; B\rtimes H)$, \prref{stable subgr}.
Using the results we obtained in \cite{CF1}, 
in \coref{known BM in BQ} we prove that there is an embedding
$$(\BM(k; k\Zz_{2m}) \times (k,+)^{r_1+...+r_n})\rtimes k^*\hookrightarrow \BQ(k; B\rtimes H)$$
for certain natural numbers $r_1,..., r_n$.

At the base level, since $B\rtimes H$ is quasitriangular, we may consider the subgroup $\BM(k; B\rtimes H)$
of $\BQ(k; B\rtimes H)$. We
study under which conditions it is stable under the action of $\Aut(B\rtimes H)$.
We prove that these are fulfilled by a certain subfamily $H(m, n)=B\rtimes H'$ of Hopf algebras $B\rtimes H$.
In \prref{stable subgr BM Hnm} we obtain the following subgroup of its quantum Brauer group:
$$(\BM(k; k\Zz_{2m}) \times (k,+)^{\frac{n(n+1)}{2}})\rtimes GL_n(k)/U_{2m} \hookrightarrow
\BQ(k, H(m,n)),$$
where $U_{2m}$ denotes the group of $2m$-th roots of unity.
In particular, we get approximations for the quantum Brauer
group of Sweedler's Hopf algebra, Radford Hopf algebra and Nichols Hopf algebra.

The Taft algebra is a Radford biproduct Hopf algebra for which the symmetricity condition in our main
theorem is not satisfied. So we can not approximate the quantum Brauer group on the level of the
corresponding braided Hopf algebra, but we do compute the sequence \equref{seq FZ} - on the base level -
contributing thus to the list of examples treated in \cite{VZ4}.
\par\bigskip

The paper is organized as follows. We first recall the necessary definitions and present some
basic structures related to a Hopf algebra $B$ in any braided monoidal category $\E$. The third
section treats group-likes and the Drinfel'd double of $B$ in $\E$. In the fourth one we
proceed to construct the group map from the Hopf automorphism group of $B$ in $\C$ to the respective
quantum Brauer group and study when the former acts by an inner action on the latter. We omit much of the
proofs in this part giving just a sketch of them in order to spear space and make the reading more fluent.
The fifth section carries the construction of the exact sequence containing the above
two groups. In the last section we apply the sequence to a family of Hopf algebras obtaining new results
on the corresponding quantum Brauer groups.
\par\bigskip

{\bf Aknowledgements.} The author wishes to thank to the Mathematical Institute of Serbian Academy of
Sciences and Arts in Belgrade, where a part of the work has been done, and to PEDECIBA from Uruguay
which supported me with a postdoctoral fellowship.



\section{Preliminaries}

Throughout let $k$ be a field 
and let $H$ be a 
Hopf algebra over $k$ with a bijective antipode $S$. A $k$-module $M$ which is simultaneously a left
$H$-module and a left $H$-comodule is called a left-left {\em Yetter-Drinfel'd $H$-module}
if it obeys the equivalent compatibility conditions:
\begin{equation}\eqlabel{YDll-1}
h_{(1)} m_{[-1]}\ot (h_{(2)}\cdot m_{[0]})=(h_{(1)}\cdot m)_{[-1]} h_{(2)}\ot (h_{(1)}\cdot m)_{[0]}
\end{equation}
\begin{equation}\eqlabel{YDll-2}
\lambda(h\cdot m)=h_{(1)} m_{[-1]}S(h_{(3)})\ot (h_{(2)}\cdot m_{[0]})
\end{equation}
for $m\in M, h\in H$. In the literature sometimes the term $H$-crossed module or Yang-Baxter
$H$-module is used. A {\em Yetter-Drinfel'd $H$-module algebra} is a Yetter-Drinfel'd $H$-module $A$
which is a left $H$-module algebra and a left $H$-comodule algebra. Left-left Yetter-Drinfel'd modules
and left $H$-linear and $H$-colinear maps form a braided monoidal category
$\C:={}_H ^H\YD$. A Yetter-Drinfel'd $H$-module algebra $A$ is then equivalently an algebra in $\C$.
A braiding $\Phi: M\ot N\to N\ot M$ in $\C$ and its inverse are given by
\begin{equation} \eqlabel{braidingD}
\Phi(m\ot n)=n_{[0]}\ot S^{-1}(n_{[-1]})\cdot m; \qquad \Phi^{-1}(m\ot n)=m_{[-1]}\cdot n\ot m_{[0]}
\end{equation}
for $m\in M, n\in N$. 

Now assume that $B$ is a Hopf algebra in $\C$ (a braided Hopf algebra). This situation occurs in the
Radford biproduct Hopf algebra $B\rtimes H$ where $H$ is an ordinary Hopf algebra \cite{Rad1}.
Concretely, one has that $B\rtimes H$ is a bialgebra, with 
$$\begin{array}{rl}
\textnormal{smash product:} 
\hskip-1em&(b\rt h)(b'\rt h')= b(h_{(1)}\tr b')\rt h_{(2)}h' \\
\textnormal{smash coproduct:\index{smash coproduct}} \quad &\Delta(b\rt h)=
(b_{(1)}\rt b_{{(2)}_{[-1]}} h_{(1)})\ot  (b_{{(2)}_{[0]}}\rt h_{(2)})
\end{array}$$
and componentwise unit and counit, if and only if $B$ is a bialgebra in $\C$ (that is : $B$ is a
left $H$-module algebra, a left $H$-module coalgebra, a left $H$-comodule algebra and a left
$H$-comodule coalgebra), it fulfills the (bialgebra compatibility in $\C$) condition:
$$
\Delta_B(ab)=a_{(1)}(a_{{(2)}_{[-1]}}\tr b_{(1)})\ot a_{{(2)}_{[0]}}b_{(2)}
$$
for $a, b\in B$ and it satisfies the equivalent Yetter-Drinfel'd conditions \equref{YDll-1} and
\equref{YDll-2}. Moreover, if $B$ has a convolution inverse $S_B$ to $id_B$, then $B\rtimes H$ is a
Hopf algebra with the antipode $S(b\rt h)=
(1_B\rt S_H(b_{[-1]}h))(S_B(b_{[0]})\rt 1_H)$.

\begin{rem}
Note that in the definitions of the smash product and coproduct, the bialgebra compatibility of $B$
and the antipode for $B\rtimes H$ we used the same formulas as in \cite[Theorem 2.1 and Proposition 2]{Rad1}
which are expressed in terms of our $\Phi^{-1}$ which usually is taken to be a braiding in ${}_H ^H\YD$.
Strictly speaking, we should change these definitions according to $\Phi$. However, when $B$ is commutative
or cocommutative it is a Hopf algebra both in $(\C, \Phi)$ and in $(\C, \Phi^{-1})$. Our examples
are of this form.
\end{rem}

Let ${}_B^B\YD(\C)$ be the category of left-left Yetter-Drinfel'd $B$-modules in $\C$.
An object $M\in {}_B^B\YD(\C)$ is a left-left Yetter-Drinfel'd $H$-module which additionally satisfies
the compatibility condition (the analogue of \equref{YDll-1} - instead of the ordinary flip the
braiding $\Phi^{-1}$ appears):
$$(b_{(1)}\pB({b_{(2)}}_{[-1]} \pH m))_{[-1]_B} \cdot (((b_{(1)}\pB({b_{(2)}}_{[-1]}\pH m))_{[0]_B})_{[-1]_H}\tr {b_{(2)}}_{[0]})
\rt ((b_{(1)}\pB({b_{(2)}}_{[-1]}\pH m))_{[0]_B})_{[0]_H} $$
\vspace{-0,4cm}
$$= b_{(1)} ({b_{(2)}}_{[-1]} \pH m_{[-1]_B}) \ot {b_{(2)}}_{[0]} m_{[0]_B}$$
for $m\in M$. Suppose both $S_B$ and $S_H$ are bijective.
By the left version of \cite[Proposition 4.2.3]{Besp} there is an isomorphism of braided monoidal categories
\begin{equation} \eqlabel{iso double-YD}
{}_B ^B\YD(\C) \iso {}_{B\rtimes H} ^{B\rtimes H}\YD.
\end{equation}
An object $M\in {}_B ^B\YD(\C)$ is a Yetter-Drinfel'd $B\rtimes H$-module by:
$(b\rt h)\cdot m=b\pB (h\pH m)$ and $\lambda(m)=m_{[-1]_B} \ot {m_{[0]_B}}_{[-1]_H} \ot {m_{[0]_B}}_{[0]_H}$,
whereas $N\in {}_{B\rtimes H} ^{B\rtimes H}\YD$ becomes an object in ${}_B ^B\YD(\C)$ via:
$h\pH n=(1_B\rt h)n, b\pB n=(b\rt 1_H)n, n_{[-1]_H} \ot n_{[0]_H}=(\Epsilon_B\ot H\ot N)\lambda^{B\rt H}(n)$
and $n_{[-1]_B} \ot n_{[0]_B}=(B\ot \Epsilon_H\ot N)\lambda^{B\rt H}(n)$ for $n\in N$.
The Yetter-Drinfel'd compatibility condition and the braiding $\Psi: M\ot N \to N\ot M$ and its inverse in
$\D:={}_{B\rtimes H} ^{B\rtimes H}\YD$ are analogous of \equref{YDll-1} and \equref{braidingD} where
$H$ should be replaced by $B\rtimes H$.

\subsection{The corresponding Brauer groups}

The Brauer group of left-right Yetter-Drinfel'd modules over a Hopf algebra $H$ over a commutative
ring $R$ was introduced in \cite{CVZ1}. That is the Brauer group of the braided monoidal category
${}_H\YD^{H^{op}}$, \cite{VZ1}. As we proved in \cite{Femic1}, the categories ${}_H\YD^{H^{op}}$ and
$\C$ are braided monoidally isomorphic (assume $R$ is a field). Since the Brauer group is invariant
under the equivalence of braided monoidal categories, the corresponding two Brauer groups are isomorphic.
Let us review the 
key pieces in the construction of the Brauer group of $\C$. A {\em Yetter-Drinfel'd $H$-opposite algebra}
$A^{op}$ of a left-left Yetter-Drinfel'd $H$-module algebra $A$ is $A^{op}=A$ as a vector space with the same
$H$-structures whose multiplication is given via:
$$a^{op} \cdot_{op} b^{op}= b_{[0]}\ot S^{-1}(b_{[-1]})\cdot a$$
for $a, b\in A$. In fact, $A^{op}$ is the opposite algebra of $A$ in the category $\C$.
A left-left {\em  Yetter-Drinfel'd $H$-Azumaya algebra} (equivalently, an Azumaya algebra in $\C$)
is a 
finite dimensional left-left Yetter-Drinfel'd $H$-module algebra $A$ for
which the two Yetter-Drinfel'd module algebra maps:
\begin{equation} \eqlabel{map F}
F: A\ot A^{op} \to \End(A), \quad F(a\ot b^{op})(c)=ac_{[0]} (S^{-1}(c_{[-1]})\cdot b),
\end{equation}
\begin{equation} \eqlabel{map G}
G: A^{op}\ot A \to \End(A)^{op}, \quad G(a^{op}\ot b)(c)=a_{[0]} (S^{-1}(a_{[-1]})\cdot c)b
\end{equation}
are isomorphisms. Note that the multiplication on $A\ot B$ for two algebras $A, B\in\C$ is given
(in terms of $\Phi$) by: $(a\ot b)(a'\ot b')=a a'_{[0]}\ot (S_H^{-1}(a'_{[-1]})\pH b)b'$.
The endomorphism ring $\End(M)$ of any 
finite dimensional left-left Yetter-Drinfel'd
$H$-module $M$ is a left-left Yetter-Drinfel'd $H$-module algebra with the $H$-structures
given by
\begin{equation}\eqlabel{H-mod end}
(h\cdot f)(m)=h_{(1)} f(S(h_{(2)})\cdot m)
\end{equation}
\begin{equation}\eqlabel{H-comod end}
\lambda(f)(m)=f_{[-1]}\ot f_{[0]}(m)=f(m_{[0]})_{[-1]}S^{-1}(m_{[-1]})\ot f(m_{[0]})_{[0]}
\end{equation}
for $h\in H, f\in\End(M)$ and $m\in M$.
Its Yetter-Drinfel'd $H$-opposite algebra $\End(M)^{op}$ is a left-left Yetter-Drinfel'd $H$-module algebra
with the following left $H$-module and $H$-comodule structures: \hspace{-0,3cm}
\begin{equation}\eqlabel{H-mod end-op}
(h\cdot f^{op})(m)=h_{(2)} f(S^{-1}(h_{(1)})\cdot m)
\end{equation}
\begin{equation}\eqlabel{H-comod end-op}
\lambda(f^{op})(m)=f^{op}_{[-1]}\ot f^{op}_{[0]}(m)=S(m_{[-1]})f(m_{[0]})_{[-1]}\ot f(m_{[0]})_{[0]}.
\end{equation}
(The Yetter-Drinfel'd compatibility for $\End(M)^{op}$ is proved using the left $H$-linear and $H$-colinear
isomorphism $M\ot M^*\to \End(M)$, where $M\ot M^*$ is a left-left Yetter-Drinfel'd $H$-module considered
as a left $H^{cop}$-module and $H^{op}$-comodule. Here $M^*$ -- the linear dual of $M$ -- is regarded with
the structures \equref{H-mod end-op} and \equref{H-comod end-op} with $f^{op}\in M^*$.)
\par\medskip

To consider the Brauer group of $\D$, replace 
the Hopf algebra $H$ by the Hopf algebra $B\rt H$ in the above construction. The braiding $\Phi$ in $\C$
is replaced by the braiding $\Psi$ in $\D$. To outline the difference in the structures in $\C$ and
in $\D$ we will denote the $B\rt H$-opposite algebra of $A$ by $\crta A$ and the Azumaya defining maps
in $\D$ by
$$
\crta F: A\crta\ot \crta A \to \End(A), \quad \crta G: \crta A\crta\ot A \to \crta\End(A).
$$
Here $\crta\ot$ stands for the tensor product in $\D$. The algebra $\crta A$ has the same $B\rt H$-structures
as $A$.
\par\medskip



\subsection{Preliminaries for the braided constructions}

The computations in the category ${}_B ^B\YD(\C)$ may become rather complicated. We will simplify them by
taking advantage of the powerful tool of the theory of braided monoidal categories and performing the
computations using braided diagrams. We assume the reader is familiar with the theory. For the definition
of basic algebraic structures in braided monoidal categories we refer to \cite{Besp}.
Let $\E$ be a braided monoidal category with braiding $\Phi$ (the same notation as for the braiding
in $\C$) and the unit object $I$. We assume $\E$ is strict. We use the following notation for the structures
in $\E$ of an algebra $A$, a coalgebra $C$, a Hopf algebra $H$ with antipode $S$, a left $A$-module
and a left $C$-comodule $M$, the braiding and a morphism $f: X\to Y$:
\begin{center}
\begin{tabular}{ccp{0.5cm}cccp{0.5cm}c}
\multicolumn{2}{c}{{\footnotesize Algebra}} & & \multicolumn{2}{c}{{\footnotesize Coalgebra}} & & {\footnotesize Antipode} \\
{\footnotesize Unit} & {\footnotesize Multiplication} & & {\footnotesize Counit} & {\footnotesize Comultiplication} & & \\
$\eta_A=
\gbeg{1}{3}
\got{1}{} \gnl
\gu{1} \gnl
\gob{1}{A}
\gend$ & $\nabla_A=\gbeg{2}{3}
\got{1}{A} \got{1}{A} \gnl
\gmu \gnl
\gob{2}{A}
\gend$ & & $\varepsilon_C=
\gbeg{1}{3}
\got{1}{C} \gnl
\gcu{1} \gnl
\gob{1}{}
\gend$ &  $\Delta_C=
\gbeg{2}{3}
\got{2}{C} \gnl
\gcmu \gnl
\gob{1}{C} \gob{1}{C}
\gend$ & & $\gbeg{3}{5}
\gvac{1} \got{1}{H} \gnl
\gvac{1} \gcl{1} \gnl
\gvac{1} \gmp{+} \gnl
\gvac{1} \gcl{1} \gnl
\gvac{1} \gob{1}{H}
\gend$ \vspace{5pt} \\
\multicolumn{2}{c}{{\footnotesize }} & & \multicolumn{2}{c}{\footnotesize Braiding} & & {\footnotesize Morphism}  \\
{\footnotesize Module} & {\footnotesize Comodule} & &  &  & \\
$\mu_M= \gbeg{3}{3}
\got{1}{A} \got{1}{M} \gnl
\glm \gnl
\gob{3}{M}
\gend$ & $\lambda_M=\gbeg{2}{3}
\got{3}{M} \gnl
\glcm \gnl
\gob{1}{C} \gob{1}{M}
\gend$ & &  $\Phi_{X,Y}=\gbeg{1}{3}
\got{1}{X} \got{1}{Y} \gnl
\gbr \gnl
\gob{1}{Y} \gob{1}{X}
\gend$ & $\quad \Phi_{Y,X}^{-1}=
\gbeg{2}{3}
\got{1}{X} \got{1}{Y} \gnl
\gibr \gnl
\gob{1}{Y} \gob{1}{X}
\gend$ & &
 $\gbeg{3}{5}
\gvac{1} \got{1}{X} \gnl
\gvac{1} \gcl{1} \gnl
\gvac{1} \gbmp{f} \gnl
\gvac{1} \gcl{1} \gnl
\gvac{1} \gob{1}{Y}
\gend$
\end{tabular}
\end{center}
We will normally suppress the strings for the unit object $I$ as well as writing it when it appears
as a domain or a codomain of morphisms. The sign ``+'' stands for the antipode $S$, whereas the
sign ``-'' denotes its inverse.
We will consider the comultiplication of the co-opposite coalgebra of $C\in\E$ to be given by
$\Delta_{C^{cop}}=\Phi^{-1}\Delta_C$ rather than with the positive sign of the braiding - in order that the
opposite co-opposite $B^{op, cop}$ of a
bialgebra $B\in\E$ be a bialgebra, see \cite[1.3, 1.6]{Tak2}, \cite{Femic1}.

We will assume that $\E$ is a closed category, meaning that the tensor functors have their (right) adjoints,
which give rise to the inner hom-objects $[X,Y]$ with $X,Y\in\E$. The counit of the adjunction
$(-\ot X, [X, -])$ is denoted by $ev_{[X,Y]}: [X,Y]\ot X\to Y$.
An object $M$ is {\em finite} in $\E$ if the morphism $db: M\ot [M,I]\to [M,M]$ defined via
$ev_{[M,M]}(db\ot M)=M\ot ev_{[M,I]}$ is an isomorphism.
An object $M^*\in\E$ together with a morphism $e_M:M^*\ot M \to I$ is called a {\em dual object} for $M$
if there exists a morphism $c_M:I\to M\ot M^*$ such that $(M \otimes e_M)(c_M \ot M)=id_M$ and
$(e_M \ot M^*)(M^* \ot c_M)=id_{M^*}$. The morphisms $e_M$ and $c_M$ are called {\em evaluation} and
{\em coevaluation} respectively and we denote them in braided diagrams by: \vspace{-0,44cm}
$$e_M=
\gbeg{3}{1}
\got{1}{M^*} \got{1}{M} \gnl
\gev \gnl
\gob{1}{}
\gend
\qquad\textnormal{and}\qquad
c_M=
\gbeg{3}{3}
\got{1}{} \gnl
\gdb \gnl
\gob{1}{M} \gob{2}{\hspace{-0,2cm}M^*.}
\gend 
$$
For a finite object $M$ it is actually $M^*=[M,I]$ and $\ev_{[M,I]}=\ev_{M^*}$. A morphism between
finite objects $f: M\to N$ induces a morphism $f^*: N^*\to M^*$ so that $\ev_{M^*}(f^*\ot M)=
\ev_{N^*}(N^*\ot f)$.
A finite object $M$ is {\em faithfully projective} if $\tilde\ev\w: \w [M,I] \ot_{[M,M]}M \to I$
induced by $\ev_{[M,I]}$ is an isomorphism.

On inner hom-objects there is an associative
pre-multiplication $\Fi_{M, Y, Z}: [Y, Z]\ot [M, Y]\to [M, Z]$ and a unital morphism
$\eta_{[Y, Y]}: I\to [Y, Y]$ such that \vspace{-0,2cm}
$$\Fi_{M, Y, Y}(\eta_{[Y, Y]}\ot [M, Y])=\lambda_{[M, Y]}\qquad\textnormal{and}\qquad
\Fi_{M, M, Y}([M, Y]\ot \eta_{[M, M]})=\rho_{[M, Y]}$$
where $\lambda$ and $\rho$ are unity constraints. The morphisms $\Fi_{M, Y, Z}$ and
$\eta_{[Y, Y]}$ are given via the universal properties of $ev_{[M, Z]}$ and $ev_{[Y, Y]}$,
respectively, by $\ev_{[M, Z]}(\Fi_{M, Y, Z} \ot M)=\ev_{[Y, Z]}([Y, Z]\ot\ev_{[M, Y]})$ and
$\ev_{[Y, Y]}(\eta_{[Y, Y]} \ot Y)=\lambda_Y$. Consequently, for each $M\in\E$, the object
$[M, M]$ is equipped with an algebra structure via $\Fi_{M, M, M}$ and $\eta_{[M, M]}$.

For a consequence of the adjunction isomorphisms $\E(B\ot M, M) \cong \E(B, [M, M])$ one
has that an object $M\in\E$ is a left $B$-module if and only if there is an algebra morphism
$\teta: B \to [M, M]$ in $\E$. If $\mu:B \otimes M \rightarrow M$ is the structure morphism, then
$\theta$ is the unique morphism such that
\begin{equation} \eqlabel{teta}
ev_{[M,M]}(\theta \otimes M)=\mu.
\end{equation}


Suppose that $\Phi_{B,B}$ is {\em symmetric}, that is, $\Phi_{B,B}=\Phi_{B,B}^{-1}$. Then if $M$ is a
left $B$-module, so are $M^*$ and $[M, M]$ where the action of the latter is given by: \vspace{-0,7cm}
\begin{center}
\begin{tabular}{p{4.8cm}p{2cm}p{5.8cm}}
\begin{eqnarray}  \eqlabel{H-mod-end-eq}
\scalebox{0.9}[0.9]{
\gbeg{3}{5}
\got{1}{B} \got{3}{[M, M]} \gnl
\gcn{1}{1}{1}{3} \gvac{1} \gcl{1} \gnl
\gvac{1} \glm \gnl
\gvac{2}\gcl{1}\gnl
\gvac{2}\gob{1}{[M, M]}
\gend}=\scalebox{0.9}[0.9]{
\gbeg{6}{8}
\got{2}{B}\got{1}{[M, M]} \gnl
\gcmu  \gcl{2} \gvac{1} \gnl
\gcl{1} \gmp{+} \gnl
\gcl{1} \gbr  \gnl
\gbmp{\teta} \gcl{1} \gbmp{\teta}  \gnl
\gcn{1}{1}{1}{0} \gmu \gnl
\hspace{-0,22cm}\gwmu{3} \gnl
\gvac{1} \gob{1}{[M, M]}
\gend}
\end{eqnarray}  & &
\begin{eqnarray} \eqlabel{inner H-mod eval}  \textnormal{i.e.}\quad
\scalebox{0.9}[0.9]{
\gbeg{5}{6}
\got{1}{B} \got{3}{[M, M]} \got{1}{M} \gnl
\gcn{1}{1}{1}{3} \gvac{1} \gcl{1} \gvac{1} \gcl{2} \gnl
\gvac{1} \glm \gnl
\gvac{2} \gcn{1}{1}{1}{3}\gcn{1}{1}{3}{1} \gnl
\gvac{3} \gmp{\ev} \gnl
\gvac{3} \gob{1}{M}
\gend}=\scalebox{0.9}[0.9]{
\gbeg{6}{9}
\got{2}{B}\got{1}{[M, M]}\got{3}{M} \gnl
\gcmu  \gcl{2}\gvac{1} \gcl{2}\gnl
\gcl{1}\gmp{+} \gvac{2}  \gnl
\gcl{1} \gbr \gcn{1}{1}{3}{1} \gnl
\gcl{1} \gcl{1} \glm  \gnl
\gcl{1} \gcn{1}{1}{1}{3}\gcn{1}{1}{3}{1} \gnl
\gcn{1}{1}{1}{3} \gvac{1} \gmp{\ev} \gnl
\gvac{1} \glm \gvac{1} \gnl
\gvac{2} \gob{1}{M.}
\gend}
\end{eqnarray}
\end{tabular}
\end{center} \vspace{-0,5cm}
A $B$-action on $[M, M]$ is said to be {\em strongly inner} if there is an algebra morphism
$\theta: B\to [M, M]$ such that the identity \equref{H-mod-end-eq} holds. Then the following is obvious:

\begin{lma}  \lelabel{strongly inner}
Let $M$ be an object in $\E$.
\begin{enumerate}
\item A $B$-action on $M$ induces a strongly inner action on $[M,M]$.
\item If $[M,M]$ is a $B$-module algebra and the $B$-action on $[M,M]$ is strongly inner, then
\equref{teta} defines a $B$-action on $M$ which induces the $B$-action on $[M,M]$.
\end{enumerate}
\end{lma}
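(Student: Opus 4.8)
The plan is to unwind the adjunction isomorphism $\E(B\ot M,M)\iso\E(B,[M,M])$ in both directions, checking compatibility with algebra/module structures. Recall from \equref{teta} that a left $B$-action $\mu:B\ot M\to M$ corresponds bijectively to the morphism $\teta:B\to[M,M]$ with $ev_{[M,M]}(\teta\ot M)=\mu$, and that this $\teta$ is an algebra morphism precisely when $\mu$ is associative and unital (this is the standard fact quoted just before the lemma). So the two halves of the lemma are really the two directions of this correspondence, one starting from $\mu$ and producing the strongly inner action via formula \equref{H-mod-end-eq}, the other starting from a strongly inner action and recovering $\mu$.

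For part (1), I would start with the $B$-action $\mu$ on $M$, form $\teta$ as above, and \emph{define} the action of $B$ on $[M,M]$ by the right-hand side of \equref{H-mod-end-eq}. Two things must be verified: that this is genuinely a $B$-module structure (associativity and unitality), and that it makes $[M,M]$ a $B$-module \emph{algebra}, i.e. that the multiplication $\Fi_{M,M,M}$ and unit $\eta_{[M,M]}$ of $[M,M]$ are $B$-linear. Both are diagrammatic computations: one pushes the action past the evaluation using \equref{inner H-mod eval}, uses that $\teta$ is an algebra map, that $\Delta_B$ is coassociative and counital, that $S$ is a convolution inverse and an anti-coalgebra/anti-algebra morphism, and crucially uses $\Phi_{B,B}=\Phi_{B,B}^{-1}$ to slide the two copies of $B$ coming from $\Delta_B$ past each other when they meet. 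The symmetricity is exactly what is needed to make the ``sandwich'' $b\mapsto \teta(b_{(1)})(-)\teta(S(b_{(2)}))$ behave like an honest action; without it the relevant strands cannot be uncrossed.

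For part (2), suppose $[M,M]$ is a $B$-module algebra whose action is strongly inner, witnessed by an algebra morphism $\teta:B\to[M,M]$ satisfying \equref{H-mod-end-eq}. Then I would \emph{define} $\mu:B\ot M\to M$ by \equref{teta}, i.e. $\mu=ev_{[M,M]}(\teta\ot M)$; since $\teta$ is an algebra morphism, the quoted fact gives that $\mu$ is a $B$-action on $M$. It remains to check that feeding this $\mu$ back into the construction of part (1) reproduces the given strongly inner action on $[M,M]$ — but that is immediate, because the right-hand side of \equref{H-mod-end-eq} is written entirely in terms of $\teta$, which is the same $\teta$ we started from. So there is essentially nothing to prove beyond citing \equref{teta} and the adjunction; this is why the authors call it ``obvious''.

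The only genuine obstacle is the $B$-module-algebra verification in part (1): showing $\Fi_{M,M,M}$ is $B$-linear for the action \equref{H-mod-end-eq}. I expect this to be the one place where several structural identities (coassociativity, the antipode axiom in the form $\nabla(S\ot\Id)\Delta=\eta\varepsilon$, naturality of $\Phi$, and the symmetricity $\Phi_{B,B}=\Phi_{B,B}^{-1}$) all get used together, and where a careful braided-diagram manipulation is required rather than a one-line argument. Everything else — unitality, the recovery statement in (2), and the bookkeeping that \equref{teta} is a bijection — is formal.
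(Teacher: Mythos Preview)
Your proposal is correct in spirit and matches the paper's (non-)proof: the paper writes ``Then the following is obvious'' and gives no argument, for precisely the reasons you identify at the end --- both parts reduce to the adjunction bijection between $B$-actions on $M$ and algebra morphisms $B\to[M,M]$, together with the tautology that formula \equref{H-mod-end-eq} is written entirely in terms of $\teta$.

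However, you are doing more than the lemma asks in part (1). You propose to verify that \equref{H-mod-end-eq} makes $[M,M]$ into a $B$-module \emph{algebra}; but the lemma does not claim this. The statement that $[M,M]$ is a $B$-module via \equref{H-mod-end-eq} is asserted by the paper in the paragraph \emph{preceding} the lemma (under the hypothesis $\Phi_{B,B}$ symmetric), not inside it; and ``strongly inner'' as defined there only requires the existence of an algebra morphism $\teta$ satisfying \equref{H-mod-end-eq}, not that $[M,M]$ be a module algebra. So part (1) is literally just the observation that the $\teta$ coming from the adjunction is an algebra morphism and satisfies \equref{H-mod-end-eq} by construction --- nothing to check. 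The module-algebra property only enters as a \emph{hypothesis} in part (2), where it is not proved either. Your longer computation (compatibility of $\Fi_{M,M,M}$ with the action, using symmetricity and the antipode axiom) is a reasonable thing to want to know, but it belongs to the surrounding discussion rather than to this lemma.
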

\par\smallskip

Suppose now that $M$ and $N$ are faithfully projective and that the $B$-actions on $[M,M]$ and $[N,N]$
are strongly inner. Let $\teta_M$ and $\teta_N$ 
denote the corresponding algebra morphisms. Recall that the $B$-action $\mu_{M\ot N}$ on $M\ot N$ is
given by $\mu_{M\ot N}= (\mu_M\ot\mu_N)(B\ot\Phi_{B,M}\ot N)(\Delta_B\ot M\ot N)$, and let $\teta_{M\ot N}$
be the corresponding algebra morphism. Let
\begin{equation} \eqlabel{omega}
\omega: [M,M] \ot [N,N] \to [M\ot N, M\ot N]
\end{equation}
denote the algebra isomorphism proved in \cite[Proposition 2.3]{VZ1}, it is given by
$ev_{[M\ot N, M\ot N]}(\omega\ot M\ot N)=(ev_{[M,M]}\ot ev_{[N,N]})([M,M] \ot \Phi_{[N,N],M}\ot N)$.
Then by \equref{teta}, naturality and the definition of $\omega$ we have:
$ev_{[M\ot N, M\ot N]}(\teta_{M \ot N}\ot M\ot N)=(ev_{[M,M]}\ot ev_{[N,N]})([M,M] \ot
\Phi_{[N,N],M}\ot N)((\teta_M\ot\teta_N)\Delta_B\ot M\ot N)=ev_{[M\ot N, M\ot N]}(\omega\ot M\ot N)
((\teta_M\ot\teta_N)\Delta_B\ot M\ot N)$, which by the universal property of $ev_{[M\ot N, M\ot N]}$
yields:
\begin{equation} \eqlabel{teta-tensor}
\teta_{M\ot N}=\omega(\teta_M\ot\teta_N)\Delta_B.
\end{equation}
\par\smallskip

If $M$ is a left $B$-comodule, then so is $[M, M]$ with the coaction given by:
\begin{eqnarray} \eqlabel{H-comod-end-eq}
\gbeg{4}{5}
\got{1}{} \got{1}{[M,M]} \got{3}{M} \gnl
\glcm \gvac{1} \gcl{1} \gnl
\gcl{2} \gcn{1}{1}{1}{3}\gcn{1}{1}{3}{1} \gnl
\gvac{2}  \gmp{\ev} \gnl
\gob{1}{B} \gvac{1} \gob{1}{M}
\gend=\gbeg{3}{10}
\got{1}{[M,M]} \gvac{2} \got{1}{M} \gnl
\gcl{2} \gvac{1} \glcm \gnl
\gvac{1} \gcn{1}{1}{3}{1} \gvac{1} \gcl{1} \gnl
\gbr \gvac{1} \gcl{1} \gnl
\gmp{-} \gcn{1}{1}{1}{3}\gcn{1}{1}{3}{1} \gnl
\gcl{2} \gvac{1} \gmp{\ev} \gnl
\gvac{1} \glcm \gnl
\gbr \gcl{2} \gnl
\gmu \gnl
\gob{2}{B} \gob{1}{M}
\gend
\end{eqnarray}
if $\Phi_{B,B}$ is symmetric. Note that
\equref{inner H-mod eval} and \equref{H-comod-end-eq} generalize \equref{H-mod end} and \equref{H-comod end}.

The Yetter-Drinfel'd compatibility conditions \equref{YDll-1} and \equref{YDll-2} generalized
to any Hopf algebra $B$ in $\E$ in braided diagrams read as: \vspace{-0,8cm}
\begin{center}
\begin{tabular}{p{6cm}p{2cm}p{6cm}}
\begin{equation} \eqlabel{left YD}
\gbeg{4}{8}
\got{2}{B} \got{1}{M} \gnl
\gcmu \gcl{1} \gnl
\gcl{1} \gbr \gnl
\glm \gcl{2} \gnl
\glcm \gnl
\gcl{1} \gbr \gnl
\gmu \gcl{1} \gnl
\gob{2}{B} \gob{1}{M}
\gend=
\gbeg{4}{5}
\got{2}{B} \got{3}{M} \gnl
\gcmu \glcm \gnl
\gcl{1} \gbr \gcl{1} \gnl
\gmu \glm \gnl
\gob{2}{B} \gob{3}{M}
\gend
\quad
\end{equation}
& & \vspace{-0,6cm}
\begin{equation} \eqlabel{left YD-oth} \hspace{-0,2cm}
\gbeg{4}{8}
\got{1}{B} \got{5}{M} \gnl
\gcn{2}{2}{1}{5} \gvac{1} \gcl{2} \gnl \gnl
\gvac{2} \glm \gnl
\gvac{2} \glcm \gnl
\gcn{2}{2}{5}{1} \gvac{1} \gcl{2} \gnl \gnl
\gob{1}{B} \gob{5}{M}
\gend=
\gbeg{6}{9}
\gvac{1} \got{2}{B} \got{4}{M} \gnl
\gvac{1} \gcmu \gcn{1}{1}{4}{4} \gnl
\gvac{1} \gcn{1}{1}{1}{0} \hspace{-0,21cm} \gcmu \glcm \gnl
\gvac{1} \gcl{3} \gbr \gcl{1} \gcl{3} \gnl
\gvac{2}  \gmp{+} \gbr \gnl
\gvac{1} \gvac{1} \gbr \gcl{1} \gnl
\gvac{1} \gcn{1}{1}{1}{2} \gmu \glm \gnl
\gvac{2} \hspace{-0,21cm} \gmu \gcn{1}{1}{4}{4} \gnl
\gvac{2} \gob{2}{B} \gob{4}{M}
\gend
\end{equation}
\end{tabular}
\end{center} \vspace{-0,5cm}
One may easily show that equipped with the regular action and with the adjoint coaction: \vspace{-0,3cm}
$$
\scalebox{0.88}{\gbeg{3}{5}
\gvac{1} \got{3}{B} \gnl
\gvac{2} \gcl{1} \gnl
\gvac{1} \glcm \gnl
\gcn{1}{1}{3}{1} \gvac{1} \gcl{1} \gnl
\gob{1}{B} \gob{3}{B}
\gend}=\scalebox{0.88}{
\gbeg{6}{6}
\gvac{1} \got{3}{B} \gnl
\gvac{1} \gwcm{3} \gnl
\gwcm{3} \gmp{+} \gnl
\gcl{1} \gvac{1} \gbr \gnl
\gwmu{3} \gcl{1} \gnl
\gvac{1} \gob{1}{B} \gob{3}{B}
\gend}
$$
$B$ itself is an object in ${}_B ^B\YD(\E)$ if $\Phi_{B, B}$ is symmetric.

Throughout we will write {\em $\Phi_{B, M}$ is symmetric for all $M\in {}_B ^B\YD(\E)$} by abuse of
notation, when strictly speaking we should say {\em for all $M\in\E$}. Indeed, through the
forgetful functor $\U: {}_B ^B\YD(\E) \to\E$ every $M\in {}_B ^B\YD(\E)$ is an object in $\E$, and
every $N\in\E$ can be equipped with trivial $B$-(co)module structures to make an object in ${}_B ^B\YD(\E)$.
We proved in \cite{Femic1} that if $\Phi_{B, M}$ is symmetric for $M\in {}_B ^B\YD(\E)$, then
the expressions \equref{left YD} and \equref{left YD-oth} are equivalent and ${}_B ^B\YD(\E)$ is a braided
monoidal category. Its tensor product we denote by $\crta\ot$. The braiding $\Psi: M\crta\ot N\to N\crta\ot M$
in ${}_B ^B\YD(\E)$ and its inverse 
are given by: \vspace{-0,2cm}
\begin{equation} \eqlabel{br in YD}
\Psi_{M, N}=\scalebox{0.9}[0.9]{
\gbeg{4}{7}
\gvac{1} \got{1}{M} \got{1}{N} \gnl
\gvac{1} \gibr \gnl
\glcm \gcl{3} \gnl
\gmp{-} \gcl{1} \gnl
\gibr \gnl
\gcl{1} \glm \gnl
\gob{1}{N} \gvac{1} \gob{1}{M}
\gend} \qquad\textnormal{and}\qquad
\Psi_{M, N}^{-1}=
\gbeg{3}{5}
\got{1}{} \got{1}{N} \got{1}{M} \gnl
\glcm \gcl{1} \gnl
\gcl{1} \gbr \gnl
\glm \gcl{1} \gnl
\gob{1}{} \gob{1}{M} \gob{1}{N}
\gend
\end{equation}
Note that these generalize \equref{braidingD}. The opposite algebra of any algebra $A$ in ${}_B ^B\YD(\E)$
we denote by $\crta A$. When $\Phi_{B, A}$ is symmetric it can be shown that $\crta A$
is an algebra in ${}_B ^B\YD(\E)$ with the same
$B$-structures as $A$. For $M\in {}_B ^B\YD(\E)$ one has that $[M,M]\in {}_B ^B\YD(\E)$
with the structures \equref{inner H-mod eval} and \equref{H-comod-end-eq}.
\par\medskip

We finally outline the basic facts about the Brauer group $\Br(\E)$ of $\E$ from \cite{VZ1}. 
An algebra $A\in\C$ is called an {\em Azumaya algebra} if
the functors $A\ot -:\C\to {}_{A\ot A^{op}}\C$ and $-\ot A:\C\to \C_{A^{op}\ot A}$
establish $\C$-module equivalences of categories. For a characterization of an Azumaya algebra
in terms of the morphisms which generalize \equref{map F} and \equref{map G} see \cite[Theorem 3.1]{VZ1}
and \cite[Theorem 5.1]{Femic2}.
For any faithfully projective object $M\in\E$ one has that $[M,M]$ is an Azumaya algebra in $\E$.
If $A$ is an Azumaya algebra in $\E$, then so is $A^{op}$. If $A$ and $B$ are Azumaya algebras in
$\E$, then so is $A\ot B$. Two Azumaya algebras $A$ and $B$ are said to be {\em Brauer equivalent} if
there are faithfully projective objects $M,N\in\E$ so that $A\ot [M,M]\iso B\ot [N,N]$ as algebras
in $\E$. The equivalence class of $[M,M]$ for a faithfully projective object
$M\in\E$ is the unit in $\Br(\E)$ and the inverse of the class of an Azumaya algebra $A$ is the class
of its opposite algebra $A^{op}$. The Brauer group of the category of left-left Yetter-Drinfel'd modules
over $B$ in $\E$ is denoted by $\BQ(\E; B):=\Br({}_B ^B\YD(\E))$ and is called the {\em quantum Brauer group}.
In \cite{Femic1} we proved that ${}_B ^B\YD(\E)$ is braided monoidally isomorphic to the category
${}_{D(B)}\E$ of left modules over the Drinfel'd double $D(B)=(B^{op})^*\bowtie B$ in $\E$ when $B$ is
finite in $\E$ and if 
$\Phi_{B, M}$ is symmetric for all $M\in {}_B ^B\YD(\E)$. 
As a matter of fact, the latter symmetricity condition on $\Phi$ is
unavoidable in order to define any monoidal functor between ${}_{D(B)}\E$ and ${}_B ^B\YD(\E)$, given
that a left $B^*$-module structure gives rise to a left $B$-comodule structure and vice versa in any
such a functor. An isomorphism functor \hspace{-0,9cm} $
\bfig
\putmorphism(200,30)(1,0)[\F:\q {}_{D(B)}\E` {}_B^B\YD(\E): \G`]{800}0a
\putmorphism(110,50)(1,0)[\phantom{{}_H\YD(\C)^{H^{op}}: \G}`\phantom{\F: {}_{(H^{op})^*\bowtie H}\C}` ]{900}1a
\putmorphism(100,10)(1,0)[\phantom{{}_H\YD(\C)^{H^{op}}: \G}`\phantom{\F: {}_{(H^{op})^*\bowtie H}\C}` ]{880}{-1}b
\efig
$ is given by: \vspace{-1,2cm}
\begin{center}
\begin{tabular}{p{4.8cm}p{2cm}p{5.8cm}}
\begin{eqnarray} \eqlabel{Hcomod}
\scalebox{0.9}[0.9]{
\gbeg{3}{4}
\gvac{1} \got{3}{\F(M)} \gnl
\gvac{1} \glcm \gnl
\gcn{1}{1}{3}{1} \gvac{1} \gcl{1} \gnl
\gob{1}{B} \gob{3}{\F(M)}
\gend} = \scalebox{0.9}[0.9]{
\gbeg{7}{5}
\got{1}{} \got{1}{} \got{1}{M} \gnl
\gdb \gcl{2} \gnl
\gcl{1} \gmp{+\hspace{-0,08cm}^*} \gnl
\gcl{1} \glm \gnl
\gob{1}{B} \gob{3}{M}
\gend}
\end{eqnarray}  & &
\begin{eqnarray} \eqlabel{H*mod}
\scalebox{0.9}[0.9]{
\gbeg{3}{4}
\got{1}{B^*} \gvac{1} \got{1}{\G(N)} \gnl
\gcn{1}{1}{1}{3} \gvac{1} \gcl{1} \gnl
\gvac{1} \glm \gnl
\gvac{2} \gob{1}{\G(N)}
\gend} = \scalebox{0.9}[0.9]{
\gbeg{2}{5}
\got{1}{B^*} \got{3}{N} \gnl
\gcl{1} \glcm \gnl
\gcl{1} \gmp{-} \gcl{2} \gnl
\gev \gnl
\gob{5}{N}
\gend}
\end{eqnarray}
\end{tabular}
\end{center} \vspace{-0,5cm}
with $M\in {}_{D(B)}\E$ and $N\in {}_B^B\YD(\E)$.
Consequently, we have isomorphic Brauer groups: $\BQ(\E; B)\iso\Br({}_{D(B)}\E)=\BM(\E; D(B))$.
As a consequence of \leref{strongly inner} we have:

\begin{cor} \colabel{D(H)-strongly inner}
Suppose that $[M,M]$ is a $D(B)$-Azumaya algebra. It is $[[M,M]]=1$ in $\BM(\E; D(B))$ if and only if
the $D(B)$-action on $[M,M]$ is strongly inner.
\end{cor}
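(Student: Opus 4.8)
The statement is an ``if and only if'' about when the class of a $D(B)$-Azumaya algebra of the form $[M,M]$ is trivial in $\BM(\E; D(B))=\Br({}_{D(B)}\E)$. The plan is to pass back and forth across the braided monoidal isomorphism ${}_B ^B\YD(\E)\iso {}_{D(B)}\E$ of \equref{Hcomod}--\equref{H*mod} and to apply \leref{strongly inner} componentwise: once for the $B$-module structure (encoded by $\teta$) and once for the $B^*$-module structure (equivalently, the $B$-comodule structure, encoded dually via $+\hspace{-0,08cm}^*$), since a $D(B)=(B^{op})^*\bowtie B$-action decomposes into compatible $B$- and $B^*$-actions.

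First I would recall the general fact, valid in the Brauer group of any closed braided monoidal category, that the class of an Azumaya algebra $A$ is trivial precisely when $A\iso[M,M]$ as algebras for some faithfully projective $M$ (this is how the unit of $\Br$ is defined in the excerpt). So for the algebra $[M,M]$ with its given $D(B)$-action, triviality in $\BM(\E;D(B))$ means exactly that there is an isomorphism of $D(B)$-module algebras $[M,M]\iso[N,N]$ for some faithfully projective $N\in {}_{D(B)}\E$. The forward direction is then almost immediate: if $[M,M]$ is trivial, transport $N$ along the isomorphism; in particular $[M,M]$ itself carries a $D(B)$-module structure of the form $[N,N]$, i.e.\ coming from a $D(B)$-action on an object, and \leref{strongly inner}(1) (applied in the category ${}_{D(B)}\E$, or rather its $B$- and $B^*$-components via the isomorphism with ${}_B ^B\YD(\E)$) says such an action is strongly inner. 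Conversely, if the $D(B)$-action on $[M,M]$ is strongly inner, then by \leref{strongly inner}(2) there is a $D(B)$-action on $M$ itself (built from the algebra morphism $\teta: D(B)\to[M,M]$ via \equref{teta}) inducing the given one on $[M,M]$; then $M$ becomes a faithfully projective object of ${}_{D(B)}\E$ and $[M,M]$ is its endomorphism algebra in that category, hence represents the unit of $\Br({}_{D(B)}\E)$.

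The one genuine point requiring care — and the main obstacle — is the passage between a strongly inner $D(B)$-action and the pair of a strongly inner $B$-action and a strongly inner $B^*$-action, because \leref{strongly inner} as stated is phrased for a single Hopf algebra $B$ in $\E$, whereas here the relevant object is $D(B)$. There are two clean ways to handle this, and I would take the second. (i) Observe that $D(B)$ is itself a Hopf algebra in $\E$ (it is finite since $B$ is), so \leref{strongly inner} applies verbatim with $B$ replaced by $D(B)$, provided $\Phi_{D(B),-}$ is symmetric on the relevant objects — which follows from the standing symmetricity hypothesis on $\Phi_{B,-}$ together with the structure of $D(B)$ as built from $B$ and $B^*$. (ii) Alternatively, and more in the spirit of the excerpt, use the braided monoidal isomorphism ${}_{D(B)}\E\iso {}_B ^B\YD(\E)$ so that the statement becomes one about ${}_B ^B\YD(\E)$, where $[M,M]$ is Azumaya iff it is $[N,N]$ for faithfully projective $N$, and the Yetter--Drinfel'd module structure on $M$ decomposes into a $B$-action and a $B$-coaction satisfying \equref{left YD}; one then applies \leref{strongly inner}(1) and (2) to the $B$-action and, dually (using that $M^*=[M,I]$ and the inner-hom formalism), to the $B$-coaction, and checks the Yetter--Drinfel'd compatibility is preserved — this last compatibility check is the only non-formal computation, and it is short given \equref{left YD}, \equref{inner H-mod eval}, \equref{H-comod-end-eq}. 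Either route reduces \coref{D(H)-strongly inner} to \leref{strongly inner} plus the definition of the unit in $\Br$; I expect the write-up to be only a few lines, essentially ``apply \leref{strongly inner} to $D(B)$ in place of $B$ and unwind the definition of Brauer-triviality.''
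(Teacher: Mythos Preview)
Your core argument is correct and matches the paper's approach exactly: triviality in $\BM(\E;D(B))$ means $[M,M]\iso[N,N]$ as $D(B)$-module algebras for some $D(B)$-module $N$, and then one transports the strongly inner structure; conversely, \leref{strongly inner}(2) applied to $D(B)$ gives a $D(B)$-action on $M$ itself, making $[M,M]$ represent the unit.

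However, you significantly overcomplicate what you call ``the one genuine point requiring care.'' The paper simply applies \leref{strongly inner} verbatim with $D(B)$ in the role of $B$ --- this is your route (i), and it is the entire proof. There is no need to decompose into $B$- and $B^*$-components or to pass through ${}_B^B\YD(\E)$ as in your route (ii); the Yetter--Drinfel'd compatibility check you anticipate never arises. The paper's forward direction is even more explicit than your ``transport'' phrasing: if $\Fi:[M,M]\to[N,N]$ is the $D(B)$-module algebra isomorphism and $\theta:D(B)\to[N,N]$ witnesses the strongly inner action on $[N,N]$, then $\Fi^{-1}\theta:D(B)\to[M,M]$ is the algebra morphism making the $D(B)$-action on $[M,M]$ strongly inner. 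The converse is literally one line: ``follows by \leref{strongly inner}, 2).'' Your instinct that the write-up should be ``only a few lines'' is right; just commit to route (i) and drop the rest.
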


\begin{proof}
The condition $[[M,M]]=1$ in $\BM(\E; D(B))$ means that there is a $D(B)$-module $N$ so that the
$D(B)$-action on $[N,N]$ is strongly inner (with an algebra morphism $\theta: D(B)\to [N,N]$), and that
$[M,M]\iso [N,N]$ as $D(B)$-module algebras (with a $D(B)$-module algebra isomorphism $\Fi: [M,M]\to [N,N]$).
Then the algebra morphism $\Fi^{-1}\theta$ makes the $D(B)$-action on $[M,M]$ strongly inner. The converse
follows by \leref{strongly inner}, 2).
\qed\end{proof}

We recall that the {\em right center} of an algebra $A\in\E$ is an object ${}^A A$ together with a
(mono)morphism $i_r: {}^A A \to A$ in $\E$ such that $\nabla_A(i_r\ot A)=\nabla_A\Phi_{A,A}(i_r\ot A)$,
and if $T$ with $t: T\to A$ is another pair fulfilling $\nabla_A(t\ot A)=\nabla_A\Phi_{A,A}(t\ot A)$,
then there is a unique morphism $\crta t: T\to {}^A A$ such that $i_r\crta t=t$. Similarly one defines
the left center of $A$ (compare \cite[Definitions 3.1 and 3.2, identities (8) and (9), Corollary 3.4]{Femic2}).
An Azumaya algebra is left and right central, meaning that its left and right centers are trivial
(isomorphic to $I$), \cite[Proposition 5.3]{Femic2}.



\section{Group-likes of a braided Hopf algebra}

In this section $B$ denotes a finite Hopf algebra in a braided monoidal category $\E$.
We recall that an algebra morphism $B^*\to I$, or equivalently, a coalgebra morphism $I\to B$ in
$\E$ is called a {\em group-like on $B$}, \cite{Tak2}.
Note that any group-like $g$ is convolution invertible, its convolution inverse being $Sg$ or $gS$
depending on whether we consider $g$ as a coalgebra or an algebra morphism, respectively.
The group of group-likes on $B$ will be denoted by $G(B)$. The set of algebra morphisms $B\to I$
we will denote by $\Alg(B,I)$ and the set of coalgebra morphisms $I\to B$ by $\Coalg(I, B)$.

\begin{lma} \lelabel{mods and grouplks}
Let $M\in\E$ be faithfully projective and let $\sigma, \tau: B\to [M,M]$ define two $B$-module structures
on $M$. The respective $B$-modules we denote by $M_{\sigma}$ and $M_{\tau}$. The induced
$B$-module structures on $[M_{\sigma},M_{\sigma}]$ and $[M_{\tau},M_{\tau}]$ coincide if and only if
$\tau^{-1}*\sigma$ factors through a group-like on $B^*$.
\end{lma}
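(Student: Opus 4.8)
The plan is to work with the algebra morphisms $\teta_{[M_\sigma,M_\sigma]}, \teta_{[M_\tau,M_\tau]}: B\to [[M,M],[M,M]]$ induced by the two $B$-actions on $[M,M]$ via \equref{teta}, and to translate the equality of these actions into an identity about $\sigma$ and $\tau$ that forces the convolution quotient $\tau^{-1}*\sigma$ to land in $G(B^*)$. Since $M$ is faithfully projective, $[M,M]$ is itself faithfully projective (indeed Azumaya), so by the first part of \leref{strongly inner} each $B$-action on $M$ induces a \emph{strongly inner} action on $[M,M]$, and the induced morphisms $\teta_{[M_\sigma,M_\sigma]}$, $\teta_{[M_\tau,M_\tau]}$ are the ones built from $\sigma$, $\tau$ by formula \equref{H-mod-end-eq} (read as a formula for the algebra map $B\to[[M,M],[M,M]]$, not just the action). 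The key point is that \equref{H-mod-end-eq} expresses $\teta_{[M_\sigma,M_\sigma]}$ as an explicit braided-diagram word in $\Delta_B$, $S_B$, $\sigma$ and the multiplication of $[M,M]$: schematically, it is $b \mapsto \bigl(f \mapsto \sigma(b_{(1)})\, f\, \sigma(S_B(b_{(2)}))\bigr)$ where juxtaposition is the product in $[M,M]$ suitably braided.

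First I would write both $\teta_{[M_\sigma,M_\sigma]}$ and $\teta_{[M_\tau,M_\tau]}$ in this conjugation-by-$\sigma$ / conjugation-by-$\tau$ form. Setting $\gamma := \tau^{-1}*\sigma : B\to [M,M]$ (convolution in $\Hom_\E(B,[M,M])$, which makes sense since $\tau$ is convolution-invertible with inverse $\tau\circ S_B$ by the usual Hopf-algebra argument), one has $\sigma = \tau*\gamma$. Substituting and using the coassociativity of $\Delta_B$, the antipode axioms for $B$, and associativity of the product in $[M,M]$, the equality $\teta_{[M_\sigma,M_\sigma]} = \teta_{[M_\tau,M_\tau]}$ reduces — after cancelling the $\tau$-conjugation from both sides, which is legitimate because $\tau$ is invertible — to the statement that conjugation by $\gamma$ is the identity automorphism of $[M,M]$, i.e. $\gamma(b_{(1)})\, f\, (S\gamma)(b_{(2)}) = \varepsilon_B(b) f$ for all $f$, equivalently $\gamma(b_{(1)})\, f = f\, \gamma(b_{(2)})$ after moving the inverse across. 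In braided-diagram terms this says exactly that the image of $\gamma$ is central in $[M,M]$ in the appropriate braided sense. Then I would invoke that $[M,M]$ is an Azumaya algebra (because $M$ is faithfully projective) and hence left and right central with trivial center $\cong I$, as recalled just before Section 3 from \cite[Proposition 5.3]{Femic2}; therefore $\gamma$ factors as $B \xrightarrow{\ g\ } I \xrightarrow{\ \eta_{[M,M]}\ } [M,M]$ for a unique morphism $g: B\to I$.

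It then remains to check that this $g$ is a group-like on $B^*$, i.e. an algebra morphism $B\to I$ (equivalently a coalgebra morphism $I\to B^*$). That $g$ is \emph{counital} ($g\circ\eta_B = \id_I$) follows by evaluating the factorization at the unit and using that $\gamma(1_B) = \tau^{-1}(1_B)*\sigma(1_B) = \eta_{[M,M]}$ since $\sigma,\tau$ are algebra morphisms. That $g$ is \emph{multiplicative} ($g\circ\nabla_B = g\ot g$) I would get from the fact that $\gamma = \tau^{-1}*\sigma$ is a convolution product of two algebra \emph{anti}-/-morphisms landing in a braided-commutative target so it is again "multiplicative up to the braiding on $I$", which is trivial; more carefully, $\gamma$ being an algebra morphism into the (now commutative, being $\cong I$) center forces $g$ to be an algebra morphism $B\to I$, which is precisely the definition of a group-like on $B^*$. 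Conversely, if $\tau^{-1}*\sigma = \eta_{[M,M]}\circ g$ for a group-like $g$ on $B^*$, then $\sigma = \tau * (\eta\circ g)$ and conjugation-by-$\sigma$ equals conjugation-by-$\tau$ on $[M,M]$ because $\eta\circ g$ is central and convolution-invertible, so the two induced actions on $[M,M]$ agree; this direction is the easy one and I would dispatch it in a line.

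The main obstacle I anticipate is bookkeeping the braidings correctly when cancelling the $\tau$-conjugation and when identifying "conjugation by $\gamma$ is trivial" with "$\gamma$ is central": all of this is done under the hypothesis that $\Phi_{B,B}$ is symmetric (needed already for \equref{H-mod-end-eq} to define a module structure and for $[M,M]\in {}_B^B\YD(\E)$), and one must make sure the various instances of $\Phi$ versus $\Phi^{-1}$ appearing in the diagrammatic conjugation formula really do collapse using symmetry plus the antipode axioms, rather than leaving a residual braiding that would obstruct the factorization through the center. I expect this to be a routine but careful braided-diagram manipulation, with the genuinely substantive input being the triviality of the center of the Azumaya algebra $[M,M]$.
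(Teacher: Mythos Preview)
Your proposal is correct and follows essentially the same route as the paper: both reduce the equality of the two strongly-inner actions on $[M,M]$ to the statement that $\chi=\tau^{-1}*\sigma$ has central image in the Azumaya algebra $[M,M]$, invoke triviality of the center to factor $\chi$ through $I$, and then check the resulting $B\to I$ is an algebra morphism (the converse being a direct computation). One small point to tighten: your justification that $g$ is multiplicative is phrased circularly (``$\gamma$ being an algebra morphism into the center forces $g$ to be an algebra morphism''); the clean argument, which is what the paper's one-line ``being $\sigma$ and $\tau$ algebra morphisms, so becomes $\bar\chi$'' is implicitly using, is that centrality of $\chi(a)$ lets you commute it past $\tau(Sb_{(1)})$ in $\chi(ab)=\tau(Sb_{(1)})\chi(a)\sigma(b_{(2)})$ to obtain $\chi(a)\chi(b)$.
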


\begin{proof}
The fact that $[M_{\sigma},M_{\sigma}]=[M_{\tau},M_{\tau}]$ as $B$-modules means that the right
hand-side of \equref{H-mod-end-eq} with $\theta=\sigma$ equals to the analogous diagram with
$\theta=\tau$. This is equivalent (compose to it: $(\tau S\ot -)(\Delta_B \ot [M,M])$ from above
and then $\nabla_{[M,M]}$ from below) to $\star$ in:
\begin{equation} \eqlabel{tau-S}
\gbeg{2}{6}
\got{1}{B} \got{1}{A} \gnl
\gmp{+} \gcl{1} \gnl
\gbr \gnl
\gcl{1} \gbmp{\tau} \gnl
\gmu \gnl
\gob{2}{A}
\gend=\gbeg{4}{9}
\gvac{1} \got{2}{B} \got{1}{A} \gnl
\gvac{1} \gcmu \gcl{3} \gnl
\gcn{1}{1}{3}{2} \gvac{1} \gcl{1} \gnl
\gcmu \gmp{+} \gnl
\gmp{+} \gcl{1} \gbr \gnl
\gbmp{\tau} \gbmp{\tau} \gcl{1} \gbmp{\tau} \gnl
\gmu \gmu \gnl
\gvac{1} \hspace{-0,22cm} \gwmu{3} \gnl
\gvac{1} \gob{3}{A}
\gend=\gbeg{4}{11}
\got{2}{B} \gvac{1} \got{1}{A} \gnl
\gcmu \gvac{1} \gcl{4} \gnl
\gcl{1} \gcn{1}{1}{1}{2} \gnl
\gcl{1} \gcmu \gnl
\gcl{1} \gcl{1} \gmp{+} \gnl
\gmp{+} \gcl{1} \gbr \gnl
\gbmp{\tau} \gbmp{\tau} \gcl{1} \gbmp{\tau} \gnl
\gcl{1} \gcn{1}{1}{1}{2} \gmu \gnl
\gcl{1} \gvac{1} \hspace{-0,34cm} \gmu \gnl
\gvac{1} \hspace{-0,22cm} \gwmu{3} \gnl
\gvac{2} \gob{1}{A}
\gend\stackrel{\star}{=}
\gbeg{4}{11}
\got{2}{B} \gvac{1} \got{1}{A} \gnl
\gcmu \gvac{1} \gcl{4} \gnl
\gcl{1} \gcn{1}{1}{1}{2} \gnl
\gcl{1} \gcmu \gnl
\gcl{1} \gcl{1} \gmp{+} \gnl
\gmp{+} \gcl{1} \gbr \gnl
\gbmp{\tau} \gbmp{\sigma} \gcl{1} \gbmp{\sigma} \gnl
\gcl{1} \gcn{1}{1}{1}{2} \gmu \gnl
\gcl{1} \gvac{1} \hspace{-0,34cm} \gmu \gnl
\gvac{1} \hspace{-0,22cm} \gwmu{3} \gnl
\gvac{2} \gob{1}{A}
\gend=\gbeg{4}{9}
\gvac{1} \got{2}{B} \got{1}{A} \gnl
\gvac{1} \gcmu \gcl{3} \gnl
\gcn{1}{1}{3}{2} \gvac{1} \gcl{1} \gnl
\gcmu \gmp{+} \gnl
\gmp{+} \gcl{1} \gbr \gnl
\gbmp{\tau} \gbmp{\sigma} \gcl{1} \gbmp{\sigma} \gnl
\gmu \gmu \gnl
\gvac{1} \hspace{-0,22cm} \gwmu{3} \gnl
\gvac{1} \gob{3}{A}
\gend
\end{equation}
where $A=[M,M]$. The morphism $\chi:=\tau S *\sigma: B\to A$ is convolution invertible with the
convolution inverse $\sigma S *\tau$. Note that \equref{tau-S} implies $\star$ in:
$$
\gbeg{2}{5}
\got{1}{B} \got{1}{A} \gnl
\gbr \gnl
\gcl{1} \gbmp{\chi} \gnl
\gmu \gnl
\gob{2}{A}
\gend=
\gbeg{3}{9}
\got{2}{B} \got{1}{A} \gnl
\gcmu \gcl{2} \gnl
\gmp{+} \gbmp{\sigma} \gnl
\gcl{1} \gbr \gnl
\gbr \gcl{2} \gnl
\gcl{1} \gbmp{\tau} \gnl
\gmu \gcn{1}{1}{1}{0} \gnl
\gvac{1} \hspace{-0,34cm} \gmu \gnl
\gvac{1} \gob{2}{A}
\gend\stackrel{\star}{=}
\gbeg{5}{11}
\gvac{3} \got{1}{B} \got{3}{A} \gnl
\gvac{2} \gwcm{3} \gcl{2} \gnl
\gvac{1} \gwcm{3} \gbmp{\sigma} \gnl
\gvac{1} \hspace{-0,36cm} \gcmu \gcn{1}{1}{2}{1} \gcn{1}{1}{2}{1} \gcn{1}{1}{2}{1} \gnl
\gvac{1} \gcl{1} \gcl{1} \gmp{+} \gbr \gnl
\gvac{1} \gmp{+} \gcl{1} \gbr \gcl{3} \gnl
\gvac{1} \gbmp{\tau} \gbmp{\sigma} \gcl{1} \gbmp{\sigma} \gnl
\gvac{1} \gmu \gmu \gnl
\gvac{2} \hspace{-0,2cm} \gwmu{3} \gcn{1}{1}{2}{1} \gnl
\gvac{3} \gwmu{3} \gnl
\gvac{4} \gob{1}{A}
\gend=
\gbeg{7}{10}
\gvac{2} \got{2}{B} \gvac{2} \got{1}{A} \gnl
\gvac{1} \gwcm{4} \gvac{1} \gcl{4} \gnl
\gwcm{3} \gwcm{3} \gnl
\gmp{+} \gvac{1} \gcl{1} \gmp{+} \gvac{1} \gcl{1} \gnl
\gbmp{\tau} \gvac{1} \gbmp{\sigma} \gbmp{\sigma} \gvac{1} \gbmp{\sigma} \gnl
\gwmu{3} \gwmu{3} \gcn{1}{1}{1}{-1} \gnl
\gvac{1} \gcl{1} \gvac{2} \gbr \gnl
\gvac{1} \gcn{1}{1}{1}{2} \gvac{2} \gmu \gnl
\gvac{2} \hspace{-0,2cm} \gwmu{4} \gnl
\gvac{3} \gob{2}{A}
\gend=\gbeg{2}{5}
\got{1}{B} \got{1}{A} \gnl
\gcl{1} \gcl{2} \gnl
\gbmp{\chi} \gnl
\gmu \gnl
\gob{2}{A.}
\gend
$$
Observe that the above identity means that $\chi$ factors through the right center of $A$. But $A$
is Azumaya, so $\chi$ factors through $\crta\chi: B\to I$. Being
$\sigma$ and $\tau$ algebra morphisms, so becomes $\crta\chi$ and we have proved the direct sense.

For the converse, note that we have
\begin{equation} \eqlabel{sigma-tau related}
\gbeg{2}{5}
\got{1}{B} \gnl
\gcl{1} \gnl
\gbmp{\sigma} \gnl
\gcl{1} \gnl
\gob{1}{A}
\gend=
\gbeg{2}{5}
\got{2}{B} \gnl
\gcmu \gnl
\gbmp{\tau} \gbmp{\lambda} \gnl
\gcl{1} \gnl
\gob{1}{A} \gob{1}{I}
\gend
\end{equation}
for some algebra morphism $\lambda: B\to I$. Let us write this symbolically as $\sigma=\tau*\lambda$.
Then $\sigma^{-1}=\lambda^{-1}*\tau^{-1}$ and
we find:
$$\gbeg{4}{8}
\got{2}{B}\got{1}{A} \gnl
\gcmu  \gcl{2} \gvac{1} \gnl
\gcl{1} \gmp{+} \gnl
\gcl{1} \gbr  \gnl
\gbmp{\sigma} \gcl{1} \gbmp{\sigma}  \gnl
\gcn{1}{1}{1}{0} \gmu \gnl
\hspace{-0,22cm}\gwmu{3} \gnl
\gvac{1} \gob{1}{A}
\gend=
\gbeg{6}{8}
\gvac{2} \got{1}{B} \got{3}{A} \gnl
\gvac{1} \gwcm{3} \gcl{1} \gnl
\gvac{1} \gcn{1}{1}{1}{0} \gvac{1} \gbr \gnl
\gcmu \gcn{1}{1}{3}{2} \gvac{1} \hspace{-0,34cm} \gcmu \gnl
\gvac{1} \hspace{-0,22cm} \gbmp{\tau} \gbmp{\lambda} \gcn{1}{1}{2}{2} \gvac{1} \hspace{-0,34cm}  \gbmp{\hspace{0,14cm}\lambda^{\hspace{-0,08cm}-}}
  \gbmp{\hspace{0,14cm}\tau^{\hspace{-0,08cm}-}} \gnl
\gvac{1} \gcn{1}{1}{2}{3} \gvac{2} \gwmu{3} \gnl
\gvac{2} \hspace{-0,14cm} \gwmu{4} \gnl
\gvac{3} \gob{2}{A}
\gend\stackrel{coass.}{\stackrel{\lambda*\lambda^{-1}}{=}}
\gbeg{4}{8}
\got{2}{B}\got{1}{A} \gnl
\gcmu  \gcl{2} \gvac{1} \gnl
\gcl{1} \gmp{+} \gnl
\gcl{1} \gbr  \gnl
\gbmp{\tau} \gcl{1} \gbmp{\tau}  \gnl
\gcn{1}{1}{1}{0} \gmu \gnl
\hspace{-0,22cm}\gwmu{3} \gnl
\gvac{1} \gob{1}{A}
\gend
$$
\qed\end{proof}

\begin{defn}
An automorphism $\Fi$ of $B\in\E$ is called {\em inner} if there exists a convolution invertible
morphism $g:I\to B$ fulfilling \equref{spin-down}.
An automorphism $\psi$ of $B\in\E$ is called {\em co-inner} if there exists a convolution invertible
morphism $\lambda:B\to I$ fulfilling \equref{spin-up}. \vspace{-0,4cm}
\begin{center}
\begin{tabular}{p{4.8cm}p{2cm}p{5.8cm}}
\begin{eqnarray} \eqlabel{spin-down}
\gbeg{2}{5}
\got{1}{B} \gnl
\gcl{1} \gnl
\gbmp{\Fi} \gnl
\gcl{1} \gnl
\gob{1}{B}
\gend=
\gbeg{5}{6}
\gvac{1} \got{1}{B} \gnl
\gvac{1} \gcl{2} \gnl
\gbmp{g} \gvac{2} \hspace{-0,34cm} \gbmp{\hspace{0,14cm}g^{\hspace{-0,08cm}-}} \gnl
\gvac{1} \hspace{-0,36cm} \gmu \gcn{1}{1}{2}{2} \gnl
\gvac{2} \hspace{-0,34cm} \gwmu{3} \gnl
\gvac{3} \gob{1}{B}
\gend
\end{eqnarray}  & &
\begin{eqnarray} \eqlabel{spin-up}
\gbeg{2}{5}
\got{1}{B} \gnl
\gcl{1} \gnl
\gbmp{\psi} \gnl
\gcl{1} \gnl
\gob{1}{B}
\gend
=
\gbeg{4}{6}
\gvac{1} \got{1}{B} \gnl
\gwcm{3} \gnl
\gcl{1} \gvac{1} \hspace{-0,34cm} \gcmu \gnl
\gvac{1} \hspace{-0,21cm} \gbmp{\lambda} \gcn{1}{2}{2}{2} \gvac{1} \hspace{-0,34cm}  \gbmp{\hspace{0,14cm}\lambda^{\hspace{-0,08cm}-}} \gnl
\gvac{3} \gob{1}{B}
\gend \vspace{-0,2cm}
\end{eqnarray}
\end{tabular}
\end{center}
\end{defn}

Clearly, a group-like $g$ on $B$ - viewed as an element of $\Coalg(I,B)$ - induces an inner
automorphism $\crta g=\Fi$ in \equref{spin-down},
and a group-like $\lambda$ on $B^*$ - viewed as an element of $\Alg(B,I)$ - induces a co-inner automorphism
$\crta\lambda=\psi$ in \equref{spin-up}.
We have that $\crta\lambda$ commutes with $\crta g$. To prove this we apply the bialgebra property
and use the fact that $g$ and $\lambda$ are (co)algebra morphisms:
$$
\gbeg{4}{10}
\gvac{1} \got{1}{B} \gnl
\gvac{1} \gcl{2} \gnl
\gbmp{g} \gvac{2} \hspace{-0,34cm} \gbmp{\hspace{0,14cm}g^{\hspace{-0,08cm}-}} \gnl
\gvac{1} \hspace{-0,34cm} \gmu \gcn{1}{1}{2}{2} \gnl
\gvac{2} \hspace{-0,34cm} \gwmu{3} \gnl
\gvac{2} \gwcm{3} \gnl
\gvac{2} \gcl{1} \gvac{1} \hspace{-0,34cm} \gcmu \gnl
\gvac{3} \hspace{-0,21cm} \gbmp{\lambda} \gcn{1}{2}{2}{2} \gvac{1} \hspace{-0,34cm}  \gbmp{\hspace{0,14cm}\lambda^{\hspace{-0,08cm}-}} \gnl
\gvac{5} \gob{1}{B}
\gend \hspace{-0,12cm}{\stackrel{3\times bialg.}{=}}\hspace{0,1cm}
\gbeg{6}{12}
\gvac{2} \got{1}{B} \gnl
\gvac{2} \gcl{2} \gnl
\gbmp{g} \gnl
\hspace{-0,34cm} \gcmu \gcmu \gnl
\gcl{1} \gbr \gcl{1} \gbmp{\hspace{0,14cm}g^{\hspace{-0,08cm}-}} \gnl
\gmu \gmu \hspace{-0,2cm} \gcmu \gnl
\gvac{1} \gcl{1} \gvac{1} \gbr \gcn{1}{1}{1}{3} \gnl
\gvac{1} \gwmu{3} \hspace{-0,22cm} \gcmu \gcmu \gnl
\gvac{3} \hspace{-0,21cm} \gbmp{\lambda} \gcn{1}{1}{2}{2} \gvac{1} \hspace{-0,34cm}  \gbr \gcl{1} \gnl
\gvac{5} \gmu \gmu \gnl
\gvac{5} \gcn{1}{1}{2}{2} \gvac{2} \hspace{-0,34cm} \gbmp{\hspace{0,14cm}\lambda^{\hspace{-0,08cm}-}} \gnl
\gvac{5} \gob{3}{B}
\gend\hspace{-0,12cm}{\stackrel{bialg.}{=}} \hspace{0,14cm}
\gbeg{9}{11}
\gvac{2} \got{2}{B} \gnl
\gvac{2} \gcn{1}{2}{2}{2} \gnl
\gcmu \gvac{3} \gwcm{3} \gnl
\gbmp{g} \gbmp{g} \gcmu \gvac{1} \gbmp{\hspace{0,14cm}g^{\hspace{-0,08cm}-}} \gvac{1} \hspace{-0,34cm} \gcmu \gnl
\gvac{1} \hspace{-0,22cm} \gcl{1} \gbr \gcn{1}{1}{1}{3} \gvac{1} \gbmp{\lambda} 	 \gvac{1} \hspace{-0,34cm}  \gbmp{\hspace{0,14cm}g^{\hspace{-0,08cm}-}} \gbmp{\hspace{0,14cm}g^{\hspace{-0,08cm}-}} \gnl
\gvac{2} \hspace{-0,36cm} \gbmp{\lambda} \gbmp{\lambda} \hspace{-0,22cm} \gcmu \gcmu \gvac{1} \gcl{2} \gbmp{\hspace{0,14cm}\lambda^{\hspace{-0,08cm}-}} \gnl
\gvac{3} \hspace{-0,34cm} \gmu \gcn{1}{1}{0}{0} \hspace{-0,22cm} \gbr \gcl{1} \gnl
\gvac{5} \hspace{-0,12cm} \gmu \gmu \gcn{1}{2}{3}{1} \gnl
\gvac{5} \gcn{1}{1}{2}{3} \gvac{2} \hspace{-0,34cm} \gbmp{\hspace{0,14cm}\lambda^{\hspace{-0,08cm}-}} \gnl
\gvac{7} \hspace{-0,34cm} \gwmu{4} \gnl
\gvac{8} \gob{2}{B}
\gend\hspace{-0,12cm}=
\gbeg{7}{9}
\gvac{2} \got{2}{B} \gnl
\gvac{2} \gcmu \gnl
\gvac{2} \gbmp{\lambda} \gcl{1} \gnl
\gcmu \gvac{1} \gcn{1}{1}{1}{0} \gnl
\gbmp{g} \gbmp{g} \gcmu \gnl
\gcl{1} \gbr \gcl{1} \gvac{1} \hspace{-0,34cm}  \gbmp{\hspace{0,14cm}g^{\hspace{-0,08cm}-}} \gbmp{\hspace{0,14cm}g^{\hspace{-0,08cm}-}} \gnl
\gvac{1} \hspace{-0,36cm} \gmu \gbmp{\hspace{0,14cm}\lambda^{\hspace{-0,08cm}-}} \gbmp{\hspace{0,14cm}\lambda^{\hspace{-0,08cm}-}} \gvac{1} \hspace{-0,36cm} \gcl{1} \gbmp{\hspace{0,14cm}\lambda^{\hspace{-0,08cm}-}} \gnl
\gvac{2} \gwmu{5} \gnl
\gvac{4} \gob{1}{B}
\gend=
\gbeg{4}{9}
\gvac{1} \got{1}{B} \gnl
\gwcm{3} \gnl
\gcl{1} \gvac{1} \hspace{-0,34cm} \gcmu \gnl
\gvac{1} \hspace{-0,21cm} \gbmp{\lambda} \gcn{1}{3}{2}{2} \gvac{1} \hspace{-0,34cm}  \gbmp{\hspace{0,14cm}\lambda^{\hspace{-0,08cm}-}} \gnl
 \gnl
\gvac{2} \gbmp{g} \gvac{2} \hspace{-0,34cm} \gbmp{\hspace{0,14cm}g^{\hspace{-0,08cm}-}} \gnl
\gvac{3} \hspace{-0,34cm} \gmu \gcn{1}{1}{2}{2} \gnl
\gvac{4} \hspace{-0,34cm} \gwmu{3} \gnl
\gvac{5} \gob{1}{B}
\gend
$$
For the third and the fourth equality note that in the third diagram the factors $\lambda g, \lambda g^{-1}:
I\to I$ cancel out and in the fourth diagram so do $\lambda^{-1}g, \lambda^{-1}g^{-1}: I\to I$.

Moreover, the assignment $G(B)=\Coalg(I,B)\to\Aut(\E; B), g\mapsto \crta g$ is a group morphism:
$$\crta{gg'}=
\gbeg{5}{6}
\gvac{2} \got{1}{B} \gnl
\gvac{2} \gcl{2} \gnl
\glmp \gnot{\hspace{-0,24cm}g g'} \grmptb \gvac{1} \glmp \gnot{\hspace{-0,34cm}(g g')^{\hspace{-0,08cm}-}} \grmp \gnl
\gvac{1} \gmu \gcn{1}{1}{2}{2} \gnl
\gvac{2} \hspace{-0,34cm} \gwmu{3} \gnl
\gvac{3} \gob{1}{B}
\gend=
\gbeg{7}{7}
\got{2}{I} \got{1}{B} \got{4}{I} \gnl
\gcmu \gcl{2} \gvac{1} \gcmu \gnl
\gbmp{g} \gbmp{g'} \gvac{1} \glmp \gnot{\hspace{-0,24cm}(g')^{\hspace{-0,08cm}-}} \grmpt  \glmpt \gnot{\hspace{-0,24cm}g^{\hspace{-0,08cm}-}} \grmp \gnl
\gmu \gcn{1}{1}{1}{0} \gvac{1} \gmu \gnl
\gvac{1} \hspace{-0,22cm} \gmu \gvac{1} \gcn{1}{1}{3}{2} \gnl
\gvac{2} \hspace{-0,34cm} \gwmu{4} \gnl
\gvac{3} \gob{2}{B}
\gend=
\gbeg{6}{8}
\gvac{2} \got{1}{B} \gnl
\gvac{2} \gcl{2} \gnl
\gvac{1} \gbmp{g'} \gvac{2} \hspace{-0,34cm} \glmp \gnot{\hspace{-0,24cm}(g')^{\hspace{-0,08cm}-}} \grmpt \gnl
\gvac{1} \hspace{-0,22cm} \gbmp{g} \gvac{1} \hspace{-0,56cm} \gmu \gcn{1}{1}{2}{2} \gnl
\gvac{2} \gcn{1}{1}{1}{2} \gvac{1} \hspace{-0,34cm} \gwmu{3} \gbmp{g^{\hspace{-0,08cm}-}} \gnl
\gvac{3} \gwmu{3} \gcn{1}{1}{3}{1} \gnl
\gvac{4} \gwmu{3} \gnl
\gvac{5} \gob{1}{B}
\gend=\crta g\crta {g'.}
$$

\subsection{Group-likes and the Drinfel'd double}


In \cite[Section 4]{Femic1} we studied the Drinfel'd double $D(B)=(B^{op})^*\bowtie B$ of a finite Hopf
algebra $B\in\E$ when $\Phi_{B,B}$ 
and $\Phi_{B,B^*}$ are symmetric. As a matter of fact we have:

\begin{lma} \cite[Lemma 4.3]{Femic1} \lelabel{symm-conds}
The following conditions are equivalent:

(i) $\Phi_{B,B}$ is symmetric;

(ii) $\Phi_{B^*,B^*}$ is symmetric;

(iii) $\Phi_{B,B^*}$ is symmetric.
\end{lma}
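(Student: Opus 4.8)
All three statements are instances of one phenomenon: the braidings $\Phi_{B,B}$, $\Phi_{B,B^*}$ and $\Phi_{B^*,B^*}$ are obtained from one another by ``rotating strands'' around the duality morphisms $e_B\colon B^*\ot B\to I$ and $c_B\colon I\to B\ot B^*$, which are available because $B$ is finite in $\E$; and such a rotation is a contravariant, essentially invertible operation on the relevant hom-sets, so it sends an isomorphism to the inverse of its transpose and in particular preserves the property ``$f=f^{-1}$''. I would first record the two ingredients precisely. First, naturality of $\Phi$ against $c_B$ and $e_B$ (together with $\Phi_{B,I}=\id_B=\Phi_{I,B}$) gives $\Phi_{B,\,B\ot B^*}\comp(B\ot c_B)=c_B\ot B$ and $\Phi_{B\ot B^*,\,B}\comp(c_B\ot B)=B\ot c_B$, plus the analogues for $e_B$. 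Second, the two hexagon identities give $\Phi_{B,\,B\ot B^*}=(B\ot\Phi_{B,B^*})\comp(\Phi_{B,B}\ot B^*)$ and $\Phi_{B\ot B^*,\,B}=(\Phi_{B,B}\ot B^*)\comp(B\ot\Phi_{B^*,B})$. Combining these and using the snake identities to cancel the $c_B$'s yields explicit ``bending'' formulas expressing $\Phi_{B,B^*}$, $\Phi_{B^*,B}$ and $\Phi_{B^*,B^*}$ as composites of duality morphisms with $\Phi_{B,B}$; the same formulas with $\Phi^{-1}$ in place of $\Phi$ express the inverses in terms of $\Phi_{B,B}^{-1}$.

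For (i)$\Leftrightarrow$(ii) the cleanest route is the full transpose. Since $B^*\ot B^*$ is a (left) dual of $B\ot B$, the assignment $f\mapsto f^{\vee}$ is a contravariant map on endomorphism sets with $(g\comp f)^{\vee}=f^{\vee}\comp g^{\vee}$, hence $(f^{-1})^{\vee}=(f^{\vee})^{-1}$ for $f$ an isomorphism, and it is essentially a bijection (the double transpose is conjugation by $B\iso B^{**}$). A standard diagram chase — naturality together with one hexagon, i.e.\ exactly the rotation of a crossing by $\pi$ — shows $(\Phi_{B,B})^{\vee}=\Phi_{B^*,B^*}$ up to the braiding/duality conventions fixed in the Preliminaries, and in any convention $(\Phi_{B,B})^{\vee}$ is self-inverse iff $\Phi_{B^*,B^*}$ is. Thus $\Phi_{B,B}=\Phi_{B,B}^{-1}$ iff $\Phi_{B^*,B^*}=\Phi_{B^*,B^*}^{-1}$. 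For (i)$\Leftrightarrow$(iii) I would use the one-leg bending formulas: they present $\Phi_{B,B^*}$ and $\Phi_{B^*,B}^{-1}$ as the \emph{same} duality-morphism sandwich applied to $\Phi_{B,B}$ and to $\Phi_{B,B}^{-1}$ respectively, so $\Phi_{B,B}=\Phi_{B,B}^{-1}$ forces $\Phi_{B,B^*}=\Phi_{B^*,B}^{-1}$; conversely, bending the leg back with the opposite snake identity recovers $\Phi_{B,B}=\Phi_{B,B}^{-1}$. Together with (i)$\Leftrightarrow$(ii) this closes the cycle of equivalences.

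The only real work — and the main place to be careful — is the bookkeeping in deriving the bending formulas: tracking which of $\Phi_{B,B^*}$, $\Phi_{B^*,B}$ appears, and whether $\Phi$ or $\Phi^{-1}$ results after each rotation, since the statement is sensitive to the left/right-dual and braiding conventions fixed earlier (it is \emph{not} sensitive to them in the end, but one must check this). Once the formulas are written in the braided-diagram calculus of the paper, each implication is immediate, and the ``iff''s hold because every rotation used is performed with isomorphisms (components of $\Phi$ and of the duality data) and is undone by the opposite rotation.
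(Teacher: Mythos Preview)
The paper does not prove this lemma here; it is quoted from \cite[Lemma~4.3]{Femic1}, so there is no in-paper argument to compare against. Your plan is correct and is essentially the standard argument one would expect: the duality data $e_B,c_B$ let one ``bend'' strands, and naturality of $\Phi$ together with the hexagon axioms turns this into bijective partial-transpose operations relating $\Phi_{B,B}$, $\Phi_{B,B^*}$, $\Phi_{B^*,B}$ and $\Phi_{B^*,B^*}$.

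Concretely, from $\Phi_{B\ot B^*,X}(c_B\ot X)=X\ot c_B$ and the hexagon one extracts
\[
\Phi_{B^*,X}=(e_B\ot X\ot B^*)\,(B^*\ot\Phi_{B,X}^{-1}\ot B^*)\,(B^*\ot X\ot c_B),
\]
and running the \emph{same} derivation for the reverse braiding $\bar\Phi_{Y,Z}:=\Phi_{Z,Y}^{-1}$ gives
\[
\Phi_{X,B^*}^{-1}=(e_B\ot X\ot B^*)\,(B^*\ot\Phi_{X,B}\ot B^*)\,(B^*\ot X\ot c_B).
\]
Setting $X=B$, both $\Phi_{B^*,B}$ and $\Phi_{B,B^*}^{-1}$ are the image of $\Phi_{B,B}^{-1}$ and $\Phi_{B,B}$, respectively, under one and the same bijection, so (i)$\Leftrightarrow$(iii) follows exactly as you outline (your formulation has the inverse on the other factor, which is the same statement). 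Iterating the bend on the remaining leg, or equivalently taking the full transpose as you do, handles (i)$\Leftrightarrow$(ii). Your caveat that the only genuine work is orientation bookkeeping, while the conclusion is convention-independent, is accurate.
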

\par\medskip

Until the end of this section we will assume that $\Phi_{B,B}$ is symmetric. 
We recall that 
$(B^{op})^*$ is a left $B$-module and that $B$ is a right $(B^{op})^*$-module via:
\begin{equation} \eqlabel{Dr-actions}
\gbeg{3}{5}
\got{1}{\hspace{-0,2cm}B^{op}} \got{2}{\hspace{-0,4cm}B} \got{1}{\hspace{-0,6cm}(B^{op})^*} \gnl
\gcl{1} \glm \gnl
\gcn{1}{1}{1}{3}\gcn{1}{1}{3}{1} \gnl
\gvac{1}  \gmp{\crta\ev} \gnl
\gob{1}{}
\gend=
\gbeg{3}{8}
\got{2}{B^{op}} \got{1}{\hspace{-0,3cm}B} \got{2}{(B^{op})^*} \gnl
\gcl{1} \gcmu \gcn{1}{5}{2}{2} \gnl
\gmu \gmp{-} \gnl
\gcn{1}{1}{2}{3} \gvac{1} \gcl{1} \gnl
\gvac{1} \gbr \gnl
\gvac{1} \gmu \gnl
\gvac{1} \gcn{1}{1}{2}{4} \gcn{1}{1}{4}{2} \gnl
\gvac{3} \hspace{-0,34cm} \gmp{\crta\ev} \gnl
\gob{1}{}
\gend
\qquad\qquad\textnormal{and}\qquad
\gbeg{3}{6}
\got{1}{B} \got{3}{(B^{op})^*} \gnl
\gcl{1} \gcn{1}{1}{3}{1} \gnl
\grm \gnl
\gcl{2} \gnl
\gob{1}{B}
\gend=
\gbeg{3}{9}
\got{2}{} \got{1}{B} \gvac{1} \got{1}{(B^{op})^*} \gnl
\gvac{1} \gwcm{3}  \gcl{5} \gnl 
\gvac{1} \hspace{-0,34cm} \gcmu \gvac{1} \hspace{-0,2cm} \gmp{-} \gnl
\gvac{2} \hspace{-0,36cm} \gibr \gcn{1}{1}{2}{1} \gnl
\gvac{2} \gcl{4} \gbr \gnl
\gvac{3} \gmu \gnl
\gvac{3} \gcn{1}{1}{2}{4} \gcn{1}{1}{4}{2} \gnl
\gvac{5} \hspace{-0,34cm} \gmp{\crta\ev} \gnl
\gvac{2} \gob{2}{B}
\gend
\end{equation}
Here $\crta\ev: B\ot B^*\to I$ is the evaluation $\crta\ev=\ev_{B^*}\Phi_{B,B^*}$.
The Hopf algebra $D(B)$ is endowed with the codiagonal comultiplication,
componentwise unit $\eta$ and counit $\Epsilon$ (i.e. $\eta_{B^*}\ot\eta_B$ and $\Epsilon_{B^*}\ot\Epsilon_B$
respectively), multiplication \equref{D(H)-mult} and antipode \equref{D(B)antip}:
\vspace{-0,7cm}
\begin{center}
\begin{tabular}{p{6cm}p{3.8cm}p{5cm}}
\begin{eqnarray} \eqlabel{D(H)-mult}
\nabla_{D(B)}=
\gbeg{6}{6}
\got{1}{B^*} \got{2}{B} \got{2}{B^*} \got{1}{B} \gnl
\gcl{1} \gcmu \gcmu \gcl{3} \gnl
\gcl{1} \gcl{1} \gbr \gcl{1} \gnl
\gcl{1} \glm \grm \gnl
\gwmu{3} \gwmu{3} \gnl
\gvac{1} \gob{1}{B^*} \gvac{2} \gob{1}{B}
\gend
=
\gbeg{8}{10}
\got{1}{B^*} \gvac{3} \got{2}{B} \got{1}{B^*} \got{1}{B} \gnl
\gcl{7} \gvac{3} \gcmu \gcl{4} \gcl{7} \gnl
\gvac{1} \gdb \gvac{1} \hspace{-0,42cm} \gcn{1}{1}{3}{2} \gvac{1} \gmp{-} \gnl
\gvac{2} \gibr \gcmu \gcl{1} \gnl
\gvac{2} \gcl{4} \gmu \gbr \gnl
\gvac{3} \gcn{1}{1}{2}{3} \gvac{1} \gcl{1} \gbr \gnl
\gvac{4} \gbr \gcl{1} \gcl{2} \gnl
\gvac{4} \gmu \gcn{1}{1}{1}{0} \gnl
\gvac{1} \gmu \gvac{2} \hspace{-0,22cm} \glmpt \gnot{\hspace{-0,32cm} \crta \ev} \grmptb
	\gvac{1} \hspace{-0,2cm} \gmu \gnl
\gvac{2} \gob{2}{B^*} \gvac{4} \gob{2}{B}
\gend \quad
\end{eqnarray}  & &  
\begin{eqnarray} \eqlabel{D(B)antip}
S_{D(B)}=
\gbeg{4}{9}
\gvac{1} \got{1}{B^*} \got{1}{B} \gnl
\gvac{1} \gbr \gnl
\gvac{1} \gmp{+} \gmp{\hspace{0,01cm}+{\hspace{-0,1cm}\vspace{-1cm}^*}} \gnl
\gvac{1} \gcn{1}{1}{1}{0} \gcn{1}{1}{1}{2} \gnl
\gcmu \gcmu \gnl
\gcl{1} \gbr \gcl{1} \gnl
\glm \grm \gnl
\gvac{1} \gcl{1} \gcl{1} \gnl
\gvac{1} \gob{1}{B^*} \gob{1}{B.}
\gend
\end{eqnarray}
\end{tabular}
\end{center}
In $\nabla_{D(B)}$ note that $(B^{op})^*\iso B^*$ as algebras. From \equref{D(B)antip} it follows:
\begin{equation}\eqlabel{1S}
S_{D(B)}(\eta_{B^*}\ot B)=\eta_{B^*}\ot S_B.
\end{equation}
For a left $D(B)$-module $M$ in $\E$ we will consider the identity \equref{BHmod-conv}. An object
$M\in\E$ is a left $D(B)$-module if and only if it is a left $B$- and a $B^*$-module satisfying
\equref{B tie H -mod} (\cite[Lemma 4.1]{Femic1}).
\begin{center}
\begin{tabular}{p{5.8cm}p{1cm}p{6.8cm}}
\begin{equation} \eqlabel{BHmod-conv}
\gbeg{3}{5}
\got{1}{D(B)} \got{3}{M} \gnl
\gcn{1}{1}{1}{3} \gvac{1} \gcl{1} \gnl
\gvac{1} \glm \gnl
\gvac{2} \gcl{1} \gnl
\gvac{1} \gob{3}{M}
\gend=
\gbeg{3}{5}
\got{1}{B^*} \got{1}{B} \got{1}{M} \gnl
\gcl{1} \glm \gnl
\gcn{1}{1}{1}{3} \gvac{1} \gcl{1} \gnl
\gvac{1} \glm \gnl
\gvac{1} \gob{3}{M}
\gend
\end{equation}  & & \vspace{-1,6cm}
\begin{equation} \eqlabel{B tie H -mod}
\gbeg{5}{6}
\got{1}{B} \got{1}{B^*} \got{3}{M} \gnl
\gcl{1} \gcn{1}{1}{1}{3} \gvac{1} \gcl{1} \gnl
\gcn{1}{2}{1}{5} \gvac{1} \glm \gnl
\gvac{3} \gcl{1} \gnl
\gvac{2} \glm \gnl
\gvac{2} \gob{3}{M}
\gend=
\gbeg{5}{9}
\got{2}{B} \got{2}{B^*} \got{1}{M} \gnl
\gcmu \gcmu \gcl{7} \gnl
\gcl{1} \gbr \gcl{1} \gnl
\glm \grm \gnl
\gvac{1} \gcl{1} \gcn{1}{1}{1}{3} \gnl
\gvac{1} \gcn{1}{2}{1}{5} \gvac{1} \glm \gnl
\gvac{4} \gcl{1} \gnl
\gvac{3} \glm \gnl
\gvac{3} \gob{3}{M}
\gend
\end{equation}
\end{tabular}
\end{center} \vspace{-0,6cm}
The actions \equref{Dr-actions} induce trivial actions on the group-likes on $(B^{op})^*$ and $B$, as
we show next. Let $g\in\Coalg(I,B), \lambda^*\in\Coalg(I,B^*)$ and let $\mu$ and $\nu$ denote the two
actions depicted in the diagrams \equref{Dr-actions}, respectively. We identify $g\tr\lambda^*=
(I\stackrel{g\ot\lambda^*}{\to}B\ot B^*\stackrel{\mu}{\to}B^*)$
and $g\tl\lambda^*=(I\stackrel{g\ot\lambda^*}{\to}B\ot B^*\stackrel{\nu}{\to}B)$.
Then $\tr$ and $\tl$ define a left $G(B)$-action on $G(B^*)$ and a right $G(B^*)$-action on $G(B)$,
respectively. Moreover, we have:
$$g\tr\lambda^*=
\gbeg{3}{6}
\got{1}{\hspace{-0,2cm}B^{op}} \gnl
\gcl{1} \gbmp{g} \gbmp{\lambda^*} \gnl
\gcl{1} \glm \gnl
\gcn{1}{1}{1}{3}\gcn{1}{1}{3}{1} \gnl
\gvac{1}  \gmp{\crta\ev} \gnl
\gob{1}{}
\gend=
\gbeg{4}{9}
\got{1}{B^{op}}  \gnl
\gcn{1}{2}{1}{1} \gvac{1} \hspace{-0,34cm} \gbmp{g} \gvac{1} \gbmp{\lambda^*} \gnl
\gvac{2} \hspace{-0,34cm} \gcmu \gcn{1}{5}{2}{2} \gnl
\gvac{1} \gmu \gmp{-} \gnl
\gcn{1}{1}{4}{5} \gcn{1}{1}{5}{5} \gnl
\gvac{2} \gbr \gnl
\gvac{2} \gmu \gnl
\gvac{2} \gcn{1}{1}{2}{4} \gcn{1}{1}{4}{2} \gnl
\gvac{4} \hspace{-0,34cm} \gmp{\crta\ev} \gnl
\gob{1}{}
\gend=
\gbeg{3}{9}
\got{1}{B^{op}} \gnl
\gcn{1}{2}{1}{1} \gvac{1} \hspace{-0,34cm} \gbmp{g}  \gnl
\gvac{2} \hspace{-0,34cm} \gcmu \gnl
\gvac{1} \gmu \gmp{-} \gnl
\gcn{1}{1}{4}{5} \gcn{1}{1}{5}{5} \gnl
\gvac{2} \gbr \gnl
\gvac{2} \gmu \gnl
\gvac{3} \hspace{-0,22cm} \gbmp{\lambda} \gnl
\gob{7}{}
\gend\stackrel{\lambda\in\Alg(B,I)}{=}
\gbeg{3}{9}
\got{1}{B^{op}} \gnl
\gcn{1}{2}{1}{1} \gvac{1} \hspace{-0,34cm} \gbmp{g} \gnl
\gvac{2} \hspace{-0,34cm} \gcmu \gnl
\gvac{1} \gbmp{\lambda} \gbmp{\lambda} \gcl{1} \gnl
\gvac{1} \gmu \gmp{-} \gnl
\gcn{1}{1}{4}{5} \gvac{2} \gbmp{\lambda} \gnl
\gvac{2} \gbr \gnl
\gvac{2} \gmu \gnl
\gob{6}{I}
\gend\stackrel{ass.}{\stackrel{I comm.}{=}}
\gbeg{3}{9}
\got{1}{B^{op}} \gnl
\gcn{1}{2}{1}{1} \gvac{1} \hspace{-0,34cm} \gbmp{g} \gnl
\gvac{2} \hspace{-0,34cm} \gcmu \gnl
\gvac{1} \gbmp{\lambda} \gcl{1}  \gmp{-} \gnl
\gvac{1} \gcl{2} \gbmp{\lambda} \gbmp{\lambda} \gnl
\gvac{2} \gibr \gnl
\gcn{1}{1}{3}{4} \gvac{1} \gmu \gnl
\gvac{2} \hspace{-0,22cm} \gmu \gnl
\gob{6}{I}
\gend=
\gbeg{2}{5}
\got{1}{B^{op}} \gnl
\gcl{1} \gnl
\gbmp{\lambda} \gbmp{g} \gnl
\gvac{1} \gcu{1} \gnl
\gob{1}{}
\gend=\lambda
$$

$$g\tl\lambda^*=
\gbeg{2}{6}
\got{1}{} \gnl
\gbmp{g} \gbmp{\lambda^*} \gnl
\grm \gnl
\gcl{2} \gnl
\gob{1}{B}
\gend=
\gbeg{4}{10}
\got{1}{} \gnl
\gvac{1} \gbmp{g} \gvac{1} \gbmp{\lambda^*} \gnl
\gwcm{3}  \gcl{5} \gnl 
\hspace{-0,34cm} \gcmu \gvac{1} \hspace{-0,2cm} \gmp{-} \gnl
\gvac{1} \hspace{-0,36cm} \gibr \gcn{1}{1}{2}{1} \gnl
\gvac{1} \gcl{4} \gbr \gnl
\gvac{2} \gmu \gnl
\gvac{2} \gcn{1}{1}{2}{4} \gcn{1}{1}{4}{2} \gnl
\gvac{4} \hspace{-0,34cm} \gmp{\crta\ev} \gnl
\gvac{1} \gob{2}{B}
\gend\stackrel{g\in\Coalg(I,B)}{=}
\gbeg{4}{10}
\got{2}{} \got{1}{I} \gnl
\gvac{1} \gwcm{3} \gnl
\gvac{1} \hspace{-0,34cm} \gcmu \gvac{1} \hspace{-0,2cm} \gbmp{g} \gnl
\gvac{2} \hspace{-0,34cm} \gibr \gvac{1} \hspace{-0,22cm} \gmp{-} \gnl
\gvac{3} \hspace{-0,34cm} \gbmp{g} \gbmp{g} \gcn{1}{1}{2}{1} \gnl
\gvac{3} \gcl{4} \gbr \gnl
\gvac{4} \gmu \gnl
\gvac{5} \hspace{-0,22cm} \gbmp{\lambda} \gnl
\gvac{3} \gob{2}{B}
\gend\stackrel{coass.}{\stackrel{I coc.}{=}}
\gbeg{4}{9}
\got{1}{} \got{1}{I} \gnl
\gwcm{3} \gnl
\gcl{1} \gvac{1} \hspace{-0,34cm} \gcmu \gnl
\gvac{1} \hspace{-0,22cm} \gbmp{g} \gvac{1} \hspace{-0,34cm} \gbmp{g} \gbmp{g} \gnl
\gvac{1} \gcn{1}{4}{2}{2} \gvac{1} \gcl{1} \gmp{-} \gnl
\gvac{3} \gibr \gnl
\gvac{3} \gmu \gnl
\gvac{4} \hspace{-0,22cm} \gbmp{\lambda} \gnl
\gvac{2} \gob{1}{B}
\gend=
\gbeg{2}{5}
\got{1}{} \gnl
\gvac{1} \gu{1} \gnl
\gbmp{g} \gbmp{\lambda} \gnl
\gcl{1} \gnl
\gob{1}{B}
\gend=g
$$
With these actions we regard the bicrossproduct of the groups $G(B^*)$ and $G(B)$.

\begin{prop} \prlabel{gr x gr}
Let $B\in\E$ be a finite Hopf algebra and let $D(B)$ denote its Drinfel'd double in $\E$. There
is a group isomorphism:
$$G(B^*)\times G(B) \iso G(D(B)).$$
\end{prop}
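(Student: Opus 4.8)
The plan is to construct the isomorphism $G(B^*)\times G(B)\to G(D(B))$ explicitly as the map sending a pair $(\lambda^*, g)$, where $\lambda^*\in\Coalg(I,B^*)$ and $g\in\Coalg(I,B)$, to the morphism $\lambda^*\ot g : I\to B^*\ot B = D(B)$. Since $D(B)$ carries the componentwise (tensor product) coalgebra structure, such a tensor product of coalgebra morphisms is automatically a coalgebra morphism $I\to D(B)$, hence a group-like; so the map is well defined on underlying sets. The inverse assignment is the expected one: given a group-like $\Gamma: I\to D(B)=B^*\ot B$, compose with $B^*\ot\Epsilon_B$ to get a coalgebra morphism $I\to B^*$ and with $\Epsilon_{B^*}\ot B$ to get one $I\to B$; that these two compositions reconstruct $\Gamma$ follows from $\Gamma$ being a coalgebra morphism into a tensor-product coalgebra together with counitality, a routine diagram chase. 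So bijectivity is essentially formal once the group structure is identified.

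The substance of the proof is checking that this bijection is a group homomorphism, i.e. that the multiplication on $G(D(B))$ induced by $\nabla_{D(B)}$ of \equref{D(H)-mult} transports, under the identification, to the bicrossproduct multiplication on $G(B^*)\times G(B)$ determined by the actions $\tr$ and $\tl$ of \equref{Dr-actions}. Concretely, I would take two group-likes $\lambda^*\ot g$ and $\mu^*\ot h$, feed them into $\nabla_{D(B)}$, and simplify the resulting braided diagram. The key simplifications are exactly the computations already carried out in the excerpt just before the statement: for group-likes on $B$ and $B^*$ the cross-actions in $D(B)$ collapse, namely $g\tr\mu^*$ reduces (via $\mu^*\in\Alg(B,I)$ being used as $\lambda$, and $g\in\Coalg(I,B)$) to $\mu^*$ itself, and $g\tl\mu^*$ reduces to $g$. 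Plugging these collapses into the general multiplication formula \equref{D(H)-mult}, the four-fold braided tangle degenerates: the $B^*$-leg of the product becomes $\nabla_{B^*}(\lambda^*\ot \mu^*)$ (the convolution product in $G(B^*)$, since the middle action contributes only $\mu^*$ and the comultiplications act trivially on the group-likes $g,h$), and the $B$-leg becomes $\nabla_B(g\ot h)$. Thus the product is $(\lambda^*\mu^*)\ot(gh)$, which is precisely the image of the product of pairs; and since the cross-actions are trivial on group-likes, the bicrossproduct degenerates to the direct product $G(B^*)\times G(B)$, matching the statement.

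I would organize the write-up as: (1) define the two mutually inverse set maps and note that $D(B)$ having componentwise coalgebra structure makes them well defined; (2) verify bijectivity by the counit diagram chase; (3) compute the product of two images under $\nabla_{D(B)}$ in braided diagrams, invoking the already-established collapses $g\tr\lambda^*=\lambda$ and $g\tl\lambda^*=g$ (displayed in the excerpt) to kill the cross-terms, and read off that the result is $(\lambda^*\mu^*)\ot(gh)$; (4) conclude that the induced group structure on $G(D(B))$ is the direct product $G(B^*)\times G(B)$. I would also remark that the unit $\eta_{B^*}\ot\eta_B$ of $D(B)$ corresponds to the pair of trivial group-likes, and that invertibility is automatic since every group-like is convolution invertible (with inverse given by composing with $S_{D(B)}$, which by \equref{1S} respects the tensor decomposition).

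The main obstacle I anticipate is bookkeeping in step (3): the multiplication $\nabla_{D(B)}$ in \equref{D(H)-mult} is a fairly intricate braided diagram involving the evaluation $\crta\ev$, two comultiplications, the braiding and the antipode on $B^*$, and one must show cleanly that restricting all four inputs to group-likes makes every braiding strand that mixes the two tensor factors trivializable. The honest way to do this is to isolate the sub-diagram computing the $\crta\ev$-pairing of the relevant coproduct legs of $g$ (resp.\ $h$) against $\lambda^*$ (resp.\ $\mu^*$); this is exactly the $g\tr\lambda^*$ computation reproduced verbatim above, which shows that pairing collapses to a scalar-free $\mu^*$-leg. Once that sub-diagram is replaced, the remaining tangle is a straightforward juxtaposition of the convolution product on $B^*$ and on $B$, with no residual braiding, and the homomorphism property drops out. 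No genuinely new identity beyond those already proved in the excerpt is needed; the work is purely diagrammatic consolidation.
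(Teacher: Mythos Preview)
Your proposal is correct and follows essentially the same route as the paper: define $\Gamma(\lambda^*,g)=\lambda^*\ot g$, use that $D(B)$ carries the codiagonal coalgebra structure so that $\Coalg(I,B^*\ot B)\iso\Coalg(I,B^*)\times\Coalg(I,B)$ as sets, and then verify the group law by feeding two such pairs into $\nabla_{D(B)}$ (via the middle formula in \equref{D(H)-mult}) and invoking the triviality of $g\tr\lambda^*$ and $g\tl\lambda^*$ established just before the proposition. The paper's write-up is marginally more compact in that it identifies the product directly with the bicrossproduct formula $\Gamma((\kappa^**(g\tr\lambda^*),(g\tl\lambda^*)*h))$ rather than unwinding the full diagram, but the content is the same.
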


\begin{proof}
First of all, 
note that there is a set-theoretic bijection $\Coalg(I, B^*\ot B)
\iso \Coalg(I, B^*)\times \Coalg(I,B)$.
We define $\Gamma: \Coalg(I, B^*)\times \Coalg(I,B) \to \Coalg(I,D(B))$ by:
$$\Gamma((\lambda^*, g))=
\gbeg{3}{5}
\got{3}{I} \gnl
\gwcm{3} \gnl
\gbmp{\lambda^*} \gvac{1} \gbmp{g} \gnl
\gcl{1} \gvac{1} \gcl{1} \gnl
\gob{1}{B^*} \gob{3}{B}
\gend
$$
for $(\lambda^*,g)\in\Coalg(I, B^*)\times \Coalg(I,B)$. To prove the compatibility of $\Gamma$
with the product take $\kappa^*,\lambda^*\in \Coalg(I, B^*)$ and $g,h\in\Coalg(I, B)$. We compute:
$$\Gamma((\kappa^*, g))*\Gamma((\lambda^*, h))=
\gbeg{6}{5}
\got{7}{I} \gnl
\gvac{2} \gwcm{3} \gnl
\glmp \gcmp \gnot{\hspace{-0,88cm}\Gamma(\q(\kappa^*, g)\q)} \grmptb  \glmp \gcmpt \gnot{\hspace{-0,86cm}\Gamma(\q(\lambda^*, h)\q)} \grmp  \gnl
\gvac{2} \gwmu{3} \gnl
\gob{7}{D(B)}
\gend=
\gbeg{6}{10}
\got{6}{I} \gnl
\gvac{1} \gwcm{4} \gnl
\gwcm{3} \gwcm{3} \gnl
\gbmp{\kappa^*} \gvac{1} \gbmp{g} \gbmp{\lambda^*} \gvac{1} \gbmp{h} \gnl
\gcl{1} \gcn{1}{1}{3}{2} \gcn{1}{1}{3}{4} \gvac{2} \gcl{4} \gnl
\gcl{1} \gcmu \gcmu \gnl
\gcl{1} \gcl{1} \gbr \gcl{1} \gnl
\gcl{1} \glm \grm \gnl
\gwmu{3} \gwmu{3} \gnl
\gvac{1} \gob{1}{B^*} \gvac{2} \gob{1}{B}
\gend=
\gbeg{7}{9}
\got{7}{I} \gnl
\gvac{1} \gwcm{5} \gnl
\gwcm{3} \gvac{1} \gwcm{3} \gnl
\gcl{1} \gvac{1} \hspace{-0,34cm} \gcmu  \gcmu \gvac{1} \gcn{1}{1}{0}{0} \gnl
\gvac{1} \hspace{-0,34cm} \gbmp{\kappa^*} \gvac{1} \hspace{-0,34cm} \gbmp{g} \gbr \gbmp{\lambda^*} \gvac{1} \hspace{-0,34cm} \gbmp{h} \gnl
\gvac{2} \gcl{1} \gcn{1}{1}{2}{2} \gvac{1} \hspace{-0,34cm} \gbmp{\lambda^*} \gbmp{g} \gcl{1} \gcn{1}{1}{2}{2} \gnl
\gvac{2} \gcn{1}{1}{2}{3} \gvac{1}  \glm \grm \gcn{1}{1}{2}{1} \gnl
\gvac{3} \gwmu{3} \gwmu{3} \gnl
\gvac{4} \gob{1}{B^*} \gvac{2} \gob{1}{B}
\gend
$$

$$
\stackrel{coass.}{\stackrel{I coc.}{=}}
\gbeg{7}{8}
\got{7}{I} \gnl
\gvac{1} \gwcm{5} \gnl
\gwcm{3} \gvac{1} \gwcm{3} \gnl
\gcl{1} \gvac{1} \hspace{-0,34cm} \gcmu  \gcmu \gvac{1} \gcn{1}{1}{0}{0} \gnl
\gvac{1} \hspace{-0,34cm} \gbmp{\kappa^*} \gvac{1} \hspace{-0,34cm} \gbmp{g} \gbmp{\lambda^*} \gbmp{g}  \gbmp{\lambda^*} \gvac{1} \hspace{-0,34cm} \gbmp{h} \gnl 
\gvac{2} \gcn{1}{1}{1}{2} \gvac{1} \hspace{-0,34cm} \glm \grm \gcn{1}{1}{2}{1} \gnl
\gvac{3} \gwmu{3} \gwmu{3} \gnl
\gvac{4} \gob{1}{B^*} \gvac{2} \gob{1}{B}
\gend=
\gbeg{7}{6}
\got{7}{I} \gnl
\gvac{1} \gwcm{5} \gnl
\gwcm{3} \gvac{1} \gwcm{3} \gnl
\gbmp{\kappa^*} \gvac{1} \hspace{-0,34cm} \glmp \gnot{\hspace{-0,4cm}g\tr\lambda^*} \grmp \glmp \gnot{\hspace{-0,4cm}g\tl\lambda^*} \grmp  \gvac{1} \hspace{-0,22cm} \gbmp{h} \gnl
\gvac{1} \gwmu{3} \gvac{1} \gwmu{3} \gnl
\gvac{2} \gob{1}{B^*} \gvac{3} \gob{1}{B}
\gend=\Gamma((\kappa^* * (g\tr\lambda^*)),((g\tl \lambda^*)* h))=\Gamma((\kappa^*,g)\cdot (\lambda^*, h)).
$$
\qed\end{proof}

Now we define the map
\begin{equation} \eqlabel{map Teta}
\Theta: G(D(B))\stackrel{\Gamma^{-1}}{\to} G(B^*)\times G(B) \to \Aut(\E;  B)
\end{equation}
by $\Theta((\lambda,g))=\crta g \crta{\lambda}$. It is a group map, because $\crta g$ and
$\crta{\lambda}$ commute, the assignment $g\mapsto\crta g$ is a group one for all $g\in G(B),
\lambda\in G(B^*)$, and the actions $\tr$ and $\tl$ are trivial.
\par\bigskip

Consider the group $S(B)=\{ (\lambda, g)\in G(D(B)) \hspace{0,2cm} \vert \hspace{0,2cm} \crta{g}
=\crta\lambda\}$. 
In the following lemma we will identify $D(B)^*=((B^{op})^*\bowtie B)^*\cong B^{op}\bowtie B^*$,
and henceforth we will consider $G(D(B)^*)=G(B)\times G(B^*)$, that is: $\Alg(D(B), I)=
\Alg(B^*, I)\times\Alg(B,I)$ as sets.

\begin{lma} \lelabel{equiv cond gr}
For $g\in G(B), \lambda\in G(B^*)$ the following are equivalent: \\
\noindent (i) $g\bowtie\lambda\in G(D(B)^*)$; \vspace{-0,24cm}
\begin{center} \hspace{-0,4cm}
\begin{tabular}{p{4.5cm}p{1.6cm}p{4.8cm}}
(ii) 
$
\gbeg{4}{6}
\got{2}{B} \got{2}{B^*} \gnl
\gcmu \gcmu  \gnl
\gcl{1} \gbr \gcl{1} \gnl
\glm \grm \gnl
\gvac{1} \gbmp{g^*} \gbmp{\lambda} \gnl
\gvac{1} \gob{1}{}
\gend=
\gbeg{3}{4}
\got{1}{B} \got{1}{B^*} \gnl
\gcl{1} \gcl{1} \gnl
\gbmp{\lambda} \gbmp{g^*} \gnl
\gob{1}{}
\gend$;  & &
(iii) 
$
\gbeg{3}{5}
\got{2}{B} \gnl
\gcmu \gnl
\gbmp{\lambda} \gcl{1} \gbmp{g} \gnl
\gvac{1} \gmu \gnl
\gob{4}{B}
\gend=
\gbeg{3}{5}
\got{4}{B} \gnl
\gvac{1} \gcmu \gnl
\gbmp{g} \gcl{1} \gbmp{\lambda} \gnl
\gmu \gnl
\gob{2}{B} 
\gend \quad;$
\end{tabular}
\end{center} \vspace{-0,2cm}
(iv) $(\lambda, g)\in S(B)$.

Consequently, $S(B)\iso G(D(B)^*)$.
\end{lma}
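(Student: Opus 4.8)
The plan is to prove the chain of equivalences $(i)\Leftrightarrow(ii)\Leftrightarrow(iii)\Leftrightarrow(iv)$, all of which amount to rewriting the single ``$g,\lambda$-commutation'' relation in three different internal languages: as a compatibility of the pair $g\bowtie\lambda$ with the multiplication \equref{D(H)-mult} of $D(B)$ (dualized), as a statement about the two mutual Drinfel'd actions \equref{Dr-actions}, and as the equality of the inner automorphism $\crta g$ with the co-inner automorphism $\crta\lambda$ from \equref{spin-down}--\equref{spin-up}. The last equivalence with $(iv)$ is then just the definition of $S(B)$.

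First I would treat $(i)\Leftrightarrow(ii)$. By the identification $D(B)^*\cong B^{op}\bowtie B^*$ recalled just before the lemma, an element of $\Alg(D(B),I)=\Alg(B^*,I)\times\Alg(B,I)$ is a pair $(g^*,\lambda)$ where $g^*\in\Alg(B^*,I)\cong\Coalg(I,B)$ corresponds to $g$. The condition $g\bowtie\lambda\in G(D(B)^*)=\Alg(D(B),I)$ means precisely that $g^*\ot\lambda$ is an algebra morphism $D(B)\to I$, i.e. that it is compatible with $\nabla_{D(B)}$. Unravelling the left-hand expression of \equref{D(H)-mult} against $g^*\ot\lambda$ (using that $\crta\ev$ pairs $B$ with $B^*$, that $g^*=\crta\ev(g\ot-)$ and $\lambda$ is an algebra morphism, so they absorb the comultiplications and the antipode leg) collapses exactly to diagram $(ii)$; conversely $(ii)$ together with the fact that $g^*$ and $\lambda$ are already algebra morphisms on the nose gives back multiplicativity. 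This is a direct diagram manipulation with the braidings, of the same flavour as the group-like computations already carried out in the excerpt (e.g. the proof that $\crta\lambda$ commutes with $\crta g$).

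Next, $(ii)\Leftrightarrow(iii)$. Here I would feed both sides of $(iii)$ into $\crta\ev$ paired on the $B$-output with a test object $B^*$, and use the explicit form of the right $(B^{op})^*$-action on $B$ and the left $B$-action on $(B^{op})^*$ from \equref{Dr-actions}: the left side of $(iii)$, composed with $\crta\ev$, produces the left-hand side of $(ii)$ (with the $g$-leg becoming $g^*$ via $\crta\ev(g\ot-)$), and the right side produces the right-hand side of $(ii)$. Since $B$ is finite, $\crta\ev$ is a nondegenerate pairing, so the two reformulations are equivalent. Then for $(iii)\Leftrightarrow(iv)$: rewrite \equref{spin-down} for $\Fi=\crta g$ using that $g$ is a group-like (so its convolution inverse is $Sg$) and \equref{spin-up} for $\psi=\crta\lambda$ using $\lambda^{-1}=\lambda S$; the identity $\crta g=\crta\lambda$ then reads, after cancelling the invertible legs $g,g^{-1}$ on one side and $\lambda,\lambda^{-1}$ on the other against the bialgebra axiom, exactly as $(iii)$. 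Finally $(\lambda,g)\in S(B)$ is by definition $(\lambda,g)\in G(D(B))$ with $\crta g=\crta\lambda$; \prref{gr x gr} identifies $G(D(B))\cong G(B^*)\times G(B)$, so this is $(iv)$, and combining everything yields $S(B)\cong G(D(B)^*)$ as claimed.

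The main obstacle I expect is bookkeeping rather than conceptual: getting the $B^{op}$ versus $B$ orientations and the placement of the (single, because the braiding is symmetric on the relevant objects) antipode legs right when expanding \equref{D(H)-mult} and \equref{Dr-actions}, so that the collapsed diagrams genuinely match $(ii)$ and $(iii)$ on the nose. Lemma~\ref{le:symm-conds} guarantees all the symmetricity one needs ($\Phi_{B,B}$, $\Phi_{B^*,B^*}$, $\Phi_{B,B^*}$ all symmetric), which is what makes these cancellations legitimate; without it the antipode legs would not straighten out. Everything else is routine graphical calculus of the kind already displayed in Section~3.
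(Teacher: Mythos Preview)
Your proposal is correct and follows essentially the same route as the paper: $(i)\Leftrightarrow(ii)$ by unravelling $\nabla_{D(B)}$ against $g^*\ot\lambda$, $(ii)\Leftrightarrow(iii)$ by passing through the nondegenerate pairing $\crta\ev$ (the paper phrases this as the universal property of $ev_{B^*}$ followed by an invertible composition, but this is the same mechanism), and $(iii)\Leftrightarrow(iv)$ by convolving with $g^{-1}$ and $\lambda^{-1}$ to isolate $\crta g=\crta\lambda$. Your assessment that the only real difficulty is the orientation and antipode bookkeeping, covered by \leref{symm-conds}, matches the paper's treatment.
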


\begin{proof}
We consider $g^*\in\Alg(B^*,I)$ and $\lambda\in\Alg(B, I)$. Then clearly (i) is equivalent to $\star$ in:
$$
\gbeg{4}{6}
\got{2}{B} \got{2}{B^*} \gnl
\gcmu \gcmu  \gnl
\gcl{1} \gbr \gcl{1} \gnl
\glm \grm \gnl
\gvac{1} \gbmp{g^*} \gbmp{\lambda} \gnl
\gvac{1} \gob{1}{}
\gend=
\gbeg{4}{7}
\gvac{1} \got{1}{B} \got{1}{B^*} \gnl
\gu{1} \gcl{1} \gcl{1} \gu{1} \gnl
\glmptb \gnot{\hspace{-0,4cm} D(B)} \grmpt \glmpt \gnot{\hspace{-0,4cm} D(B)} \grmptb  \gnl
\gwmu{4} \gnl
\gvac{1} \glmpb \gnot{\hspace{-0,4cm} D(B)} \grmpb \gnl
\gvac{1} \gbmp{g^*} \gbmp{\lambda} \gnl
\gvac{1} \gob{1}{}
\gend\stackrel{\star}{=}
\gbeg{4}{5}
\gvac{1} \got{1}{B} \got{1}{B^*} \gnl
\gu{1} \gcl{1} \gcl{1} \gu{1} \gnl
\glmptb \gnot{\hspace{-0,4cm} g^*\bowtie\lambda} \grmpt \glmpt \gnot{\hspace{-0,4cm} g^*\bowtie\lambda} \grmptb  \gnl
\gwmu{4} \gnl
\gvac{1} \gob{2}{I}
\gend=
\gbeg{4}{4}
\gvac{1} \got{1}{B} \got{1}{B^*} \gnl
\gu{1} \gcl{1} \gcl{1} \gu{1} \gnl
\gbmp{g^*} \gbmp{\lambda} \gbmp{g^*} \gbmp{\lambda} \gnl
\gob{1}{}
\gend=
\gbeg{3}{4}
\got{1}{B} \got{1}{B^*} \gnl
\gcl{1} \gcl{1} \gnl
\gbmp{\lambda} \gbmp{g^*} \gnl
\gob{1}{}
\gend
$$
By \equref{D(H)-mult}, (ii) is equivalent to $\star$ in:
$$
\gbeg{6}{11}
\got{1}{} \gvac{1} \got{2}{B} \got{1}{B^*} \gnl
\gvac{2} \gcmu \gcl{4} \gnl
\gvac{1} \gcn{1}{1}{3}{2} \gvac{1} \gmp{-} \gnl
\gbmp{g} \gcmu \gcl{1} \gnl
\gmu \gbr \gnl
\gcn{1}{1}{2}{3} \gvac{1} \gcl{1} \gbr \gnl
\gvac{1} \gbr \gcl{1} \gbmp{\lambda} \gnl
\gvac{1} \gmu \gcn{1}{1}{1}{0} \gnl
\gvac{2} \hspace{-0,34cm} \gbr \gnl
\gvac{2} \gev \gnl
\gvac{4} \gob{2}{}
\gend=
\gbeg{8}{11}
\got{1}{} \gvac{3} \got{2}{B} \got{1}{B^*} \gnl
\gvac{4} \gcmu \gcl{4} \gnl
\gvac{1} \gdb \gvac{1} \hspace{-0,42cm} \gcn{1}{1}{3}{2} \gvac{1} \gmp{-} \gnl
\gvac{1} \gbmp{g} \gibr \gcmu \gcl{1} \gnl
\gvac{1} \gbr \gmu \gbr \gnl
\gvac{1} \gev \gcn{1}{1}{2}{3} \gvac{1} \gcl{1} \gbr \gnl
\gvac{4} \gbr \gcl{1} \gbmp{\lambda} \gnl
\gvac{4} \gmu \gcn{1}{1}{1}{0} \gnl
\gvac{5} \hspace{-0,34cm} \gbr \gnl
\gvac{5} \gev \gnl
\gvac{7} \gob{2}{}
\gend\stackrel{\equref{D(H)-mult}}{=}
\gbeg{5}{7}
\got{1}{} \got{2}{B} \got{2}{B^*} \gnl
\gvac{1} \gcmu \gcmu \gnl
\gbmp{g} \gcl{1} \gbr \gcl{1} \gnl
\gcn{1}{1}{1}{3} \glm \grm \gnl
\gvac{1} \gbr \gbmp{\lambda} \gnl
\gvac{1} \gev \gnl
\gvac{3} \gob{1}{}
\gend\stackrel{\star}{=}
\gbeg{3}{5}
\got{1}{B} \got{1}{} \got{1}{B^*} \gnl
\gcl{1} \gbmp{g} \gcl{1} \gnl
\gbmp{\lambda} \gbr \gnl
\gvac{1} \gev \gnl
\gob{1}{}
\gend
$$
By the universal property of $ev_{B^*}$ this is equivalent to:
$$
\gbeg{5}{9}
\got{1}{I} \gvac{1} \got{2}{B} \gnl
\gcl{2} \gvac{1} \gcmu \gnl
\gvac{1} \gcn{1}{1}{3}{2} \gvac{1} \gmp{-} \gnl
\gbmp{g} \gcmu \gcl{1} \gnl
\gmu \gbr \gnl
\gcn{1}{1}{2}{3} \gvac{1} \gcl{1} \gbmp{\lambda} \gnl
\gvac{1} \gbr \gnl
\gvac{1} \gmu \gnl
\gvac{2} \gnl
\gvac{1} \gob{2}{B} \gob{1}{}
\gend=
\gbeg{3}{4}
\got{1}{I} \got{1}{B} \gnl
\gbmp{g} \gcl{1} \gnl
\gcl{1} \gbmp{\lambda} \gnl
\gob{1}{B} \gob{1}{}
\gend
$$
Composing to this: $(B\ot -)(\Phi_{I,B}\ot B)\Phi_{B,B}\Delta_B$ from above and then $\nabla_B$
from below -- which is an invertible process -- we get that it is equivalent to \equref{racun3},
which by the (co)associativity and the antipode rule (use also that $\Phi_{B,B}$ is symmetric)
is further equivalent to \equref{racun4}: \vspace{-0,4cm}
\begin{center}
\begin{tabular}{p{5.8cm}p{2cm}p{5.8cm}}
\begin{equation} \eqlabel{racun3}
\gbeg{5}{13}
\got{1}{I} \gvac{1} \got{2}{B} \gnl
\gcl{3} \gvac{1} \gcmu \gnl
\gvac{2} \gbr \gnl
\gvac{1} \gcn{1}{1}{3}{1} \gcn{1}{1}{3}{4} \gnl
\gbr \gvac{1} \gcmu \gnl
\gcl{5} \gcl{1} \gcn{1}{1}{3}{2} \gvac{1} \gmp{-} \gnl
\gvac{1} \gbmp{g} \gcmu \gcl{1} \gnl
\gvac{1} \gmu \gbr \gnl
\gvac{1} \gcn{1}{1}{2}{3} \gvac{1} \gcl{1} \gbmp{\lambda} \gnl
\gvac{2} \gbr \gnl
\gcn{1}{1}{1}{2} \gvac{1} \gmu \gnl
\gvac{1} \hspace{-0,22cm} \gwmu{3} \gnl
\gvac{2} \gob{1}{B} \gob{4}{}
\gend=
\gbeg{3}{7}
\got{1}{I} \got{2}{B} \gnl
\gcl{2} \gcmu \gnl
\gvac{1} \gbr \gnl
\gbr \gbmp{\lambda} \gnl
\gcl{1} \gbmp{g} \gnl
\gmu \gnl
\gob{2}{B} \gob{1}{}
\gend 
\end{equation}  & & \vspace{1,5cm}
\begin{equation} \eqlabel{racun4} 
\gbeg{3}{5}
\got{4}{B} \gnl
\gvac{1} \gcmu \gnl
\gbmp{g} \gcl{1} \gbmp{\lambda} \gnl
\gmu \gnl
\gob{2}{B}
\gend=
\gbeg{4}{5}
\got{2}{B} \gnl
\gcmu \gnl
\gbmp{\lambda} \gcl{1} \gbmp{g} \gnl
\gvac{1} \gmu \gnl
\gob{4}{B}
\gend \vspace{-0,24cm}
\end{equation}
\end{tabular}
\end{center}
``Multiply'' \equref{racun4} from the left by $g^{-1}$, that is, compose it from below by
$\nabla_B(g^{-1}\ot -)$ -- it is equivalent to:
$$
\gbeg{3}{5}
\got{2}{B} \gnl
\gcmu \gnl
\gcl{2} \gbmp{\lambda} \gnl
 \gnl
\gob{1}{B}
\gend=
\gbeg{5}{8}
\got{3}{B} \gnl
\gvac{1} \hspace{-0,34cm} \gcmu \gnl
\gvac{1} \gbmp{\lambda} \gcl{3} \gnl
 \gnl
\gvac{1} \hspace{-0,34cm} \gbmp{\hspace{0,14cm}g^{\hspace{-0,08cm}-}} \gvac{2} \hspace{-0,34cm} \gbmp{g} \gnl
\gvac{1} \gcn{1}{1}{2}{2} \gvac{1} \gmu \gnl
\gvac{2} \hspace{-0,2cm} \gwmu{3} \gnl
\gvac{3} \gob{1}{B}
\gend
$$
Finally, ``multiply'' this by $\lambda^{-1}$ from the left, i.e. compose it from above with
$(\lambda^{-1}\ot -)\Delta_B$ and observe that it is equivalent to $\crta\lambda=\crta g$.
\qed\end{proof}

\begin{rem}
We defined the morphism $g\mapsto\crta g$ in \equref{spin-down}, and consequently the subgroup $S(B)$ of
$G(D(B))$, differently than in \cite{VZ4}. 
Our version of the map is a group one (and not an anti-group map) and consequently so is
$\Theta$ in \equref{map Teta}.
\end{rem}

\section{The quantum automorphism group and the quantum Brauer group}

In this section we will define a group morphism $\Pi: \Aut(\C; B) \to \BQ(\C; B)$. It will send an
automorphism $\alpha$ of $B$ in $\C$ to the class of $\End(B_{\alpha^{-1}})$ in the quantum Brauer
group of $B$ in $\C$. Here $B_{\alpha}=B$ as an object in $\C$ 
with the $B$-module and comodule structures twisted by $\alpha$.
As a matter of fact, we will prove 
that $[B_{\alpha}, B_{\alpha}]$ is an algebra in ${}_B^B\YD(\E)$ for any closed braided monoidal category $\E$,
a faithfully projective Hopf algebra $B\in\E$ and its arbitrary automorphism $\alpha$. 
The great deal of the construction of the map $\Pi$ we will perform in $\E$ making use of braided diagrams.
We will suppose that $\Phi_{B,B}$ is symmetric.

Using the latter symmetricity assumption, one may prove that \equref{alfa} defines an action
on $B_{\alpha}$. It is easily seen that \equref{co-alfa} makes $B_{\alpha}$ a left $B$-comodule. \vspace{-0,8cm}
\begin{center}
\begin{tabular}{p{4.8cm}p{2cm}p{5.8cm}}
\begin{eqnarray} \eqlabel{alfa}
\scalebox{0.9}[0.9]{
\gbeg{2}{5}
\got{1}{B} \got{1}{\hspace{0,1cm}B_{\alpha}} \gnl
\gcl{1} \gcl{1} \gnl
\glm \gnl
\gvac{1} \gcl{1} \gnl
\gvac{1} \gob{1}{B_{\alpha}}
\gend} = \scalebox{0.9}[0.9]{
\gbeg{2}{7}
\got{2}{B} \got{1}{B} \gnl
\gcmu \gcl{2} \gnl
\gmp{-} \gbmp{\alpha} \gnl
\gcn{1}{1}{1}{2} \gmu \gnl
\gvac{1} \hspace{-0,22cm} \gbr \gnl
\gvac{1} \gmu \gnl
\gvac{1} \gob{2}{B}
\gend}
\end{eqnarray}  & &  \vspace{0,05cm}
\begin{eqnarray} \eqlabel{co-alfa}
\scalebox{0.9}[0.9]{
\gbeg{2}{5}
\got{1}{} \got{1}{\hspace{0,1cm}B_{\alpha}} \gnl
\gvac{1} \gcl{1} \gnl
\glcm \gnl
\gcl{1} \gcl{1} \gnl
\gob{1}{B} \gob{1}{B_{\alpha}}
\gend} =
\scalebox{0.9}[0.9]{
\gbeg{2}{5}
\got{2}{B} \gnl
\gcmu \gnl
\gibr \gnl
\gmp{+} \gcl{1} \gnl
\gob{1}{B} \gob{1}{B}
\gend}
\end{eqnarray}
\end{tabular}
\end{center} \vspace{-0,5cm}
Equipped like this, $B_{\alpha}$ is almost a Yetter-Drinfel'd module. Namely, using that $\Phi_{B,B}$
is symmetric, one proves that $B_{\alpha}$ satifies \equref{alfa-YD}.
A similar identity we have for $B_{\alpha}^*$, \equref{alfa*-YD}. The proof of the latter is somewhat
involved, one inserts twice the antipode identity at appropriate places, and uses the fact that
$\Phi_{B,B^*}$ is symmetric. 
\vspace{-0,9cm} 
\begin{center}
\begin{tabular}{p{6.2cm}p{2cm}p{6.2cm}}
\begin{equation} \eqlabel{alfa-YD}
\gbeg{4}{8}
\got{1}{B} \got{5}{B_{\alpha}} \gnl
\gcn{2}{2}{1}{5} \gvac{1} \gcl{2} \gnl \gnl
\gvac{2} \glm \gnl
\gvac{2} \glcm \gnl
\gcn{2}{2}{5}{1} \gvac{1} \gcl{2} \gnl \gnl
\gob{1}{B} \gob{5}{B_{\alpha}}
\gend=
\gbeg{6}{10}
\gvac{1} \got{2}{B} \got{4}{B_{\alpha}} \gnl
\gvac{1} \gcmu \gcn{1}{1}{4}{4} \gnl
\gvac{1} \gcn{1}{1}{1}{0} \hspace{-0,21cm} \gcmu \glcm \gnl
\gvac{1} \gcl{4} \gbr \gcl{2} \gcl{4} \gnl
\gvac{2} \gbmp{\alpha} \gcl{1} \gnl
\gvac{2}  \gmp{+} \gbr \gnl
\gvac{1} \gvac{1} \gbr \gcl{1} \gnl
\gvac{1} \gcn{1}{1}{1}{2} \gmu \glm \gnl
\gvac{2} \hspace{-0,21cm} \gmu \gcn{1}{1}{4}{4} \gnl
\gvac{2} \gob{2}{B} \gob{4}{B_{\alpha}}
\gend
\end{equation}  & & \vspace{0,01cm}
\begin{equation} \eqlabel{alfa*-YD}
\gbeg{4}{8}
\got{1}{B} \got{5}{B_{\alpha}^*} \gnl
\gcn{2}{2}{1}{5} \gvac{1} \gcl{2} \gnl \gnl
\gvac{2} \glm \gnl
\gvac{2} \glcm \gnl
\gcn{2}{2}{5}{1} \gvac{1} \gcl{2} \gnl \gnl
\gob{1}{B} \gob{5}{B_{\alpha}^*}
\gend=
\gbeg{6}{9}
\gvac{1} \got{2}{B} \got{4}{B_{\alpha}^*} \gnl
\gvac{1} \gcmu \gcn{1}{1}{4}{4} \gnl
\gvac{1} \gcn{1}{1}{1}{0} \hspace{-0,21cm} \gcmu \glcm \gnl
\gvac{1} \gbmp{\alpha} \gbr \gcl{1} \gcl{3} \gnl
\gvac{1} \gcl{2} \gmp{+} \gbr \gnl
\gvac{1} \gvac{1} \gbr \gcl{1} \gnl
\gvac{1} \gcn{1}{1}{1}{2} \gmu \glm \gnl
\gvac{2} \hspace{-0,21cm} \gmu \gcn{1}{1}{4}{4} \gnl
\gvac{2} \gob{2}{B} \gob{4}{B_{\alpha}^*}
\gend
\end{equation}
\end{tabular}
\end{center} \vspace{-0,6cm}
Even though $B_{\alpha}$ is not a Yetter-Drinfel'd module, its inner hom-object is. One proves this
identifying $[B_{\alpha}, B_{\alpha}]\iso B_{\alpha}\ot B_{\alpha}^*$ and applying \equref{alfa-YD} and \equref{alfa*-YD}. 
Thus $[B_{\alpha}, B_{\alpha}]$ is an algebra in ${}_B^B\YD(\E)$.
\par\medskip

\subsection{Two Azumaya algebras in $\E$} \sslabel{ordinary Azumaya}

In this subsection we will prove that $[B_{\alpha}, \q B_{\alpha}] \crta\ot \crta{[B_{\alpha}, \q B_{\alpha}]}$
and $\crta{[B_{\alpha}, \q B_{\alpha}]} \crta\ot [B_{\alpha}, \q B_{\alpha}]$ are Azumaya algebras in $\E$.
This will be used in the construction of the announced group map.

Let $B_{\alpha}'=B_{\alpha}$ as a left $B$-module in $\E$ with a left $B$-comodule structure given by
\begin{eqnarray} \eqlabel{co'-alfa}
\scalebox{0.9}[0.9]{
\gbeg{2}{5}
\got{1}{} \got{1}{\hspace{0,1cm}B_{\alpha}'} \gnl
\gvac{1} \gcl{1} \gnl
\glcm \gnl
\gcl{1} \gcl{1} \gnl
\gob{1}{B} \gob{1}{B_{\alpha}'}
\gend} =
\scalebox{0.9}[0.9]{
\gbeg{2}{5}
\got{1}{} \got{1}{\hspace{0,1cm}B_{\alpha}} \gnl
\gvac{1} \gcl{1} \gnl
\glcm \gnl
\gbmp{\hspace{0,14cm}\alpha^{\hspace{-0,08cm}-}} \gcl{1} \gnl
\gob{1}{B} \gob{1}{B_{\alpha}}
\gend}
\scalebox{0.9}[0.9]{
\gbeg{2}{6}
\got{2}{B} \gnl
\gcmu \gnl
\gibr \gnl
\gmp{S} \gcl{2} \gnl
\gbmp{\hspace{0,14cm}\alpha^{\hspace{-0,08cm}-}} \gnl
\gob{1}{B} \gob{1}{B.}
\gend}
\end{eqnarray}
Here $\alpha^{\hspace{-0,08cm}-}$ stands for $\alpha^{-1}$. Note that \equref{co'-alfa} and
\equref{alfa-YD} yield \equref{alfa'-YD}. Similarly, from
\equref{co'-alfa} and the proof of \equref{alfa*-YD} one obtains \equref{alfa-YD-'*}.
\begin{center}
\begin{tabular}{p{5.8cm}p{2cm}p{5.8cm}}
\begin{equation} \eqlabel{alfa'-YD}
\gbeg{4}{8}
\got{1}{B} \got{5}{B_{\alpha}'} \gnl
\gcn{2}{2}{1}{5} \gvac{1} \gcl{2} \gnl \gnl
\gvac{2} \glm \gnl
\gvac{2} \glcm \gnl
\gcn{2}{2}{5}{1} \gvac{1} \gcl{2} \gnl \gnl
\gob{1}{B} \gob{5}{B_{\alpha}'}
\gend=
\gbeg{6}{9}
\gvac{1} \got{2}{B} \got{4}{B_{\alpha}'} \gnl
\gvac{1} \gcmu \gcn{1}{1}{4}{4} \gnl
\gvac{1} \gcn{1}{1}{1}{0} \hspace{-0,21cm} \gcmu \glcm \gnl
\gvac{1} \gbmp{\hspace{0,14cm}\alpha^{\hspace{-0,08cm}-}} \gbr \gcl{1} \gcl{3} \gnl
\gvac{1} \gcl{2} \gmp{+} \gbr \gnl
\gvac{1} \gvac{1} \gbr \gcl{1} \gnl
\gvac{1} \gcn{1}{1}{1}{2} \gmu \glm \gnl
\gvac{2} \hspace{-0,21cm} \gmu \gcn{1}{1}{4}{4} \gnl
\gvac{2} \gob{2}{B} \gob{4}{B_{\alpha}'}
\gend
\end{equation}  & &
\begin{equation} \eqlabel{alfa-YD-'*}
\gbeg{4}{8}
\got{1}{B} \got{5}{B_{\alpha}'^*} \gnl
\gcn{2}{2}{1}{5} \gvac{1} \gcl{2} \gnl \gnl
\gvac{2} \glm \gnl
\gvac{2} \glcm \gnl
\gcn{2}{2}{5}{1} \gvac{1} \gcl{2} \gnl \gnl
\gob{1}{B} \gob{5}{B_{\alpha}'^*}
\gend=
\gbeg{6}{10}
\gvac{1} \got{2}{B} \got{4}{B_{\alpha}'^*} \gnl
\gvac{1} \gcmu \gcn{1}{1}{4}{4} \gnl
\gvac{1} \gcn{1}{1}{1}{0} \hspace{-0,21cm} \gcmu \glcm \gnl
\gvac{1} \gcl{4} \gbr \gcl{2} \gcl{4} \gnl
\gvac{2} \gmp{+} \gcl{1} \gnl
\gvac{2}  \gbmp{\alpha^{\hspace{-0,08cm}-}} \gbr \gnl
\gvac{1} \gvac{1} \gbr \gcl{1} \gnl
\gvac{1} \gcn{1}{1}{1}{2} \gmu \glm \gnl
\gvac{2} \hspace{-0,21cm} \gmu \gcn{1}{1}{4}{4} \gnl
\gvac{2} \gob{2}{B} \gob{4}{B_{\alpha}'^*}
\gend
\end{equation}
\end{tabular}
\end{center}
We will prove that
\begin{equation} \eqlabel{chain-iso}
[B_{\alpha}, \q B_{\alpha}] \crta\ot \crta{[B_{\alpha}, \q B_{\alpha}]} \iso [B_{\alpha}\q\ot B_{\alpha}'^*,
B_{\alpha}\q\ot B_{\alpha}'^*]\quad\textnormal{and}\quad
\crta{[B_{\alpha}, \q B_{\alpha}]} \crta\ot [B_{\alpha}, \q B_{\alpha}]
\iso [B_{\alpha}'^*\q\ot B_{\alpha}, B_{\alpha}'^*\q\ot B_{\alpha}]
\end{equation}
as algebras in ${}_B^B\YD(\E)$. To do this we will construct three algebra isomorphisms in ${}_B^B\YD(\E)$.


\begin{lma} \lelabel{first lema chain}
Let $\theta: B\to [B_{\alpha}, B_{\alpha}]$ be the algebra morphism from \equref{teta} and suppose
that $\Phi_{B,B}$ and $\Phi_{B,E}$ are symmetric for all inner-hom objects $E\in {}_B^B\YD(\E)$.
The morphism
$$\zeta: \crta{[B_{\alpha}, B_{\alpha}]} \to [B_{\alpha}', B_{\alpha}']^{op}
\quad\qquad\textnormal{defined via}\qquad\quad
\zeta=\gbeg{4}{7}
\got{1}{} \got{1}{\crta{E_{\alpha}}} \gnl
\glcm \gnl
\gmp{-} \gcl{1} \gnl
\gibr \gnl
\gcl{1} \gbmp{\theta} \gnl
\gmu \gnl
\gob{2}{(E_{\alpha}')^{op}}
\gend
$$
is an algebra isomorphism in ${}_B^B\YD(\E)$. Here $E_{\alpha}=[B_{\alpha}, B_{\alpha}]$ and
$E_{\alpha}'=[B_{\alpha}', B_{\alpha}']$, and $\crta{E_{\alpha}}$ and $(E_{\alpha}')^{op}$
are the corresponding opposite algebras in the categories ${}_B^B\YD(\E)$ and $\E$, respectively.
\end{lma}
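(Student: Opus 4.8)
The plan is to establish three things about $\zeta$: that it is a morphism in ${}_B^B\YD(\E)$, that it is an algebra morphism from the $\Psi$-opposite algebra $\crta{E_{\alpha}}$ to the $\Phi$-opposite algebra $(E_{\alpha}')^{op}$, and that it is invertible. Several reductions are in force. First, $B_{\alpha}$ and $B_{\alpha}'$ coincide as objects of $\E$ and as left $B$-modules, only the coactions \equref{co-alfa} and \equref{co'-alfa} differing; hence $E_{\alpha}$, $E_{\alpha}'$ and $(E_{\alpha}')^{op}$ all have underlying object $[B,B]$ and carry the same left $B$-action \equref{inner H-mod eval}, and on $(E_{\alpha}')^{op}$ the left $B$-comodule structure is taken to be the braided analogue of \equref{H-comod end-op} (equivalently, the one transported along the canonical algebra isomorphism $(E_{\alpha}')^{op}\iso[B_{\alpha}'^*,B_{\alpha}'^*]$ in $\E$); this is what makes $(E_{\alpha}')^{op}$ an object of ${}_B^B\YD(\E)$. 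Second, being an algebra morphism out of the Hopf algebra $B$, the morphism $\theta$ is convolution invertible, and \equref{teta} says precisely that $\theta$ encodes the $B$-action \equref{alfa} on $B_{\alpha}$; together with the antipode axioms this will be the workhorse.

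I would first dispose of invertibility. The morphism $\zeta$ is a ``coaction-twisted translation'' of $[B,B]$: it coacts on the argument by $\lambda_{E_{\alpha}}$, feeds the resulting $B$-leg through $\theta\circ S^{-1}$, and multiplies. Its two-sided inverse is given by the diagram of the same shape with $S$ in place of $S^{-1}$ and with the $B$-coaction of $(E_{\alpha}')^{op}$ in place of $\lambda_{E_{\alpha}}$. That the two composites reduce to the identity is a routine diagram manipulation: one uses coassociativity of the two coactions, the antipode and counit axioms, and the multiplicativity of $\theta$, whereupon the two coaction twists cancel against one another through the antipode.

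Next, that $\zeta$ is a morphism in ${}_B^B\YD(\E)$. For $B$-linearity I would substitute the explicit action \equref{inner H-mod eval} on $E_{\alpha}$ on the source side and the analogue of \equref{H-mod end-op} on $(E_{\alpha}')^{op}$ on the target side, carry the incoming $B$-leg across the coaction by the Yetter--Drinfel'd identities \equref{alfa-YD}, \equref{alfa'-YD} (and their starred companions \equref{alfa*-YD}, \equref{alfa-YD-'*} after the identification $[B_{\alpha},B_{\alpha}]\iso B_{\alpha}\ot B_{\alpha}^*$), and collapse using \equref{teta}; the symmetricity of $\Phi_{B,B}$ and of $\Phi_{B,E_{\alpha}}$ serves to interchange $\Phi$ and $\Phi^{-1}$ where the two sides were set up with opposite braidings. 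The delicate point is $B$-colinearity: one computes $\lambda_{(E_{\alpha}')^{op}}\circ\zeta$ from the analogue of \equref{H-comod end-op} and $(B\ot\zeta)\circ\lambda_{E_{\alpha}}$ from \equref{H-comod-end-eq}, and reconciles the two by coassociativity together with a double insertion of the antipode identity --- the same device used in the proofs of \equref{alfa*-YD} and \equref{alfa-YD-'*} --- once more relying on $\Phi_{B,B}$ symmetric to align the braidings.

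Finally, the algebra morphism property. I would expand $\zeta\circ\nabla_{\crta{E_{\alpha}}}$ with the $\Psi$-opposite product read off \equref{br in YD}, push $\lambda_{E_{\alpha}}$ past the two tensor factors using that $\lambda_{E_{\alpha}}$ is a $B$-comodule algebra structure on $[B,B]$ together with the Yetter--Drinfel'd compatibility, and contract by coassociativity and multiplicativity of $\theta$: the two $\theta\circ S^{-1}$-twists combine, the two copies of $\Psi$ reorganise into the single braiding $\Phi_{E_{\alpha}',E_{\alpha}'}$ of $\E$ that defines the product of $(E_{\alpha}')^{op}$, and one arrives at $\nabla_{(E_{\alpha}')^{op}}\circ(\zeta\ot\zeta)$. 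Preservation of the unit is immediate from $\lambda_{E_{\alpha}}\circ\eta_{E_{\alpha}}=\eta_B\ot\eta_{E_{\alpha}}$, $S^{-1}\circ\eta_B=\eta_B$ and $\theta\circ\eta_B=\eta_{E_{\alpha}}$. The main obstacle is the interplay of the two ``opposite'' structures --- $\Psi$-opposite on the source, $\Phi$-opposite on the target --- in both the colinearity and the algebra check; both go through precisely because $\Phi_{B,B}$ and $\Phi_{B,E}$ (for inner-hom $E$) are symmetric, which is exactly why these hypotheses are imposed.
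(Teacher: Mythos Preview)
Your approach matches the paper's: verify that $\zeta$ is $B$-linear, $B$-colinear, and multiplicative, then exhibit an explicit inverse. The paper also leaves these verifications at the level of a sketch, so your proposal is at the same level of detail.

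Two points of divergence are worth flagging. First, the paper's inverse
\[
\xi=\nabla_{E_\alpha}\circ(E_\alpha\otimes\theta)\circ\Phi^{-1}\circ\lambda_{(E_\alpha')^{op}}
\]
carries \emph{no} antipode on the $B$-leg, whereas your candidate inserts $S$. Whether your formula coincides with the paper's depends on exactly which $B$-coaction you put on $(E_\alpha')^{op}$; since you choose the braided analogue of \equref{H-comod end-op} rather than simply the coaction inherited from $E_\alpha'$, your extra $S$ may well be compensating for that difference. You should check this explicitly rather than assert that the twists ``cancel through the antipode''.

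Second, and more structurally: the paper does not assume in advance that $(E_\alpha')^{op}$ is an object of ${}_B^B\YD(\E)$. It equips $(E_\alpha')^{op}$ with $B$-module and $B$-comodule structures, proves $\zeta$ is $B$-linear and $B$-colinear with respect to these, and then \emph{deduces} Yetter--Drinfel'd compatibility by transport along $\zeta$ from the known YD algebra $\crta{E_\alpha}$. You instead assert the YD structure of $(E_\alpha')^{op}$ at the outset, justifying it by transport along $(E_\alpha')^{op}\iso[B_{\alpha}'^*,B_{\alpha}'^*]$. That isomorphism is exactly the content of \leref{second lema chain}, which in the paper comes \emph{after} the present lemma; so your ordering is circular unless you independently verify that your chosen $B$-structures on $(E_\alpha')^{op}$ satisfy \equref{left YD}. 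The paper's logic avoids this by making the YD property of $(E_\alpha')^{op}$ a conclusion rather than a hypothesis.
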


\begin{proof}
We only sketch the proof. One proves that $\zeta$ is an $H$-module and $H$-comodule algebra morphism.
Similarly, one proves that the morphism
$$\xi: [B_{\alpha}', B_{\alpha}']^{op} \to \crta{[B_{\alpha}, B_{\alpha}]}
\quad\qquad\textnormal{defined by}\qquad\quad
\xi=\gbeg{4}{6}
\got{1}{} \got{1}{(E_{\alpha}')^{op}} \gnl
\glcm \gnl
\gibr \gnl
\gcl{1} \gbmp{\theta} \gnl
\gmu \gnl
\gob{2}{\crta{E_{\alpha}}}
\gend
$$
is $B$-colinear. Then it is easy to see that $\zeta$ and $\xi$ are inverse to each other.
Since $\zeta$ is a $B$-linear and $B$-colinear algebra isomorphism and we know that
$[B_{\alpha}, B_{\alpha}]$ and hence $\crta{[B_{\alpha}, B_{\alpha}]}$ is a Yetter-Drinfel'd module
algebra, it follows that $[B_{\alpha}', B_{\alpha}']^{op}$ is a Yetter-Drinfel'd module algebra, too.
\qed\end{proof}

\begin{lma} \lelabel{second lema chain}
It is:
$$[B_{\alpha}', B_{\alpha}']^{op} \iso [B_{\alpha}'^*, B_{\alpha}'^*]$$
as Yetter-Drinfel'd module algebras.
\end{lma}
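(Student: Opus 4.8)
The statement asserts that for any faithfully projective object $M$ in a closed braided monoidal category, the algebra $[M,M]^{op}$ (the opposite in $\E$ of the inner-endomorphism algebra) is isomorphic to $[M^*,M^*]$. The natural candidate for the isomorphism is ``transpose'': a morphism $f\colon M\to M$ induces $f^*\colon M^*\to M^*$, and transposition reverses composition, hence turns the opposite multiplication into the ordinary one. Concretely, I would first recall (it is stated in the excerpt) that for a finite object $M$ one has $M^*=[M,I]$, that $ev_{M^*}\colon M^*\ot M\to I$ together with a coevaluation $c_M\colon I\to M\ot M^*$ exhibits $M^*$ as a dual object, and that $(-)^*$ on morphisms satisfies $ev_{M^*}(f^*\ot M)=ev_{M^*}(M^*\ot f)$. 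Because $M$ is faithfully projective it is in particular finite, and $M^*$ is again finite with $(M^*)^*\iso M$, so $[M^*,M^*]$ makes sense and the duality is reflexive.

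**Construction of the map.** I would define $\kappa\colon [M,M]^{op}\to[M^*,M^*]$ by the universal property of $ev_{[M^*,M^*]}$, declaring $\kappa$ to be the unique morphism with
\[
ev_{[M^*,M^*]}(\kappa\ot M^*)=\Bigl(M^*\ot ev_{[M,M]}\Bigr)\circ\Bigl(\Phi_{[M,M],M^*}^{-1}\ot M\Bigr)\circ\bigl([M,M]\ot (M^*\ot c_M)\bigr)\ \text{followed by }(e_M\ot M^*),
\]
i.e. $\kappa$ sends (the name of) an endomorphism to (the name of) its transpose. In braided-diagram terms this is just the picture that takes $E:=[M,M]$, bends the two $M^*$-legs around using $c_M$ and $e_M$, and plugs the middle $M$-string into $ev_{[M,M]}$, inserting one braiding $\Phi^{-1}$ to move the $E$-string past the created $M^*$-string. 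The inverse $\kappa^{-1}\colon [M^*,M^*]\to[M,M]^{op}$ is the analogous ``transpose back'', using $(M^*)^*\iso M$; that $\kappa$ and $\kappa^{-1}$ are mutually inverse follows from the two triangle identities $(M\ot e_M)(c_M\ot M)=\id_M$ and $(e_M\ot M^*)(M^*\ot c_M)=\id_{M^*}$ and the reflexivity of the dual.

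**Checking the algebra and Yetter--Drinfel'd structure.** The substance is to verify that $\kappa$ is multiplicative for the \emph{opposite} product on the source and the ordinary product $\Fi_{M^*,M^*,M^*}$ on the target, and unital. Multiplicativity reduces, after composing with $ev_{[M^*,M^*]}$ and using the defining identity $ev_{[M^*,Z]}(\Fi_{M^*,Y,Z}\ot M^*)=ev_{[Y,Z]}([Y,Z]\ot ev_{[M^*,Y]})$, to the elementary fact that $(g\circ f)^*=f^*\circ g^*$ together with one application of naturality of the braiding to commute the two $E$-strings (this is exactly where the braiding enters, and no symmetricity hypothesis is needed here since only one object, $[M,M]$, braids past copies of $M^*$ and $M$). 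Unitality is $ (\id_M)^*=\id_{M^*}$, i.e. $\kappa\,\eta_{[M,M]}=\eta_{[M^*,M^*]}$. Finally, since we already know from \leref{first lema chain} (applied with $B_\alpha'$ in place of $M$) that $[B_{\alpha}',B_{\alpha}']^{op}$ is a Yetter--Drinfel'd $B$-module algebra, and since the $B$-module and $B$-comodule structures on $[M^*,M^*]$ are given by \equref{inner H-mod eval} and \equref{H-comod-end-eq} on $M^*$ (with $M^*$ carrying the structures \equref{alfa-YD-'*}), it remains to check that $\kappa$ is $B$-linear and $B$-colinear. This is the only place the symmetricity of $\Phi_{B,B}$ (and $\Phi_{B,E}$ for the relevant inner-hom objects) is used, precisely as in \leref{first lema chain}: one traces the definitions of the actions/coactions through the transpose formula, moving the single $B$-string past the coevaluation/evaluation strings using naturality and the hypothesis $\Phi_{B,-}=\Phi_{-,B}^{-1}$, and the two Yetter--Drinfel'd identities \equref{alfa'-YD} and \equref{alfa-YD-'*} relate the $M$-side and $M^*$-side computations.

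**Main obstacle.** The conceptual content is routine duality; the real work is bookkeeping the braidings in the diagrammatic verification of $B$-colinearity, where one must push a $B$-comodule coaction through the cap $c_M$ and cup $e_M$ defining the transpose. I expect that to be the delicate step, and — exactly as the authors do for $\zeta$ and $\xi$ in \leref{first lema chain} — I would only sketch it, noting that the symmetricity of $\Phi_{B,B}$ is what makes the two sides match, and that the multiplicativity/unitality of $\kappa$ together with the already-established fact that the source is a Yetter--Drinfel'd module algebra then forces the target to be one as well.
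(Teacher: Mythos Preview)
Your approach via the explicit braided transpose is essentially correct, but the paper proceeds differently: it does not construct $\kappa$ by hand. Instead it cites \cite[Corollary~2.2, parts (2) and (3)]{VZ1}, where the isomorphism $[M,M]^{op}\iso[M^*,M^*]$ for $M$ faithfully projective is obtained from the canonical Morita contexts in $\E$. The paper then observes that, with the $B$-structures \equref{inner H-mod eval} and \equref{H-comod-end-eq}, these Morita contexts actually live in ${}_B\E$ and in ${}^B\E$, so the algebra isomorphism they produce is automatically $B$-linear and $B$-colinear, hence an isomorphism of Yetter--Drinfel'd module algebras. Your route is more elementary and self-contained --- it makes the map concrete and avoids the external dependency --- at the cost of the diagrammatic bookkeeping you describe (and note that your displayed formula for $\kappa$ does not quite typecheck as written, though the intended braided transpose is clear). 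The paper's route is terser because both the algebra isomorphism and its compatibility with the $B$-structures come packaged in the Morita machinery already developed in \cite{VZ1}.
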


\begin{proof}
By the parts 3) and 2) of \cite[Corollary 2.2]{VZ1} we have that $[B_{\alpha}', B_{\alpha}']^{op}
\iso [B_{\alpha}'^*, B_{\alpha}'^*]$ as algebras in $\E$. The proof of this relies on the canonical
Morita contexts in $\E$. Though, with the structures \equref{inner H-mod eval} and \equref{H-comod-end-eq}
we have that the canonical Morita contexts are indeed contexts in ${}^B\E$ and ${}_B\E$, so that
$[B_{\alpha}', B_{\alpha}']^{op}\iso [B_{\alpha}'^*, B_{\alpha}'^*]$ as $B$-module and comodule algebras,
hence as Yetter-Drinfel'd module algebras.
\qed\end{proof}

\begin{lma} \lelabel{iso for N=B-alfa}
Let $M$ be a faithfully projective left $B$-module and a left $B$-comodule in $\E$ such that $[M,M]$
is an algebra in ${}_B ^B\YD(\E)$ and let $N$ be $B_{\alpha}$ or $B_{\alpha}'^*$. Assume that $\Phi_{B,B}$
and $\Phi_{B,E}$ are symmetric for all inner-hom objects $E\in {}_B^B\YD(\E)$. Then there is an
algebra isomorphism
$$[M,M] \crta{\ot} [N,N] \iso [M\ot N, M\ot N]$$
in ${}_B ^B\YD(\E)$.
\end{lma}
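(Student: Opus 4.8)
The plan is to take the algebra isomorphism $\omega: [M,M]\ot[N,N]\to[M\ot N,M\ot N]$ in $\E$ from \cite[Proposition 2.3]{VZ1}, recalled in \equref{omega}, and show that it is also $B$-linear and $B$-colinear when the source is equipped with the $\crta\ot$-structure of ${}_B^B\YD(\E)$ and the target with the inner-hom structures \equref{inner H-mod eval} and \equref{H-comod-end-eq}. Note first that this requires knowing that $[M\ot N, M\ot N]$ really is an object of ${}_B^B\YD(\E)$; since $N$ is either $B_{\alpha}$ or $B_{\alpha}'^*$, the object $M\ot N$ is a faithfully projective left $B$-module and left $B$-comodule, and by the discussion following \equref{br in YD} (together with \equref{alfa-YD}, \equref{alfa*-YD}, \equref{alfa'-YD}, \equref{alfa-YD-'*}) its inner-hom object is indeed a Yetter-Drinfel'd module algebra, using that $\Phi_{B,B}$ and $\Phi_{B,E}$ are symmetric for inner-hom $E$.

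The module compatibility is the easier half. First I would record that the $B$-action on the tensor product $[M,M]\crta\ot[N,N]$ in ${}_B^B\YD(\E)$ is $(\mu_{[M,M]}\ot\mu_{[N,N]})(B\ot\Phi_{B,[M,M]}\ot[N,N])(\Delta_B\ot[M,M]\ot[N,N])$, and that by \leref{strongly inner} both $[M,M]$ and $[N,N]$ carry strongly inner $B$-actions with algebra morphisms $\teta_M$ and $\teta_N$ — for $[M,M]$ this is the hypothesis that $[M,M]\in{}_B^B\YD(\E)$, while for $[N,N]$ it follows because $N=B_{\alpha}$ or $N=B_{\alpha}'^*$ is a $B$-module and \leref{strongly inner}(1) applies. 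Then \equref{teta-tensor} gives $\teta_{M\ot N}=\omega(\teta_M\ot\teta_N)\Delta_B$, which says precisely that $\omega$ intertwines the strongly inner $B$-action on the left-hand side with the one on $[M\ot N,M\ot N]$; hence $\omega$ is $B$-linear. (One also has to observe that the $B$-action transported through $\omega$ agrees with the one coming from $\mu_{M\ot N}$ via \equref{teta}, but this is exactly what \equref{teta} for $M\ot N$ asserts.)

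The comodule compatibility is where the real work lies, and I expect this to be the main obstacle. Here one must show
\[
\lambda_{[M\ot N,\,M\ot N]}\circ\omega=(B\ot\omega)\circ\lambda_{[M,M]\crta\ot[N,N]},
\]
where the left coaction on the target is \equref{H-comod-end-eq} applied to $M\ot N$, and the coaction on the source is the $\crta\ot$-coaction in ${}_B^B\YD(\E)$, namely $(\nabla_B\ot[M,M]\ot[N,N])(B\ot\Psi^{-1}_{?}\ot\dots)(\lambda_{[M,M]}\ot\lambda_{[N,N]})$ built from \equref{H-comod-end-eq} on each factor and the braiding $\Psi$ of \equref{br in YD}. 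The strategy is to verify this equality after post-composing with $ev_{[M\ot N,M\ot N]}$, i.e. on the level of $-\ot(M\ot N)$, so that one may use the defining relations of $\omega$ and of the coactions \equref{H-comod-end-eq}, \equref{inner H-mod eval}, and then invoke the universal property of $ev_{[M\ot N,M\ot N]}$ to conclude. This reduces the claim to a braided-diagram identity involving $\lambda_M$, $\lambda_N$, $ev_{[M,M]}$, $ev_{[N,N]}$, $\Phi_{[N,N],M}$, the antipode $S$, and the symmetry $\Phi_{B,B}=\Phi_{B,B}^{-1}$ together with $\Phi_{B,[N,N]}$ symmetric; the diagram is pushed through by sliding the coevaluation/evaluation pairs, applying naturality of $\Phi$ past the string carrying $[N,N]$, and using the antipode axiom once. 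Since $[M,M]$ and $[M\ot N,M\ot N]$ are already known to be Yetter-Drinfel'd module algebras and $\omega$ is a $B$-linear $B$-colinear algebra isomorphism, the Yetter-Drinfel'd compatibility on the source is automatic and we are done. I would present the module part in full and merely sketch the colinearity diagram, in keeping with the paper's stated policy of abbreviating such computations.
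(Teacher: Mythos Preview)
Your approach has two genuine gaps, and the paper's proof takes a substantially different route precisely because of them.

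First, the map $\omega$ of \equref{omega} is an algebra isomorphism from $[M,M]\ot[N,N]$ with the $\Phi$-twisted multiplication of $\E$, but the source in the lemma is $[M,M]\,\crta\ot\,[N,N]$, whose multiplication is twisted by the braiding $\Psi$ of ${}_B^B\YD(\E)$ from \equref{br in YD}. These are different algebras, and nothing you wrote explains why $\omega$ would be multiplicative for the $\Psi$-product. The paper therefore does not use $\omega$ at all: it introduces $\crta\omega$, formally equal to the analogue $\omega^{\dagger}$ built with $\Psi_{[N,N],M}$, which \emph{is} an algebra morphism out of $[M,M]\,\crta\ot\,[N,N]$.

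Second, your $B$-linearity argument does not go through. The identity $\teta_{M\ot N}=\omega(\teta_M\ot\teta_N)\Delta_B$ of \equref{teta-tensor} only relates the three algebra morphisms $\teta$; it does \emph{not} say that $\omega$ intertwines the two strongly inner actions. Already in the classical case $\E=\mathrm{Vec}$ one checks that $\omega((h_{(1)}\!\cdot\! f)\ot(h_{(2)}\!\cdot\! g))$ evaluated at $m\ot n$ gives $h_{(1)}f(S(h_{(2)})m)\ot h_{(3)}g(S(h_{(4)})n)$, whereas $(h\cdot\omega(f\ot g))(m\ot n)=h_{(1)}f(S(h_{(4)})m)\ot h_{(2)}g(S(h_{(3)})n)$; these differ for non-cocommutative $H$. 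The paper explicitly records that $\crta\omega$ is \emph{not} $B$-linear for the natural target action, and this is the crux of the argument. What the paper does instead is: (i) prove $\crta\omega$ is $B$-\emph{colinear}, using the twisted Yetter--Drinfel'd identities \equref{alfa-YD} and \equref{alfa-YD-'*} specific to $N=B_\alpha$ or $B_\alpha'^*$; (ii) build an auxiliary $B$-linear algebra isomorphism $\tilde\sigma=\omega_{N,M}\sigma:[M,M]\,\crta\ot\,[N,N]\to[N\ot M,N\ot M]$, where $\sigma$ is defined via the coaction on $[M,M]$ and $\teta_N$; (iii) transport the $B$-action along $\crta\omega\,\tilde\sigma^{-1}$ to equip $[M\ot N,M\ot N]$ with a \emph{new} strongly inner $B$-action (via $\crta\omega\,\tilde\sigma^{-1}\teta_{N\ot M}$) for which $\crta\omega$ becomes $B$-linear by construction. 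The Yetter--Drinfel'd compatibility on the target then follows because $\crta\omega$ is now a $B$-linear, $B$-colinear algebra isomorphism from a known YD algebra.

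In short, you have the colinearity/linearity roles reversed and have overlooked that the target's $B$-action must be modified; your claim that $[M\ot N,M\ot N]$ is a priori in ${}_B^B\YD(\E)$ is also unjustified --- in the paper this is a \emph{conclusion}, not an input.
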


\begin{proof}
Let $\omega_{M,N}: [M,M] \ot [N,N] \to [M\ot N, M\ot N]$ be the
algebra isomorphism \equref{omega}. Let $\omega^{\dagger}: [M,M] \crta{\ot} [N,N] \to [M\crta{\ot} N, M\crta{\ot} N]$
denote the corresponding algebra isomorphism in ${}_B ^B\YD(\E)$, given by
$ev_{[M\crta\ot N, M\crta\ot N]}(\omega^{\dagger}\crta\ot M\crta\ot N)=(ev_{[M,M]}\crta\ot ev_{[N,N]})
([M,M] \crta\ot \Psi_{[N,N],M}\crta\ot N)$,
where $\Psi$ is the braiding in ${}_B ^B\YD(\E)$. Furthermore, consider the (iso)morphism
$\crta{\omega}: [M,M] \crta{\ot} [N,N] \to [M\ot N, M\ot N]$ formally equal to $\omega^{\dagger}$. Note that
the compatibility of both $\crta{\omega}$ and $\omega^{\dagger}$ with multiplication means one and the same
thing, while their compatibilities with left $B$-module and comodule structures comprehend different
identities. This is own to different $B$-module and comodule structures of $M\ot N$ and $M\crta{\ot} N$,
and hence of their corresponding inner-hom objects.
Using \equref{alfa-YD} and \equref{alfa-YD-'*} and that $\Phi_{B,[N,N]}$ is symmetric, one may prove that $\crta{\omega}$ is left
$B$-colinear, and thus a $B$-comodule algebra isomorphism. 
It is not $B$-linear, however, we will construct a new $B$-linear algebra isomorphism
$\tilde{\sigma}: [M,M] \crta{\ot} [N,N]\to [N\ot M, N\ot M]$ so that the 
$B$-action on $[N\ot M, N\ot M]$ pulled along $\crta{\omega}\tilde{\sigma}^{-1}$
provides $[M\ot N, M\ot N]$ with a new $B$-action 
with respect to which $\crta{\omega}$ becomes a $B$-module morphism.

By the assumption the $B$-action on $[M,M]$ is strongly inner and so far we know that so is the
$B$-action on $[N,N]$, for $N$ being both $B_{\alpha}$ and $B_{\alpha}'^*$. Let $\teta_M, \teta_N$
and $\teta_{M\ot N}$ be the corresponding algebra morphisms. Now consider $\sigma:
[M,M] \crta{\ot} [N,N] \to [N,N]\ot [M,M]$ defined by:
$$
\sigma=
\gbeg{3}{5}
\got{1}{} \got{1}{E_M} \got{1}{\hspace{0,16cm} E_N} \gnl
\glcm \gcl{1} \gnl
\gbmp{\hspace{0,16cm}\theta_{\q N}} \gbr \gnl
\gmu \gcl{1} \gnl
\gob{2}{E_N} \gob{1}{\hspace{0,16cm} E_M}
\gend\quad\textnormal{with the inverse}\quad
\sigma^{-1}=\scalebox{0.9}[0.9]{
\gbeg{4}{8}
\gvac{1} \got{1}{E_N} \got{1}{\hspace{0,16cm} E_M} \gnl
\gvac{1} \gibr \gnl
\glcm \gcl{4} \gnl
\gmp{-} \gcl{2} \gnl
\gbmp{\hspace{0,16cm}\theta_{\q N}} \gnl
\gibr \gnl
\gcl{1} \gmu \gnl
\gob{1}{E_M} \gob{2}{\hspace{0,16cm} E_N}
\gend}
$$
Here $E_M=[M,M]$ and $E_N=[N,N]$.
This is shown to be an algebra morphism without any symmetricity assumptions on the braiding.
Then $\tilde{\sigma}:=\omega_{N,M}\sigma: [M,M] \crta{\ot} [N,N] \to [N\ot M,N\ot M]$ is an
algebra morphism, too. One proves that $\tilde{\sigma}$ is $B$-linear using \equref{teta-tensor}
and that $\Phi_{E_M, B}$ is symmetric. 
Then the $B$-action on $[M\ot N,M\ot N]$ becomes strongly inner via the algebra morphism
$\crta{\omega}\tilde{\sigma}^{-1}\theta_{N\ot M}: B\to [M\ot N,M\ot N]$. 
With this new action $\crta{\omega}$ becomes $B$-linear by the construction. Thus it is a
$B$-module and comodule algebra isomorphism which makes $[M\ot N,M\ot N]$ a Yetter-Drinfel'd module.
\qed\end{proof}

Setting first $M=B_{\alpha}$ and $N=B_{\alpha}'^*$ and then $M=B_{\alpha}'^*$ and $N=B_{\alpha}$,
the subsequent applications of \leref{first lema chain}, \leref{second lema chain} and
\leref{iso for N=B-alfa} yield \equref{chain-iso}. Then the Yetter-Drinfel'd module algebras
$[B_{\alpha}, \q B_{\alpha}] \crta\ot \crta{[B_{\alpha}, \q B_{\alpha}]}$ and
$\crta{[B_{\alpha}, \q B_{\alpha}]} \crta\ot [B_{\alpha}, \q B_{\alpha}]$ are Azumaya algebras
in $\E$.

\subsection{A group map to the quantum Brauer group}

Let $\alpha, \beta\in\Aut(\E; B)$. We define $K=B_{\alpha} \ot B_{\beta}$ and $L=K\ot B_{\gamma}$,
with $\gamma=(\beta\alpha)^{-1}$, both with the usual left $B$-comodule structures, whereas their
structures of left $B$-modules are given by:
$$\gbeg{4}{4}
\got{1}{B} \got{3}{B_{\alpha} \ot B_{\beta}} \gnl
\gcn{2}{1}{1}{3} \gcl{1} \gnl
\gvac{1} \glm \gvac{1} \gnl
\gvac{1} \gob{3}{B_{\alpha} \ot B_{\beta}}
\gend=
\gbeg{4}{6}
\got{2}{B} \got{1}{B_{\alpha}} \got{1}{B_{\beta}} \gnl
\gcmu \gcl{1} \gcl{3} \gnl
\gcl{1} \gbr \gnl
\gcl{1} \gcl{1} \gbmp{\alpha} \gnl
\glm \glm \gnl
\gvac{1}\gob{1}{B_{\alpha}} \gvac{1}\gob{1}{B_{\beta}}
\gend
\quad\qquad\textnormal{and}\qquad\quad
\gbeg{4}{4}
\got{1}{B} \got{3}{K \ot B_{\gamma}} \gnl
\gcn{2}{1}{1}{3} \gcl{1} \gnl
\gvac{1} \glm \gvac{1} \gnl
\gvac{1} \gob{3}{K \ot B_{\gamma}}
\gend=
\gbeg{4}{6}
\got{2}{B} \got{1}{K} \got{1}{B_{\gamma}} \gnl
\gcmu \gcl{1} \gcl{3} \gnl
\gcl{1} \gbr \gnl
\gcl{1} \gcl{1} \gbmp{\hspace{0,1cm}\gamma^{\hspace{-0,08cm}-}} \gnl
\glm \glm \gnl
\gvac{1}\gob{1}{K} \gvac{1}\gob{1}{B_{\gamma}}
\gend
$$
respectively. Then, inserting the antipode rule at an appropriate place and using that $\Phi_{B,B}$
is symmetric, one may show that $K$ obeys:
\begin{equation} \eqlabel{B-alfa-beta-YD}
\gbeg{4}{8}
\got{1}{B} \got{5}{K} \gnl
\gcn{2}{2}{1}{5} \gvac{1} \gcl{2} \gnl \gnl
\gvac{2} \glm \gnl
\gvac{2} \glcm \gnl
\gcn{2}{2}{5}{1} \gvac{1} \gcl{2} \gnl \gnl
\gob{1}{B} \gob{5}{K}
\gend=
\gbeg{6}{10}
\gvac{1} \got{2}{B} \got{4}{K} \gnl
\gvac{1} \gcmu \gcn{1}{1}{4}{4} \gnl
\gvac{1} \gcn{1}{1}{1}{0} \hspace{-0,21cm} \gcmu \glcm \gnl
\gvac{1} \gcl{4} \gbr \gcl{2} \gcl{4} \gnl
\gvac{2} \gbmp{\beta\alpha} \gcl{1} \gnl
\gvac{2}  \gmp{+} \gbr \gnl
\gvac{1} \gvac{1} \gbr \gcl{1} \gnl
\gvac{1} \gcn{1}{1}{1}{2} \gmu \glm \gnl
\gvac{2} \hspace{-0,21cm} \gmu \gcn{1}{1}{4}{4} \gnl
\gvac{2} \gob{2}{B} \gob{4}{K.}
\gend
\end{equation}
As a byproduct of the proof of \equref{B-alfa-beta-YD} one obtains
an identity which changing $\alpha$ to $\gamma^{-1}$ and $\beta$ to $\gamma$ yields that
$L$ is a Yetter-Drinfel'd module.
This means that $[L,L]$ represents the unit in the quantum Brauer group $\BQ(\E; B)$. Similarly,
putting $\beta=\alpha^{-1}$ we conclude from \equref{B-alfa-beta-YD} that $B_{\alpha}\ot B_{\alpha^{-1}}$
is a Yetter-Drinfel'd module, hence $[B_{\alpha}\ot B_{\alpha^{-1}}, B_{\alpha}\ot B_{\alpha^{-1}}]$
represents the unit in $\BQ(\E; B)$ as well. Once we prove that $[B_{\alpha}, B_{\alpha}]$ is an
Azumaya algebra in ${}_B ^B\YD(\E)$ for any $\alpha$, by \leref{iso for N=B-alfa} we could draw
the following conclusions on the classes in $\BQ(\E; B)$:
$$[B_{\alpha}, B_{\alpha}] \crta\ot [B_{\alpha^{-1}}, B_{\alpha^{-1}}] \iso [B_{\alpha}\ot B_{\alpha^{-1}},
B_{\alpha}\ot B_{\alpha^{-1}}] \Rightarrow [[B_{\alpha^{-1}}, B_{\alpha^{-1}}]]=[[B_{\alpha}, B_{\alpha}]]^{-1}$$
$$[K,K] \crta\ot [B_{\gamma}, B_{\gamma}] \iso [K\ot B_{\gamma}, K\ot B_{\gamma}]=[L,L] \Rightarrow
[[K,K]]=[[B_{\gamma}, B_{\gamma}]]^{-1}=[[B_{\gamma^{-1}}, B_{\gamma^{-1}}]].$$
Note that the last identity because of the isomorphism $[B_{\alpha}, B_{\alpha}] \crta\ot
[B_{\beta}, B_{\beta}]\iso [B_{\alpha}\ot B_{\beta}, B_{\alpha}\ot B_{\beta}]$
yields:
\begin{equation} \eqlabel{anti-group}
[[B_{\alpha}, B_{\alpha}]] \cdot [[B_{\beta}, B_{\beta}]]= [[B_{\beta\alpha}, B_{\beta\alpha}]].
\end{equation}
When we applied \leref{iso for N=B-alfa}, note that we used that $\Phi_{B,[M,M]}$ is symmetric
for $M$ being $B_{\alpha}$
and $B_{\alpha}\ot B_{\beta}$ for some $\alpha, \beta\in\Aut(\E; B)$. Identifying $[M,M]\iso M\ot M^*$
and having in mind the braiding axiom $\Phi_{B, X\ot Y}=(X\ot\Phi_{B,Y})(\Phi_{B,X}\ot Y)$, observe that
it suffices to assume that $\Phi_{B,B}$ (and $\Phi_{B,B^*}$) is symmetric. 
\par\bigskip

Let us now interpret the above results for $\C={}_H ^H\YD$, assuming that $\Phi_{B,B}$ is symmetric.
Let $\alpha\in\Aut(\C; B)$ and denote $E_{\alpha}=[B_{\alpha}, B_{\alpha}]=\End(B_{\alpha})$. We saw
that it is an algebra in ${}_B^B\YD(\C)\iso {}_{B\rtimes H} ^{B\rtimes H}\YD$.

\begin{prop}
With assumptions as above the algebra $E_{\alpha}=[B_{\alpha}, B_{\alpha}]$ is an Azumaya algebra in
${}_{B\rtimes H} ^{B\rtimes H}\YD$.
\end{prop}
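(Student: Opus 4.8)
The plan is to reduce the statement to the category ${}_B ^B\YD(\C)$ and then apply the known characterization of Azumaya algebras in a braided monoidal category. Since, by \equref{iso double-YD}, ${}_B ^B\YD(\C)\iso {}_{B\rt H} ^{B\rt H}\YD$ as braided monoidal categories, and being an Azumaya algebra is invariant under braided monoidal equivalences, it suffices to prove that $E_{\alpha}=[B_{\alpha}, B_{\alpha}]$ is an Azumaya algebra in ${}_B ^B\YD(\C)$; we already know it is an algebra there. By \cite[Theorem 3.1]{VZ1} (see also \cite[Theorem 5.1]{Femic2}) this amounts to showing that $E_{\alpha}$ is faithfully projective as an object of ${}_B ^B\YD(\C)$ and that the canonical algebra morphisms $\crta F\colon E_{\alpha}\crta\ot \crta{E_{\alpha}}\to \End(E_{\alpha})$ and $\crta G\colon \crta{E_{\alpha}}\crta\ot E_{\alpha}\to\crta\End(E_{\alpha})$ are isomorphisms, where $\End(E_{\alpha})=[E_{\alpha},E_{\alpha}]$ carries the structures \equref{inner H-mod eval} and \equref{H-comod-end-eq}.

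Faithful projectivity is immediate: through the forgetful functor $\U$ the object $\U(E_{\alpha})\iso B_{\alpha}\ot B_{\alpha}^{*}$ is a tensor product of (duals of) the finite object $B$, hence faithfully projective in $\C$, and since all the $\ev$- and $db$-morphisms witnessing this are $B$-linear and $B$-colinear by naturality, $E_{\alpha}$ is faithfully projective in ${}_B ^B\YD(\C)$ as well.

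For the canonical maps I would exploit the work of \ssref{ordinary Azumaya}. Recall from \equref{chain-iso} that the successive applications of \leref{first lema chain}, \leref{second lema chain} and \leref{iso for N=B-alfa} produce algebra isomorphisms
$$E_{\alpha}\crta\ot \crta{E_{\alpha}}\iso [B_{\alpha}\ot B_{\alpha}'^{*}, B_{\alpha}\ot B_{\alpha}'^{*}] \qquad\textnormal{and}\qquad \crta{E_{\alpha}}\crta\ot E_{\alpha}\iso [B_{\alpha}'^{*}\ot B_{\alpha}, B_{\alpha}'^{*}\ot B_{\alpha}]$$
in ${}_B ^B\YD(\C)$, the algebras on the right being endomorphism algebras of the faithfully projective objects $B_{\alpha}\ot B_{\alpha}'^{*}$ and $B_{\alpha}'^{*}\ot B_{\alpha}$. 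Since $B_{\alpha}'^{*}$ and $B_{\alpha}^{*}$ coincide as objects of $\C$, the $db$-isomorphisms identify $[B_{\alpha}\ot B_{\alpha}'^{*}, B_{\alpha}\ot B_{\alpha}'^{*}]$ with $[E_{\alpha},E_{\alpha}]=\End(E_{\alpha})$, and similarly $[B_{\alpha}'^{*}\ot B_{\alpha}, B_{\alpha}'^{*}\ot B_{\alpha}]$ with $\crta\End(E_{\alpha})$. I would then check, by a diagram chase unravelling the universal properties of the evaluation morphisms together with the explicit forms of $\zeta$ and $\xi$ from \leref{first lema chain}, of $\omega^{\dagger}$ and $\tilde\sigma$ from \leref{iso for N=B-alfa}, and of $\theta$ from \equref{teta}, that the composite of the isomorphism of \equref{chain-iso} with these $db$-identifications is exactly the canonical morphism $\crta F$ (respectively $\crta G$). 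This makes $\crta F$ and $\crta G$ isomorphisms, so $E_{\alpha}$ is Azumaya in ${}_B ^B\YD(\C)$ and, by \equref{iso double-YD}, in ${}_{B\rt H} ^{B\rt H}\YD$.

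The main obstacle is precisely this last identification. The isomorphism of \equref{chain-iso} is assembled from pieces of rather different natures — the action-versus-coaction flip $\zeta$, the Morita-context duality $[N,N]^{op}\iso[N^{*},N^{*}]$, and the twisted $\omega$-isomorphism $\crta\omega$ corrected by $\tilde\sigma$ — so verifying that, after the $db$-identifications, their composite is the honest multiplication morphism $E_{\alpha}\crta\ot\crta{E_{\alpha}}\to\End(E_{\alpha})$ requires care. The key point is that each of these morphisms, as well as $\crta F$ itself, is characterized by a universal property with respect to $\ev$, so post-composing with $\ev$ reduces the claim to an equality of morphisms into $B_{\alpha}$ (respectively $B_{\alpha}'^{*}$), which one then settles using the strong innerness of the $B$-actions (\leref{strongly inner}, \equref{teta}, \equref{teta-tensor}) and the symmetricity of $\Phi_{B,B}$ and $\Phi_{B,B^{*}}$.
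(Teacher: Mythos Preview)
Your approach is genuinely different from the paper's, and considerably more ambitious. The paper does \emph{not} try to identify the isomorphism \equref{chain-iso} with the canonical Azumaya map $\crta F$. Instead, it uses \equref{chain-iso} only to conclude that $E_{\alpha}\crta\ot\crta{E_{\alpha}}$ is isomorphic, as an algebra in $\C$, to an endomorphism ring of a finite-dimensional $k$-vector space, and is therefore \emph{simple}. From there the argument is elementary ring theory: if $\sum_i a_i\ot\crta b_i\in\Ker\crta F$, evaluating $\crta F$ at $c=1_{E_\alpha}$ forces $\sum_i a_ib_i=0$; since $\Ker\crta F$ is a two-sided ideal in a simple algebra, it must be zero (the alternative, $\Ker\crta F$ equal to everything, contradicts the test element $a\ot\crta 1$). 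Then a dimension count over $k$ gives surjectivity, and $\crta G$ is treated the same way. This uses in an essential way that we are over a field.

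Your route, by contrast, never leaves the categorical setting and would in principle work over any base, but the step you flag as ``the main obstacle'' is a real gap, not just a routine check. The isomorphism \equref{chain-iso} is assembled from $\zeta$, a Morita duality, and $\crta\omega$ corrected by $\tilde\sigma$; in the proof of \leref{iso for N=B-alfa} the target $[B_\alpha\ot B_\alpha'^*,B_\alpha\ot B_\alpha'^*]$ is equipped with a \emph{modified} $B$-action (pulled back along $\crta\omega\tilde\sigma^{-1}$), so it is not a priori the same Yetter--Drinfel'd module algebra as $\End(E_\alpha)$, and there is no reason to expect the composite to agree with $\crta F$ on the nose. Establishing that equality would be a new theorem, not a diagram chase. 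The paper's simplicity-plus-dimension argument sidesteps this entirely.
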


\begin{proof}
Consider the Azumaya defining morphism $\crta F: E_{\alpha}\crta\ot \crta{E_{\alpha}} \to \End(E_{\alpha})$ in
${}_{B\rtimes H} ^{B\rtimes H}\YD$.
If $\sum_i a_i\ot\crta b_i\in\Ker\crta F$, then for all $c\in E_{\alpha}$ we have:
$\sum_i a_ic_{[0]} (S_{B\rtimes H}^{-1}(c_{[-1]})\cdot_{\hspace{-0,1cm}_{B\rtimes H}} b_i)=0$. Then in particular
for $c=1_{E_{\alpha}}$ we find: $\sum_i a_i b_i=0$. From \ssref{ordinary Azumaya} we know that
$E_{\alpha}\crta\ot \crta{E_{\alpha}}$ is isomorphic to an endomorphism ring in $\C$, hence it is simple. Thus
if $\Ker\crta F\not=\{0\}$
then $\sum_i a_i\ot\crta b_i$ can be arbitrary. Taking $\sum_i a_i\ot\crta b_i=a\ot\crta 1$ for some $a\not= 0$
we get $a=0$, contradiction. Hence $\crta F$ is injective
and since $E_{\alpha}\crta\ot \crta{E_{\alpha}}$ and $\End(E_{\alpha})$ have equal dimensions,
it is an isomorphism. Similarly, one proves that $\crta G$ is an isomorphism, hence
$E_{\alpha}$ is an Azumaya algebra in ${}_{B\rtimes H} ^{B\rtimes H}\YD$. \label{ord.Az proof}
\qed\end{proof}

Now the identity \equref{anti-group} implies: 

\begin{prop}
The map
\begin{equation} \eqlabel{map Pi}
\Pi: \Aut(\C; B) \to \BQ(\C; B), \quad \Pi(\alpha)=[\End(B_{\alpha^{-1}})]
\end{equation}
is a group map.
\end{prop}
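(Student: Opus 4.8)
The plan is to derive the assertion entirely from the ``anti-group'' identity \equref{anti-group}, so that no new braided computation is needed. First I would record that $\Pi$ is well defined: for $\alpha\in\Aut(\C;B)$ the algebra $\End(B_{\alpha^{-1}})=[B_{\alpha^{-1}},B_{\alpha^{-1}}]$ is, by the preceding proposition, an Azumaya algebra in ${}_B^B\YD(\C)\iso {}_{B\rtimes H}^{B\rtimes H}\YD$, so that $[\End(B_{\alpha^{-1}})]$ is a well-defined element of $\BQ(\C;B)=\Br({}_B^B\YD(\C))$ depending only on $\alpha$.

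The key step is multiplicativity, $\Pi(\alpha\beta)=\Pi(\alpha)\,\Pi(\beta)$ for all $\alpha,\beta\in\Aut(\C;B)$. Since $\Pi(\gamma)=[[B_{\gamma^{-1}},B_{\gamma^{-1}}]]$, I would apply \equref{anti-group} with $\alpha$ and $\beta$ there replaced by $\alpha^{-1}$ and $\beta^{-1}$; using $\beta^{-1}\alpha^{-1}=(\alpha\beta)^{-1}$ this yields at once
$$\Pi(\alpha)\cdot\Pi(\beta)=[[B_{\alpha^{-1}},B_{\alpha^{-1}}]]\cdot[[B_{\beta^{-1}},B_{\beta^{-1}}]]=[[B_{\beta^{-1}\alpha^{-1}},B_{\beta^{-1}\alpha^{-1}}]]=[[B_{(\alpha\beta)^{-1}},B_{(\alpha\beta)^{-1}}]]=\Pi(\alpha\beta).$$
Multiplicativity of a map into a group automatically gives $\Pi(\Id_B)=1$ and $\Pi(\alpha^{-1})=\Pi(\alpha)^{-1}$, so $\Pi$ is a group homomorphism. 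The point worth flagging is that the assignment $\gamma\mapsto[[B_{\gamma},B_{\gamma}]]$ is, by \equref{anti-group}, merely an \emph{anti}-homomorphism (the composite enters in the order $\beta\alpha$), and that precomposing it with group inversion on $\Aut(\C;B)$ --- itself an anti-automorphism --- is exactly what converts it into a homomorphism; this is the reason the inverse appears in the definition \equref{map Pi}.

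The main obstacle is not in this final deduction, which is a one-line bookkeeping argument, but upstream, in \equref{anti-group} itself, which I take as given. That identity rests on the chain of algebra isomorphisms \equref{chain-iso} constructed in \ssref{ordinary Azumaya} out of \leref{first lema chain}, \leref{second lema chain} and \leref{iso for N=B-alfa}, together with the Azumaya property of $[B_{\alpha},B_{\alpha}]$ from the preceding proposition. The one subtlety still worth checking here is that no symmetricity hypothesis beyond the standing one ($\Phi_{B,B}$ symmetric) is used: the applications of \leref{iso for N=B-alfa} require $\Phi_{B,[M,M]}$ symmetric for $M=B_{\alpha}$ and $M=B_{\alpha}\ot B_{\beta}$, which via $[M,M]\iso M\ot M^*$ and the braiding axiom $\Phi_{B,X\ot Y}=(X\ot\Phi_{B,Y})(\Phi_{B,X}\ot Y)$ reduces to symmetry of $\Phi_{B,B}$ and $\Phi_{B,B^*}$, and these are equivalent by \leref{symm-conds}.
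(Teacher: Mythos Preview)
Your proposal is correct and follows exactly the paper's approach: the paper simply states that the identity \equref{anti-group} implies the proposition, and you have written out the one-line substitution (replacing $\alpha,\beta$ by $\alpha^{-1},\beta^{-1}$) that the paper leaves implicit. Your additional remarks on well-definedness and on why the inverse in the definition turns the anti-homomorphism into a homomorphism are accurate elaborations of what the paper takes for granted.
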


\par\bigskip

Before proving our main theorem, let us next
show that there is an inner action of $\Aut(\C; B)$ on $\BQ(\C; B)$. Let us go back
for a moment to our general monoidal category $\E$ and a Hopf algebra $B\in\E$. Let $A$ be an Azumaya
algebra in ${}_B^B\YD(\E)$ and take $\alpha\in\Aut(\E; B)$. We define $A(\alpha):=A$ as an algebra with
the $B$-(co)module structures given as follows:
\begin{equation} \eqlabel{A(alfa)}
\gbeg{3}{4}
\got{1}{\hspace{-0,12cm}B} \got{1}{\hspace{0,12cm}A(\alpha)} \gnl
\glm \gnl
\gvac{1} \gcl{1} \gnl
\gvac{1} \gob{1}{A(\alpha)}
\gend=
\gbeg{3}{4}
\got{1}{B} \got{1}{A} \gnl
\gbmp{\alpha} \gcl{1} \gnl
\glm \gnl
\gvac{1} \gob{1}{A}
\gend
\quad\textnormal{and}\quad
\gbeg{3}{4}
\gvac{1} \got{1}{\hspace{0,12cm}A(\alpha)} \gnl
\glcm \gnl
\gcl{1} \gcl{1} \gnl
\gob{1}{\hspace{-0,12cm}B} \gob{1}{\hspace{0,12cm}A(\alpha)}
\gend=
\gbeg{3}{4}
\gvac{1} \got{1}{A} \gnl
\glcm \gnl
\gbmp{\hspace{0,1cm}\alpha^{\hspace{-0,08cm}-}} \gcl{1} \gnl
\gob{1}{B} \gob{1}{A.}
\gend
\end{equation}
Then it is easily seen that $A(\alpha)$ is an algebra in ${}_B^B\YD(\E)$.

\begin{prop} \prlabel{inner action}
For any Azumaya algebra $A$ in ${}_B^B\YD(\E)$ and $\alpha\in\Aut(\E; B)$ we have
$A\crta\ot [B_{\alpha}, B_{\alpha}] \iso [B_{\alpha}, B_{\alpha}] \crta\ot A(\alpha^{-1})$
as algebras in ${}_B^B\YD(\E)$ if $\Phi_{B,B}$ and $\Phi_{B,A}$ are symmetric.
\end{prop}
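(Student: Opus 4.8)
The plan is to exhibit an explicit algebra isomorphism in ${}_B^B\YD(\E)$ between $A\crta\ot [B_{\alpha},B_{\alpha}]$ and $[B_{\alpha},B_{\alpha}]\crta\ot A(\alpha^{-1})$, and the natural candidate is a conjugated version of the braiding $\Psi_{A,[B_{\alpha},B_{\alpha}]}$ in ${}_B^B\YD(\E)$. First I would recall that, by \leref{strongly inner}, the $B$-action on $E_{\alpha}:=[B_{\alpha},B_{\alpha}]$ is strongly inner via the algebra morphism $\teta: B\to E_{\alpha}$ of \equref{teta}; this morphism will be the main tool, exactly as $\theta$ was used in \leref{first lema chain} and in the construction of $\sigma$ in \leref{iso for N=B-alfa}. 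The idea is that twisting the $B$-module structure of $A$ by $\alpha^{-1}$ (to get $A(\alpha^{-1})$) can be compensated, after passing $A$ through $E_{\alpha}$, precisely because the action on $E_{\alpha}$ is implemented by $\teta$ composed with (the antipode and) the comultiplication of $B$; moving the factor $\alpha$ across $\teta$ and absorbing it into the coaction of $A$ produces the $\alpha^{-1}$-twist on the comodule side while the module side of $A$ is untouched. So the candidate map $\Lambda: A\crta\ot E_{\alpha}\to E_{\alpha}\crta\ot A(\alpha^{-1})$ is $\Psi_{A,E_{\alpha}}$ corrected by an insertion of $\teta$ and of $\alpha$ on the strings connecting $A$ and $E_{\alpha}$, together with its evident inverse built from $\teta$, $S_B$ and $\alpha^{-1}$; I would write both in braided diagrams and check $\Lambda^{-1}\Lambda=\id$ by the antipode rule and coassociativity, using that $\Phi_{B,B}$ (hence $\Phi_{B,E_{\alpha}}$, via \leref{symm-conds} applied to $B$ and the identification $E_\alpha\iso B_\alpha\ot B_\alpha^*$) is symmetric.

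Next I would verify the three structural compatibilities in turn. Multiplicativity of $\Lambda$: here the symmetricity of $\Phi_{B,B}$ enters because $A(\alpha^{-1})$ has the same multiplication as $A$ and the multiplication on a $\crta\ot$-tensor product uses the braiding $\Psi$, whose definition \equref{br in YD} involves $\Phi^{-1}$; one slides the $\teta$-insertion past the two multiplications using that $\teta$ is an algebra morphism and that the $B$-action on $A$ and on $E_{\alpha}$ is by module-algebra maps, exactly as in the proof that $\sigma$ in \leref{iso for N=B-alfa} is multiplicative (that part needed no symmetricity, and here too I expect the algebra-morphism property of $\teta$ to do the work). $B$-colinearity of $\Lambda$: this is where the $\alpha^{-1}$-twist is forced — comparing the adjoint-type coaction \equref{H-comod-end-eq} on $E_{\alpha}$ with the coaction \equref{A(alfa)} on $A(\alpha^{-1})$, one sees that pulling the coaction of $A\crta\ot E_{\alpha}$ through $\Lambda$ produces precisely $\alpha^{-1}$ on the $B$-leg entering $A$, which is the definition of $A(\alpha^{-1})$; this computation uses that $\Phi_{B,A}$ is symmetric (to commute the coaction of $A$ past the braiding correctly) and the Yetter--Drinfel'd condition \equref{left YD} for $A$. $B$-linearity of $\Lambda$: using that the action on $E_{\alpha}$ is $\teta$-inner, i.e. identity \equref{inner H-mod eval}, and that $\Phi_{B,B}$ is symmetric so that \equref{teta-tensor} / the analogue of \equref{H-mod-end-eq} holds, the $B$-action on $A\crta\ot E_{\alpha}$ transported along $\Lambda$ matches the $B$-action on $E_{\alpha}\crta\ot A(\alpha^{-1})$; the twist by $\alpha$ in the module structure of $A(\alpha^{-1})$ cancels against the $\alpha^{-1}$ that appeared on the colinearity side, leaving the correct action.

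I expect the main obstacle to be the $B$-linearity (and, intertwined with it, getting the precise placement of $\alpha$ vs.\ $\alpha^{-1}$ right): one must simultaneously track how $\teta$ interacts with $\Delta_B$ and $S_B$ in \equref{inner H-mod eval}, how the braiding $\Psi$ in ${}_B^B\YD(\E)$ unpacks via \equref{br in YD} into the underlying $\Phi$ and the $B$-(co)actions, and how the twist in \equref{A(alfa)} shifts the action by $\alpha$; a sign/inverse error here would swap $A(\alpha)$ for $A(\alpha^{-1})$. The symmetricity hypotheses $\Phi_{B,B}=\Phi_{B,B}^{-1}$ and $\Phi_{B,A}=\Phi_{A,B}^{-1}$ are used exactly at the points where a $\Phi$ must be replaced by a $\Phi^{-1}$ (coming from the definition of $\crta\ot$ and of the opposite/twisted structures) for the diagrams to match; by \leref{symm-conds} the first of these also gives $\Phi_{B,B^*}$ symmetric, hence $\Phi_{B,E_{\alpha}}$ symmetric through $E_{\alpha}\iso B_{\alpha}\ot B_{\alpha}^*$ and the hexagon, which is what legitimises \equref{inner H-mod eval} and \equref{H-comod-end-eq} for $E_{\alpha}$ in the first place. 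Once $\Lambda$ is shown to be a $B$-module and $B$-comodule algebra isomorphism, it is automatically an isomorphism of Yetter--Drinfel'd module algebras, and since $A\crta\ot E_{\alpha}$ is Azumaya (product of Azumayas, $E_{\alpha}$ being Azumaya by \leref{iso for N=B-alfa}-type arguments together with faithful projectivity of $B_{\alpha}$), so is the right-hand side, completing the proof.
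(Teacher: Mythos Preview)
Your overall plan --- write down an explicit isomorphism and check that it is a $B$-linear, $B$-colinear algebra map --- is exactly the paper's, and the paper is equally terse about the verifications. The implementation, however, differs. The paper does not use $\theta$, nor does it present the map as a deformation of $\Psi_{A,E_\alpha}$. Instead it identifies $[B_\alpha,B_\alpha]\cong B_\alpha\ot B_\alpha^*$ and writes the isomorphism $\varphi$ directly as a single braided diagram on $A\ot B_\alpha\ot B_\alpha^*$: coact on $A$, inverse-braid $A$ past $B_\alpha$, act on $B_\alpha$; then inverse-braid $A$ past $B_\alpha^*$, coact on $B_\alpha^*$, apply $S^{-1}$, braid, apply $\alpha^{-1}$, and act on $A$. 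Under the assumed symmetricities the first half is formally $\Psi^{-1}_{B_\alpha,A}$ and the second half is formally $\Psi_{A,B_\alpha^*}$ with the $A$-action replaced by the $A(\alpha^{-1})$-action --- two \emph{opposite} half-braidings on the two tensor factors of $E_\alpha$, and it is precisely this mismatch that manufactures the $\alpha^{-1}$-twist. Your description as ``a single corrected $\Psi_{A,E_\alpha}$'' does not make this mechanism visible.

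Your $\theta$-route, patterned on the map $\sigma$ from \leref{iso for N=B-alfa}, is a legitimate alternative and has the conceptual advantage of staying at the level of $E_\alpha$ without unpacking it. But you never write $\Lambda$ down, and the crucial step --- where exactly the $\alpha^{-1}$ appears --- remains a promise rather than a computation. Two small corrections: $A(\alpha^{-1})$ has \emph{both} structures twisted (see \equref{A(alfa)}), contrary to your remark that ``the module side of $A$ is untouched''; and your closing sentence about $E_\alpha$ being Azumaya is beside the point, since the proposition asserts only an algebra isomorphism in ${}_B^B\YD(\E)$, and Azumaya-ness of $E_\alpha$ in a general $\E$ is neither established nor needed here.
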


\begin{proof}
As before, we identify $[B_{\alpha}, B_{\alpha}]\iso B_{\alpha} \ot B_{\alpha}^*$. Then one may
prove that the morphism $\Fi: A\crta\ot [B_{\alpha}, B_{\alpha}] \to [B_{\alpha}, B_{\alpha}] \crta\ot A(\alpha^{-1})$
given by:
$$\Fi=
\gbeg{6}{10}
\gvac{1} \got{1}{A} \got{1}{B_{\alpha}} \gvac{1} \got{1}{B_{\alpha}^*} \gnl
\glcm \gcl{1} \gvac{1} \gcl{3} \gnl
\gcl{1} \gibr \gnl
\glm \gcn{1}{1}{1}{3} \gnl
\gvac{1} \gcl{5} \gvac{1} \gibr \gnl
\gvac{2} \glcm \gcl{4} \gnl
\gvac{2} \gmp{-} \gcl{1} \gnl
\gvac{2} \gbr \gnl
\gvac{2} \gcl{1} \glm \gnl
\gvac{1} \gob{1}{B_{\alpha}} \gob{1}{\hspace{0,12cm}B_{\alpha}^*} \gob{3}{\hspace{0,12cm}A(\alpha^{-1})}
\gend=
\gbeg{3}{11}
\gvac{1} \got{1}{A} \got{1}{B_{\alpha}} \gvac{1} \got{1}{B_{\alpha}^*} \gnl
\glcm \gcl{1} \gvac{1} \gcl{3} \gnl
\gcl{1} \gibr \gnl
\glm \gcn{1}{1}{1}{3} \gnl
\gvac{1} \gcl{6} \gvac{1} \gibr \gnl
\gvac{2} \glcm \gcl{4} \gnl
\gvac{2} \gmp{-} \gcl{1} \gnl
\gvac{2} \gbr \gnl
\gvac{2} \gcl{2} \gbmp{\hspace{0,1cm}\alpha^{\hspace{-0,08cm}-}} \gnl
\gvac{3} \glm \gnl
\gvac{1} \gob{1}{B_{\alpha}} \gob{1}{\hspace{0,12cm}B_{\alpha}^*} \gob{3}{A}
\gend
$$
is left $B$-linear and colinear and an algebra morphism.
\qed\end{proof}

In the case of $\E=\C$ we may claim the following:

\begin{lma}
Let $A$ and $A'$ be two Azumaya algebras in $\C$ and $\alpha\in\Aut(\C; B)$. Then $A(\alpha)$
is an Azumaya algebra in $\C$ and $A(\alpha)\crta\ot A'(\alpha) \iso (A\crta\ot A')(\alpha)$
as algebras in $\C$.
\end{lma}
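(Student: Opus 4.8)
The plan is to realise the twist $A\mapsto A(\alpha)$ as the value on $A$ of a strict braided monoidal auto-equivalence of ${}_B^B\YD(\C)$; once that is in place, both assertions become formal consequences of the invariance of the Brauer group under braided monoidal equivalences.

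First I would introduce the functor $T_\alpha\colon {}_B^B\YD(\C)\to {}_B^B\YD(\C)$ which sends an object $M$ to $M(\alpha)$ — the object $M$ with its $B$-action precomposed by $\alpha$ and its $B$-coaction postcomposed by $\alpha^{-1}$, exactly as in \equref{A(alfa)} — and which is the identity on morphisms. That this is well defined is the same routine check already used for algebras just before \prref{inner action}: $M(\alpha)$ again satisfies the Yetter--Drinfel'd compatibility (its $H$-structures are untouched), and an $H$-linear, $H$-colinear, $B$-linear, $B$-colinear map stays such after the twist. Since $\alpha$ is invertible in $\Aut(\C;B)$, the functor $T_{\alpha^{-1}}$ is a two-sided inverse of $T_\alpha$, so $T_\alpha$ is an isomorphism of categories.

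Next, and this is the substance of the argument, I would check that $T_\alpha$ respects the monoidal and braided structure, in fact strictly. For the tensor product one has $(M\crta\ot N)(\alpha)=M(\alpha)\crta\ot N(\alpha)$ as objects: on the underlying object of $\C$ and on the $H$-structures there is nothing to do; for the diagonal $B$-action one uses that $\alpha$ is a coalgebra morphism, so $\Delta_B\alpha=(\alpha\ot\alpha)\Delta_B$, together with naturality of $\Phi$ to slide the two resulting copies of $\alpha$ past the braiding $\Phi_{B,M}$ occurring in $\mu_{M\crta\ot N}$; dually, for the codiagonal $B$-coaction one uses that $\alpha^{-1}$ is an algebra morphism, again with naturality of $\Phi$. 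The unit object is fixed because $\varepsilon_B\alpha=\varepsilon_B$ and $\alpha^{-1}\eta_B=\eta_B$. Finally $T_\alpha$ fixes the braiding, $\Psi_{M(\alpha),N(\alpha)}=\Psi_{M,N}$: in the formula \equref{br in YD} the twist enters only through one $B$-coaction (contributing an $\alpha^{-1}$) and one $B$-action (contributing an $\alpha$), joined by a power of the antipode of $B$; since $\alpha$ is a Hopf algebra automorphism one has $\alpha S=S\alpha$, whence $\alpha\circ S^{\pm 1}\circ\alpha^{-1}=S^{\pm 1}$, so the two twists cancel. Thus $T_\alpha$ is an isomorphism of braided monoidal categories.

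Finally I would read off the statement. As a strict monoidal functor, $T_\alpha$ takes the algebra object $A$ (multiplication $\nabla_A$, which is $B$-linear and $B$-colinear for the twisted structures precisely because it is for the original ones) to the algebra object $A(\alpha)$, and it takes $A\crta\ot A'$, whose multiplication involves $\Psi_{A',A}$, to $A(\alpha)\crta\ot A'(\alpha)$, whose multiplication involves $\Psi_{A'(\alpha),A(\alpha)}=\Psi_{A',A}$; hence $A(\alpha)\crta\ot A'(\alpha)=(A\crta\ot A')(\alpha)$, in particular they are isomorphic as algebras in ${}_B^B\YD(\C)$. Since a braided monoidal equivalence carries Azumaya algebras to Azumaya algebras — this is exactly the invariance of the Brauer group recalled in the Preliminaries — $A(\alpha)=T_\alpha(A)$ is Azumaya in ${}_B^B\YD(\C)$ whenever $A$ is. The only real work is in the previous paragraph: transporting the occurrences of $\alpha$ and $\alpha^{-1}$ through the ambient braidings by naturality and through the (co)multiplication of $B$ by the (co)algebra-morphism property, in the explicit diagrams for $\mu_{M\crta\ot N}$, the codiagonal coaction, and $\Psi$. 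This is entirely routine string-diagram bookkeeping, the conceptual point being that twisting by a Hopf algebra automorphism of $B$ is compatible with every piece of structure defining the braided monoidal category ${}_B^B\YD(\C)$.
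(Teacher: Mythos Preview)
Your proof is correct and takes a genuinely different route from the paper's. The paper dispatches the lemma in two lines: for the first assertion it points back to the concrete, element-wise argument used for $E_\alpha=\End(B_\alpha)$ (kernel of $\crta F$, evaluate at $c=1$, invoke simplicity and a dimension count over the field), noting only that ``$\alpha$ preserves the unit''; the second assertion is declared ``straightforward and easy'' with no further comment.

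Your approach is more conceptual and more general. By recognising the twist as a strict braided monoidal auto\-equivalence $T_\alpha$ of ${}_B^B\YD(\C)$, both claims become immediate: Azumaya is preserved because the Brauer group is a braided-monoidal invariant, and the tensor compatibility is an \emph{equality} $T_\alpha(A)\crta\ot T_\alpha(A')=T_\alpha(A\crta\ot A')$ rather than merely an isomorphism. The substantive checks you outline (that $T_\alpha$ is monoidal via $(\alpha\ot\alpha)\Delta_B=\Delta_B\alpha$ and naturality of $\Phi$, and that $\Psi_{M(\alpha),N(\alpha)}=\Psi_{M,N}$ via $\alpha S_B=S_B\alpha$) are exactly the right ones and go through as you say. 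A bonus of your argument is that it does not use the ambient $\C={}_H^H\YD$ over a field at all: it works verbatim for any closed braided monoidal $\E$ satisfying the standing symmetricity hypothesis, whereas the paper's sketch leans on simplicity and finite dimension. Note also that your equality $\crta{A(\alpha)}=\crta A(\alpha)$ (since $\Psi_{A(\alpha),A(\alpha)}=\Psi_{A,A}$) is precisely what is needed, and used without comment, in the proof of \coref{inner action} immediately afterwards.
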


\begin{proof}
The first part is proved similarly as we proved lately 
that $E_{\alpha}=\End(B_{\alpha})$ is an Azumaya algebra in ${}_{B\rtimes H} ^{B\rtimes H}\YD$
as $\alpha$ preserves the unit. The proof of the second part is straightforward and easy.
\qed\end{proof}

\begin{cor} \colabel{inner action}
Assume that $\Phi_{B,B}$ and $\Phi_{B,A}$ are symmetric for all Azumaya algebras $A$ in ${}_B^B\YD(\C)$.
Then there is an inner action
$$\BQ(\C; B)\ot\Aut(\C; B) \to \BQ(\C; B), \quad ([A], \alpha)\mapsto [A(\alpha)]$$
given by \equref{A(alfa)}.
\end{cor}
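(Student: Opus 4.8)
The plan is to obtain this corollary by assembling three facts already established: the preceding lemma (that $A(\alpha)$ is again Azumaya in $\C$ and that the twist $(-)(\alpha)$ is compatible with $\crta\ot$), \prref{inner action}, and the statement that $\Pi$ in \equref{map Pi} is a group morphism. Throughout, the symmetricity hypotheses ``$\Phi_{B,B}$ and $\Phi_{B,A}$ symmetric for every Azumaya $A$ in ${}_B^B\YD(\C)$'' are precisely what is needed to legitimately feed each relevant algebra into \prref{inner action}.

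First I would show that, for fixed $\alpha\in\Aut(\C;B)$, the assignment $[A]\mapsto[A(\alpha)]$ is a well-defined map $\BQ(\C;B)\to\BQ(\C;B)$. By the preceding lemma $A(\alpha)$ is Azumaya, so it has a class. If $A\crta\ot[M,M]\iso A'\crta\ot[N,N]$ with $M,N$ faithfully projective, then applying $(-)(\alpha)$ together with the tensor-compatibility part of the preceding lemma gives $A(\alpha)\crta\ot[M,M](\alpha)\iso A'(\alpha)\crta\ot[N,N](\alpha)$, so it suffices to see that $[M,M](\alpha)$ represents the unit of $\BQ(\C;B)$. For this I would apply \prref{inner action} to the Azumaya algebra $[M,M]$ (admissible since $\Phi_{B,[M,M]}$ is symmetric by hypothesis), obtaining $[M,M]\crta\ot[B_{\alpha},B_{\alpha}]\iso[B_{\alpha},B_{\alpha}]\crta\ot[M,M](\alpha^{-1})$; passing to Brauer classes, using $[[M,M]]=1$, and cancelling $[[B_{\alpha},B_{\alpha}]]$ in the group $\BQ(\C;B)$ forces $[[M,M](\alpha^{-1})]=1$. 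Since this argument runs for every $\alpha\in\Aut(\C;B)$, in particular $[[M,M](\alpha)]=1$. Then $[A(\alpha)]=[A(\alpha)\crta\ot[M,M](\alpha)]=[A'(\alpha)\crta\ot[N,N](\alpha)]=[A'(\alpha)]$, so the class of $A(\alpha)$ depends only on $[A]$.

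Next I would verify the action axioms and the inner-ness. Reading off \equref{A(alfa)} directly gives $A(\id)=A$ and $A(\alpha)(\beta)=A(\alpha\beta)$ (the twist precomposes the $B$-action with the automorphism and postcomposes the $B$-coaction with its inverse), so $\alpha\mapsto\bigl([A]\mapsto[A(\alpha)]\bigr)$ reverses composition and fixes the identity, and each such map is bijective with inverse induced by $\alpha^{-1}$; moreover it is a group homomorphism of $\BQ(\C;B)$ because the preceding lemma yields $(A\crta\ot A')(\alpha)\iso A(\alpha)\crta\ot A'(\alpha)$. Thus we get a right action of $\Aut(\C;B)$ on $\BQ(\C;B)$ by group automorphisms. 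That it is inner follows from \prref{inner action} once more: for Azumaya $A$ with $\Phi_{B,A}$ symmetric it gives $A\crta\ot[B_{\alpha},B_{\alpha}]\iso[B_{\alpha},B_{\alpha}]\crta\ot A(\alpha^{-1})$, i.e. $[A]\cdot[[B_{\alpha},B_{\alpha}]]=[[B_{\alpha},B_{\alpha}]]\cdot[A(\alpha^{-1})]$ in $\BQ(\C;B)$; since $[[B_{\alpha},B_{\alpha}]]=[\End(B_{\alpha})]=\Pi(\alpha^{-1})$ by \equref{map Pi}, this reads $[A(\alpha^{-1})]=\Pi(\alpha^{-1})^{-1}\,[A]\,\Pi(\alpha^{-1})$, and replacing $\alpha$ by $\alpha^{-1}$ yields $[A(\alpha)]=\Pi(\alpha)^{-1}\,[A]\,\Pi(\alpha)$, so the action is conjugation by the image of the group morphism $\Pi$.

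I expect the only genuinely delicate point to be the well-definedness step: identifying $[M,M](\alpha)$ as the unit class, and tracking that the symmetricity hypotheses of \prref{inner action} hold for the auxiliary algebras $[M,M]$ (and for $A$ itself in the last step, which uses that $[M,M]$ is Azumaya for $M$ faithfully projective and that $A$ is Azumaya). Everything else is formal bookkeeping of variances and of the group laws, once the preceding lemma, \prref{inner action}, and the homomorphism property of $\Pi$ are in hand.
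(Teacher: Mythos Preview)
Your argument is correct and follows essentially the same strategy as the paper: well-definedness is reduced to showing that $(-)(\alpha)$ preserves the unit Brauer class, and inner-ness comes directly from \prref{inner action} together with $[E_{\alpha}]=\Pi(\alpha^{-1})$.

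The one notable difference is in how well-definedness is packaged. You work with the definition $A\crta\ot[M,M]\iso A'\crta\ot[N,N]$ and explicitly invoke \prref{inner action} to show $[[M,M](\alpha)]=1$. The paper instead observes that $\crta{A(\alpha)}=\crta A(\alpha)$ as Yetter--Drinfel'd module algebras (because twisting affects only the $B$-structures, not the multiplication used to form the opposite), and then argues via $[A]=[A']\Rightarrow 1=[(A\crta\ot\crta{A'})(\alpha)]=[A(\alpha)][\crta{A'}(\alpha)]=[A(\alpha)][A'(\alpha)]^{-1}$. The paper's step ``$1=[(A\crta\ot\crta{A'})(\alpha)]$'' is stated without comment; your explicit use of \prref{inner action} to show that $(-)(\alpha)$ sends trivial classes to trivial classes is precisely what justifies it. So your write-up is more careful on this point, while the paper's route via $\crta{A(\alpha)}=\crta A(\alpha)$ is slightly slicker in that it avoids handling $[M,M]$ separately.
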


\begin{proof}
The definition of the action makes sense by the first part of the above lemma. 
Observe that $\left[\crta{A(\alpha)}\right]=\left[\crta A(\alpha)\right]$ in $\BQ(\C; B)$, since the
module structure (twisted by $\alpha$) and the algebra structure are independent. Then if $A\w\sim\w A'$, it
is $1=[(A\crta\ot\hspace{0,08cm} \crta{A'})(\alpha)]=[A(\alpha)][\crta{A'}(\alpha)]=[A(\alpha)][A'(\alpha)]^{-1}$,
which implies $A(\alpha)\w\sim\w A'(\alpha)$, by the second part of the above lemma.
The properties of the action are checked straightforwardly. Recall that it is $[E_{\alpha}]^{-1}=
[E_{\alpha^{-1}}]$ in ${}_B^B\YD(\C)$. Then \prref{inner action} interpreted for $\E=\C$
yields that $[E_{\alpha}][A][E_{\alpha}]^{-1}=[A(\alpha)]$ in $\BQ(\C; B)$.
\qed\end{proof}

\par\bigskip

\section{A sequence for the quantum Brauer group}

In this section the braided monoidal category $\E$ from the previous pages is replaced by $\C$.
Whenever two strings cross each other in the braided diagrams in the next proves, it represents the
braiding \equref{braidingD}. In the next proof we will identify the
grops $\BQ(\C; B)$ and $\BM(\C; D(B))$. This imposes a symmetricity condition on the braiding, as we
commented before \coref{D(H)-strongly inner}. Even without this identification $\Phi_{B,A}$ has to
be symmetric for all algebras $A\in {}_B ^B\YD(\C)$ in order that $\crta A$ be an algebra there,
as we stressed in the paragraph after \equref{br in YD}. Our main result is:

\begin{thm} \thlabel{seqBQ}
Let $H$ be a Hopf algebra over a field with a bijective antipode and set $\C={}_H ^H\YD$.
Suppose $B$ is a finite-dimensional Hopf algebra in $\C$ so that 
$\Phi_{B,M}$ is symmetric for all $M\in\C$. 
There is an exact sequence of groups:
\begin{equation} \eqlabel{seqBQ}
\bfig
\putmorphism(-200, 0)(1, 0)[1`G(D(B)^*)`]{500}1a
\putmorphism(300, 0)(1, 0)[\phantom{G(D(B)^*)}`G(D(B))` ]{700}1a
\putmorphism(1000, 0)(1, 0)[\phantom{G(D(B))}`\Aut(\C;B)`\Theta]{700}1a
\putmorphism(1700, 0)(1, 0)[\phantom{\Gal(\C;H)}`\BQ(\C; B).`\Pi]{700}1a
\efig
\end{equation}
\end{thm}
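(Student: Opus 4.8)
The plan is to establish exactness of
\[
1 \to G(D(B)^*) \to G(D(B)) \stackrel{\Theta}{\to} \Aut(\C;B) \stackrel{\Pi}{\to} \BQ(\C;B)
\]
at the three interior spots, by combining the identifications $\BQ(\C;B)\iso\BM(\C;D(B))=\Br({}_{D(B)}\C)$ and the two lemmas already proved: \leref{mods and grouplks} (about when two $B$-module structures induce the same module structure on inner-hom objects, expressed via group-likes on $B^*$) and \coref{D(H)-strongly inner} (about when $[[M,M]]=1$ in $\BM(\E;D(B))$). Throughout I work with $B$ finite-dimensional and $\Phi_{B,M}$ symmetric for all $M\in\C$, so that $D(B)$ exists in $\C$, \leref{symm-conds} applies, and ${}_B^B\YD(\C)\iso {}_{D(B)}\C$ as braided monoidal categories.

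First I would handle exactness at $G(D(B))$. By \leref{equiv cond gr} the subgroup $S(B)=\{(\lambda,g)\in G(D(B))\mid \crta g=\crta\lambda\}$ is isomorphic to $G(D(B)^*)$, and by the definition $\Theta((\lambda,g))=\crta g\,\crta\lambda$ (with $\crta g,\crta\lambda$ commuting) we have $(\lambda,g)\in\Ker\Theta$ iff $\crta g\,\crta\lambda=\Id_B$, i.e. iff $\crta g=\crta\lambda^{-1}=\crta{\lambda^{-1}}$. The point here is that $\lambda\mapsto\crta\lambda$ is a group morphism and $\crta{\lambda^{-1}}$ is the co-inner automorphism attached to $\lambda^{-1}\in G(B^*)$; so the condition $\crta g=\crta{\lambda^{-1}}$ is exactly membership of $(\lambda^{-1},g)$ in $S(B)$, and since $(\lambda,g)\mapsto(\lambda^{-1},g)$ is a bijection of $G(D(B))$ compatible with the group structure up to the trivial cross actions $\tr,\tl$, we get $\Ker\Theta\iso S(B)\iso G(D(B)^*)$. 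Injectivity of $G(D(B)^*)\hookrightarrow G(D(B))$ is then just the inclusion $S(B)\subseteq G(D(B))$, and exactness at $G(D(B)^*)$ is this injectivity.

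Next, exactness at $\Aut(\C;B)$. An element of the image of $\Theta$ is of the form $\crta g\,\crta\lambda$, and I must show $\Pi(\crta g\,\crta\lambda)=1$, i.e. $[\End(B_{(\crta g\crta\lambda)^{-1}})]=1$ in $\BQ(\C;B)$. Transporting to $\BM(\C;D(B))$ via \coref{D(H)-strongly inner}, it suffices to prove that the $D(B)$-action on $[B_\alpha,B_\alpha]$ is strongly inner when $\alpha=\crta g\,\crta\lambda$ (equivalently $\alpha^{-1}$, since $[E_\alpha]^{-1}=[E_{\alpha^{-1}}]$). This is where \leref{mods and grouplks} does the work: twisting the $B$-module structure of $B$ by an inner automorphism $\crta g$ changes $\sigma=\teta$ to $\sigma*(\text{something factoring through }B^*$-grouplike$)$, so the induced module structure on $[B_\alpha,B_\alpha]$ coincides with that coming from the untwisted $B$; and symmetrically twisting the comodule side by $\crta\lambda$ does not change the induced comodule structure on the inner-hom — precisely because the adjoint-type coaction formula \equref{H-comod-end-eq} only sees the coaction through the evaluation, and a co-inner twist by $\lambda$ amounts to conjugation by a group-like on $B^*$, which is absorbed. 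Hence $[B_\alpha,B_\alpha]\iso[B,B]$ as $D(B)$-module algebras, and $[B,B]$ being of the form $[M,M]$ represents the unit; so $\Pi\Theta=1$. For the reverse inclusion $\Ker\Pi\subseteq\Bi\Theta$, suppose $\Pi(\alpha)=1$, i.e. the $D(B)$-action on $[B_{\alpha^{-1}},B_{\alpha^{-1}}]$ is strongly inner via some algebra morphism $\vartheta\colon D(B)\to[B_{\alpha^{-1}},B_{\alpha^{-1}}]$. Restricting $\vartheta$ along the canonical inclusions $B\hookrightarrow D(B)$ and $B^*\hookrightarrow D(B)$ gives two $B$-module structures on $B_{\alpha^{-1}}$ whose induced structures on the inner-hom agree with the standard ones; \leref{mods and grouplks} then yields a group-like on $B^*$ measuring the discrepancy on the module side, and the dual argument (using \leref{symm-conds} to pass to $B^*$) yields a group-like on $B$ on the comodule side. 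Assembling these into an element $(\lambda,g)\in G(D(B))$ via the isomorphism $\Gamma$ of \prref{gr x gr}, one checks $\Theta((\lambda,g))=\alpha$. The compatibility condition that makes $(\lambda,g)$ land in $G(D(B))$ (rather than just a pair of group-likes) is exactly the statement that $\vartheta$ is an algebra morphism out of the double, i.e. the cross-relation \equref{B tie H -mod} between the two restricted actions; this is what couples the two group-likes.

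Finally I would note that $\Pi$ being a group morphism is already established (from \equref{anti-group}), so exactness at $\BQ(\C;B)$ is not asserted and nothing more is needed there. Assembling the three exactness statements completes the proof. The main obstacle I anticipate is the reverse inclusion $\Ker\Pi\subseteq\Bi\Theta$: one must extract \emph{both} a $B$-group-like and a $B^*$-group-like from a single strongly-inner-witnessing algebra morphism $\vartheta\colon D(B)\to[B_{\alpha^{-1}},B_{\alpha^{-1}}]$ and then verify that the resulting pair satisfies the coupling condition of \leref{equiv cond gr}(ii)--(iii) so that it genuinely defines a group-like on $D(B)$ with $\Theta$-image $\alpha$; getting the bookkeeping of inverses and the $\alpha$ versus $\alpha^{-1}$ twists to line up with the sign conventions fixed in \equref{spin-down}, \equref{spin-up}, \equref{map Pi} is the delicate part, and it leans essentially on the symmetricity of $\Phi_{B,B}$ (hence of $\Phi_{B,B^*}$, $\Phi_{B^*,B^*}$ by \leref{symm-conds}) so that opposite/co-opposite structures and the double are all well behaved.
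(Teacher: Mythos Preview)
Your overall architecture matches the paper's: reduce to $\BM(\C;D(B))$, use \coref{D(H)-strongly inner} to translate ``$\Pi(\alpha)=1$'' into ``the $D(B)$-action on $E_{\alpha^{-1}}$ is strongly inner'', and use \leref{mods and grouplks} on the $B$- and $B^*$-restrictions. Exactness at $G(D(B))$ via \leref{equiv cond gr} is also what the paper does.

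There is, however, a genuine gap in your $\Bi\Theta\subseteq\Ker\Pi$ argument. You claim that for $\alpha=\crta g\,\crta\lambda$ one has $[B_\alpha,B_\alpha]\iso[B,B]$ as $D(B)$-module algebras, because the twisted $B$-action on $B_\alpha$ differs from the untwisted one by a $B^*$-group-like. This is false. Writing (in the classical case, with the flip) $\theta_\alpha(b)(x)=\alpha(b_{(2)})\,x\,S^{-1}(b_{(1)})$, one computes $(\theta_{\Id}^{-1}*\theta_\alpha)(b)(x)=S(b_{(1)})\alpha(b_{(2)})\cdot x$; for this to factor through a group-like on $B^*$ one would need $S(b_{(1)})\alpha(b_{(2)})\in k\cdot 1_B$ for all $b$, and already for $\alpha=\crta g$ and a primitive $b$ this reads $gbg^{-1}-b$, which is not scalar in general. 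The paper does \emph{not} compare $B_\alpha$ to $B_{\Id}$. Instead it constructs new $B$- and $B^*$-actions $\mu,\nu$ on $B_\alpha$ (the second diagrams in \equref{gr-lk mod} and \equref{gr-lk comod}, built from $\lambda$ and $g^*$), verifies by a direct computation that $(\mu,\nu)$ satisfy \equref{B tie H -mod} so that $B_\alpha$ becomes a genuine $D(B)$-module, and only then applies \leref{mods and grouplks} to conclude that the resulting strongly inner action on $E_\alpha$ agrees with the original one. That verification of \equref{B tie H -mod} is the unavoidable step you are skipping.

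A smaller misconception appears in your $\Ker\Pi\subseteq\Bi\Theta$ direction. By \prref{gr x gr} the cross actions $\tr,\tl$ are trivial, so any pair $(\lambda,g)\in G(B^*)\times G(B)$ already lies in $G(D(B))$; no compatibility is needed for that. The cross-relation \equref{B tie H -mod} enters not to make $(\lambda,g)$ a group-like of the double, but to prove the identity $\Theta((\lambda^{-1},g))=\alpha^{-1}$. In the paper this is the computation running from the displayed $D(B)$-identity through \equref{racun1}--\equref{racun2} to the final line $\alpha=\crta\lambda\,\crta{g^{-1}}$. You have the right ingredients, but you should relocate where the double's cross-relation is actually used.
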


\begin{proof}
By \leref{equiv cond gr} and the definition of $S(B)$ we have that $G(D(B)^*)$ is a subgroup of
$G(D(B))$ and that the sequence is exact at $G(D(B))$.  Suppose that $\alpha^{-1}\in\Ker\Pi$,
that is $[E_{\alpha}]=1$ in $\BM(\C; D(B))$. 
Due to \coref{D(H)-strongly inner} the $D(B)$-action on $E_{\alpha}$ is strongly inner, let
$\theta: D(B)\to E_{\alpha}$ be the corresponding algebra morphism in $\C$. Denote by $\theta_B$ the
restriction of $\theta$ to $B$, that is $\theta_B=(B\stackrel{\eta_{B^*}\ot B}{\hookrightarrow} D(B)
\stackrel{\theta}{\to} E_{\alpha}$). The $B$-action on $B_{\alpha}$ induced by $\theta_B$ via
\equref{teta} induces a new $B$-action $\mu$ on $E_{\alpha}$ given by \equref{H-mod-end-eq}:
$$
\gbeg{3}{5}
\got{1}{B} \got{3}{E_{\alpha}} \gnl
\gcn{1}{1}{1}{3} \gvac{1} \gcl{1} \gnl
\gvac{1} \glmpt \gnot{\hspace{-0,44cm}\mu} \grmptb \gnl
\gvac{2}\gcl{1}\gnl
\gvac{2}\gob{1}{E_{\alpha}}
\gend=
\gbeg{4}{8}
\got{2}{B} \got{1}{E_{\alpha}} \gnl
\gcmu  \gcl{2} \gvac{1} \gnl
\gcl{1} \gmp{+} \gnl
\gcl{1} \gbr  \gnl
\gbmp{\teta_B} \gcl{1} \gbmp{\teta_B}  \gnl
\gcn{1}{1}{1}{0} \gmu \gnl
\hspace{-0,22cm}\gwmu{3} \gnl
\gvac{1} \gob{1}{E_{\alpha}}
\gend=
\gbeg{5}{9}
\got{2}{} \got{2}{B} \got{2}{E_{\alpha}} \gnl
\gvac{1} \hspace{-0,4cm} \gu{1} \gcn{1}{1}{3}{3} \gcl{1} \gvac{1} \gcl{1} \gnl
\gvac{1} \hspace{-0,34cm} \gcmu \gcmu \gcn{1}{1}{2}{1} \gnl
\gvac{1} \gcl{1} \gbr \gbr \gnl
\gvac{1} \glmpt \gnot{\hspace{-0,42cm}\theta} \grmptb \gbr \gmp{+} \gnl
\gvac{2} \gcl{2} \gcl{1} \glmpt \gnot{\hspace{-0,42cm}\theta} \grmptb \gnl
\gvac{3} \gwmu{3} \gnl
\gvac{2} \gwmu{3} \gnl
\gvac{3} \gob{1}{E_{\alpha}}
\gend\stackrel{\equref{1S}}{=}
\gbeg{4}{10}
\gvac{1} \got{2}{B} \got{1}{\hspace{-0,4cm}E_{\alpha}} \gnl
\gvac{1} \hspace{-0,4cm} \gu{1} \gcl{1} \gcl{1} \gnl
\gvac{1} \glmpt \gnot{\hspace{-0,44cm}D(B)} \grmpt \gcl{1} \gnl
\gvac{1} \gcmu \gcl{2} \gvac{1} \gnl
\gvac{1} \gcl{1} \gmp{S} \gnl
\gvac{1} \gcl{1} \gbr  \gnl
\gvac{1} \gbmp{\teta} \gcl{1} \gbmp{\teta}  \gnl
\gvac{1} \gcn{1}{1}{1}{0} \gmu \gnl
\gvac{1} \hspace{-0,22cm}\gwmu{3} \gnl
\gvac{2} \gob{1}{E_{\alpha}}
\gend=
\gbeg{4}{6}
\gvac{1} \got{2}{B} \got{1}{\hspace{-0,2cm}E_{\alpha}} \gnl
\gvac{1} \hspace{-0,4cm} \gu{1} \gcl{1} \gcl{2} \gnl
\gvac{1} \glmpt \gnot{\hspace{-0,44cm}D(B)} \grmptb \gnl
\gvac{2} \glm \gnl
\gvac{3} \gcl{1}\gnl
\gvac{3} \gob{1}{E_{\alpha}}
\gend\stackrel{\equref{BHmod-conv}}{=}
\gbeg{3}{5}
\got{1}{B} \got{3}{E_{\alpha}} \gnl
\gcn{1}{1}{1}{3} \gvac{1} \gcl{1} \gnl
\gvac{1} \glm \gnl
\gvac{2}\gcl{1}\gnl
\gvac{2}\gob{1}{E_{\alpha}.}
\gend
$$
Note that here $S$ stands for the antipode on $D(B)$. So, this newly induced $B$-action on $E_{\alpha}$
coincides with the original one. 
By \leref{mods and grouplks} there is an algebra morphism $\lambda: B\to I$ such that:
\begin{equation} \eqlabel{gr-lk mod}
\gbeg{3}{6}
\got{1}{B} \got{3}{B_{\alpha}} \gnl
\gcl{1} \gvac{1} \gcl{2} \gnl
\gbmp{\theta_{\q B}} \gnl
\gcn{1}{1}{1}{3}\gcn{1}{1}{3}{1} \gnl
\gvac{1} \gmp{\ev} \gnl
\gvac{1} \gob{1}{B_{\alpha}}
\gend=
\gbeg{3}{6}
\got{2}{B} \got{1}{B_{\alpha}} \gnl
\gcmu \gcl{3} \gnl
\gcl{1} \gbmp{\lambda} \gnl
\gcn{1}{1}{1}{3} \gnl
\gvac{1} \glm \gnl
\gvac{2} \gob{1}{B_{\alpha}}
\gend\stackrel{\equref{alfa}}{=}
\gbeg{7}{9}
\gvac{1} \got{2}{B} \got{1}{B} \gnl
\gvac{1} \gcmu \gcl{4} \gnl
\gcn{1}{1}{3}{2} \gvac{1} \gbmp{\lambda} \gnl
\gcmu \gnl
\gmp{-} \gbmp{\alpha} \gnl
\gcn{1}{1}{1}{3} \gwmu{3} \gnl
\gvac{1} \gbr \gnl
\gvac{1} \gmu \gnl
\gvac{1} \gob{2}{B}
\gend
\end{equation}
Symmetrically, let $\theta_{B^*}$ be the restriction of $\theta$ to $B^*$, that is $\theta_{B^*}=
(B^*\stackrel{B^*\ot\eta_{B}}{\hookrightarrow} D(B)\stackrel{\theta}{\to} E_{\alpha}$). The same way
as above, we have that the newly obtained $B^*$-action on $B_{\alpha}$ via $\theta_{B^*}$ induces
the same $B^*$-action on $E_{\alpha}$ as the original one (the restriction of the $D(B)$-action on
$E_{\alpha}$, which is induced by the $B^*$-action on $B_{\alpha}$ that comes form the left $B$-coaction
on $B_{\alpha}$, \equref{H*mod}). In this way we get that there is an algebra morphism $g^*: B^*\to I$ such that:
\begin{equation} \eqlabel{gr-lk comod} 
\gbeg{3}{6}
\got{1}{(B^{op})^*} \got{3}{B_{\alpha}} \gnl
\gcl{1} \gvac{1} \gcl{2} \gnl
\gbmp{\hspace{0,12cm}\theta_{\hspace{-0,07cm} B^{\q*}}} \gnl
\gcn{1}{1}{1}{3}\gcn{1}{1}{3}{1} \gnl
\gvac{1} \gmp{\ev} \gnl
\gvac{1} \gob{1}{B_{\alpha}}
\gend=
\gbeg{3}{6}
\got{2}{(B^{op})^*} \got{1}{B_{\alpha}} \gnl
\gcmu \gcl{3} \gnl
\gcl{1} \gbmp{g^*} \gnl
\gcn{1}{1}{1}{3} \gnl
\gvac{1} \glm \gnl
\gvac{2} \gob{1}{B_{\alpha}}
\gend=
\gbeg{3}{7}
\got{2}{B^*} \got{1}{B_{\alpha}} \gnl
\gcmu \gcl{4} \gnl
\gbr \gnl
\gcl{1} \gbmp{g^*} \gnl
\gcn{1}{1}{1}{3} \gnl
\gvac{1} \glm \gnl
\gvac{2} \gob{1}{B_{\alpha}}
\gend\hspace{-0,1cm}=\hspace{-0,1cm}
\gbeg{4}{6}
\got{2}{B^*} \gvac{1} \got{1}{B_{\alpha}} \gnl
\gcn{1}{1}{2}{2} \gvac{1} \gbmp{g} \gcl{3} \gnl
\gcmu \gcl{1} \gnl
\gcl{1} \gbr \gnl
\gev \glm \gnl
\gvac{3} \gob{1}{B_{\alpha}}
\gend\stackrel{\equref{H*mod}}{\stackrel{\equref{co-alfa}}{=}}
\gbeg{5}{6}
\got{2}{B^*} \gvac{1} \got{2}{B} \gnl
\gcn{1}{1}{2}{2} \gvac{1} \gbmp{g} \gcmu \gnl
\gcmu \gcl{1} \gibr \gnl
\gcl{1} \gbr \gcl{1} \gcl{2} \gnl
\gev \gev \gnl
\gvac{4} \gob{1}{B}
\gend\hspace{-0,1cm}=\hspace{-0,1cm}
\gbeg{5}{6}
\got{1}{B^*} \got{1}{} \gvac{1} \got{2}{B} \gnl
\gcl{1} \gvac{2} \gcmu \gnl
\gcl{1} \gbmp{g} \gvac{1} \gibr \gnl
\gcl{1} \gwmu{3} \gcl{2} \gnl
\gwev{3} \gnl
\gvac{4} \gob{1}{B}
\gend
\end{equation}
In the second equality above we used the fact that $(B^{op})^*\iso (B^*)^{cop}$ as coalgebras,
\cite[Remark 4.5]{Femic1}, and that $\Phi_{B^*, B^*}$ is symmetric.
Using \equref{D(H)-mult} one may compute the identity: 
$$
\gbeg{7}{12}
\gvac{1} \got{1}{B^*} \gvac{3} \got{1}{B} \gnl
\gwcm{3} \gvac{1} \gwcm{3} \gnl
\gcn{1}{1}{1}{2} \gvac{1} \hspace{-0,34cm} \gcmu  \gcmu \gcn{1}{1}{2}{1} \gnl
\gvac{1} \gibr \gcl{1} \gmp{-}  \gbr \gnl
\gvac{1} \gcl{2} \gcl{1} \gev \gcl{1} \gcl{2} \gnl
\gvac{2} \gwev{4} \gnl
\gvac{1} \gcn{1}{1}{1}{5} \gvac{3} \gcn{1}{1}{3}{-1} \gnl
\gvac{3} \gibr \gnl
\gvac{2} \gu{1} \gcl{1} \gcl{1} \gu{1} \gnl
\gvac{2} \glmptb \gnot{\hspace{-0,44cm}D(B)} \grmpt \glmpt \gnot{\hspace{-0,44cm}D(B)} \grmptb \gnl
\gvac{2} \gwmu{4} \gnl
\gvac{3} \gob{2}{D(B)}
\gend=id_{B^*\ot B}
$$
Let us now act with $D(B)$ on $B_{\alpha}$ via the $B^*$- and $B$-actions \equref{gr-lk mod} and
\equref{gr-lk comod}, recall \equref{BHmod-conv}. Because of the above identity we get: 
$$
\gbeg{7}{12}
\got{2}{B^*} \gvac{2} \got{2}{B} \got{1}{B} \gnl
\gcmu \gbmp{g} \gvac{1} \gcmu \gcl{4} \gnl
\gcl{1} \gbr \gcn{1}{1}{3}{2} \gvac{1} \gbmp{\lambda} \gnl
\gev \gcl{7} \gcmu \gnl
\gvac{3} \gmp{-} \gbmp{\alpha} \gnl
\gvac{3} \gcn{1}{1}{1}{3} \gwmu{3} \gnl
\gvac{4} \gbr \gnl
\gvac{4} \gmu \gnl
\gvac{4} \gcmu \gnl
\gvac{4} \gibr \gnl
\gvac{2} \gwev{3} \gcl{1} \gnl
\gvac{4} \gob{3}{B}
\gend=\hspace{-0,24cm}
\gbeg{9}{16}
\gvac{1} \got{1}{B^*} \gvac{3} \got{1}{B} \gvac{1} \got{1}{B} \gnl
\gwcm{3} \gvac{1} \gwcm{3} \gcn{1}{4}{1}{1} \gnl
\gcn{1}{1}{1}{2} \gvac{1} \hspace{-0,34cm} \gcmu  \gcmu \gcn{1}{1}{2}{1} \gnl
\gvac{1} \gibr \gcl{1} \gmp{-}  \gbr \gnl
\gvac{1} \gcl{2} \gcl{1} \gev \gcl{1} \gcl{2} \gnl
\gvac{2} \gwev{4} \gvac{1} \gcmu \gnl
\gvac{1} \gcn{1}{1}{1}{5} \gvac{3} \gcn{1}{1}{3}{-1} \gvac{1} \gibr \gnl
\gvac{3} \gibr \gvac{1} \gbmp{g} \gcl{1} \gcl{2} \gnl
\gvac{3} \hspace{-0,34cm} \gcmu \gcn{1}{1}{0}{3} \hspace{-0,22cm} \gvac{1} \gmu \gnl
\gvac{3} \gcn{1}{1}{2}{1} \gvac{1} \hspace{-0,34cm} \gbmp{\lambda} \gvac{1} \gev \gcn{1}{1}{2}{0} \gnl
\gvac{3} \gcmu \gcn{2}{2}{8}{3} \gnl
\gvac{3} \gmp{-} \gbmp{\alpha} \gnl
\gvac{3} \gcn{1}{1}{1}{3} \gwmu{3} \gnl
\gvac{4} \gbr \gnl
\gvac{4} \gmu \gnl
\gvac{4} \gob{2}{B}
\gend\hspace{-0,2cm}=\hspace{0,14cm}
\gbeg{9}{15}
\gvac{1} \got{1}{\hspace{-0,2cm}B^*} \got{2}{\hspace{0,12cm}B} \gvac{1} \got{2}{B} \gnl
\gcmu \gcmu \gvac{1} \gcn{1}{2}{2}{2} \gnl
\gibr \gbr \gnl
\gcl{3} \gev \gcn{1}{1}{1}{2} \gvac{1} \gcmu \gnl
\gvac{2} \gbmp{g} \gcmu \gibr \gnl
\gvac{2} \gcl{1} \gmp{-} \gbr \gcl{1} \gnl
\hspace{-0,34cm} \gcmu \gvac{1} \hspace{-0,2cm} \gibr \gcl{1} \gcn{1}{1}{1}{2} \gcn{1}{1}{1}{4} \gnl
\gcn{1}{1}{2}{2} \gcn{2}{2}{2}{4} \gcn{1}{1}{1}{2} \gmu \gcmu \gcn{1}{4}{2}{2} \gnl
\gcn{2}{2}{2}{4} \gvac{2} \hspace{-0,34cm} \gbr \gcn{1}{1}{2}{2} \gvac{1} \hspace{-0,34cm} \gbmp{\lambda} \gnl
\gvac{4} \hspace{-0,34cm} \gbr \gcl{1} \gcmu \gnl
\gvac{3} \gev \gev \gmp{-} \gbmp{\alpha} \gnl
\gvac{7} \gcn{1}{1}{1}{3} \gwmu{3} \gnl
\gvac{8} \gbr \gnl
\gvac{8} \gmu \gnl
\gvac{8} \gob{2}{B}
\gend=
\gbeg{9}{15}
\got{2}{B^*} \got{2}{B} \got{2}{B} \gnl
\gcmu \gcmu \gcmu \gnl
\gibr \gbr \gibr \gnl
\gcl{5} \gev \gbr \gcn{2}{2}{1}{5} \gnl
\gvac{1} \gbmp{g} \gvac{1} \gcl{1} \gcn{1}{1}{1}{2} \gnl
\gvac{1} \gwmu{3} \gcmu \gvac{1} \gcl{6} \gnl
\gvac{2} \gcl{1} \gvac{1} \gmp{-} \gcn{1}{1}{1}{2} \gnl
\gvac{2} \gwmu{3} \gcmu \gnl
\gwev{4} \gcn{1}{1}{3}{2} \gvac{1} \gbmp{\lambda} \gnl
\gvac{4} \gcmu \gnl
\gvac{4} \gmp{-} \gbmp{\alpha} \gnl
\gvac{4} \gcn{1}{1}{1}{3} \gwmu{3} \gnl
\gvac{5} \gbr \gnl
\gvac{5} \gmu \gnl
\gvac{5} \gob{2}{B.}
\gend
$$
Acting on both sides above by $\Epsilon$ we obtain:
$$
\gbeg{6}{12}
\got{2}{B^*} \gvac{1} \got{2}{B} \got{1}{B} \gnl
\gcmu \gvac{1} \gcmu \gcl{4} \gnl
\gcl{7} \gcl{6} \gcn{1}{1}{3}{2} \gvac{1} \gbmp{\lambda} \gnl
\gvac{2} \gcmu \gnl
\gvac{2} \gmp{-} \gbmp{\alpha} \gnl
\gvac{2} \gcn{1}{1}{1}{3} \gwmu{3} \gnl
\gvac{3} \gbr \gnl
\gvac{2} \gbmp{g} \gmu \gnl
\gvac{1} \gbr \gcn{1}{1}{2}{1} \gnl
\gev \gev \gnl
\gvac{4} \gob{3}{}
\gend=
\gbeg{5}{12}
\got{1}{B^*} \gvac{1} \got{2}{B} \got{1}{B} \gnl
\gcl{8} \gvac{1} \gcmu \gcl{4} \gnl
\gvac{1} \gcn{1}{1}{3}{2} \gvac{1} \gbmp{\lambda} \gnl
\gvac{1} \gcmu \gnl
\gvac{1} \gmp{-} \gbmp{\alpha} \gnl
\gvac{1} \gcn{1}{1}{1}{3} \gwmu{3} \gnl
\gvac{2} \gbr \gnl
\gvac{2}  \gmu \gnl
\gvac{1} \gbmp{g} \gcn{1}{1}{2}{1} \gnl
\gcn{1}{1}{1}{2} \gmu \gnl
\hspace{-0,2cm} \gvac{1} \gev \gnl
\gvac{4} \gob{3}{}
\gend=
\gbeg{7}{12}
\got{2}{B^*} \got{2}{B} \got{1}{B} \gnl
\gcmu \gcmu \gcl{2} \gnl
\gibr \gbr \gnl
\gcl{5} \gev \gbr \gnl
\gvac{1} \gbmp{g} \gvac{1} \gcl{1} \gcn{1}{1}{1}{2} \gnl
\gvac{1} \gwmu{3} \gcmu \gnl
\gvac{2} \gcl{1} \gvac{1} \gmp{-} \gbmp{\lambda} \gnl
\gvac{2} \gwmu{3} \gnl
\gwev{4} \gnl
\gvac{4} \gob{3}{}
\gend=
\gbeg{7}{12}
\got{2}{B^*} \gvac{1} \got{2}{B} \got{3}{B} \gnl
\gcmu \gvac{1} \gcmu \gvac{1} \gcl{4} \gnl
\gcl{8} \gcl{7} \gcn{1}{1}{3}{2} \gvac{1} \gcl{3} \gnl
\gvac{2} \gcmu \gnl
\gvac{2} \gmp{-} \gbmp{\lambda} \gvac{1} \gbmp{g} \gnl
\gvac{2} \gcl{1} \gcn{1}{1}{3}{1} \gvac{1} \gmu \gnl
\gvac{2} \gbr \gcn{1}{1}{4}{1} \gnl
\gvac{2} \gcl{2} \gbr \gnl
\gvac{3} \gmu \gnl
\gvac{1} \gbr \gcn{1}{1}{2}{1} \gnl
\gev \gev \gnl
\gvac{4} \gob{3}{}
\gend=
\gbeg{7}{12}
\got{1}{B^*} \gvac{1} \got{2}{B} \got{3}{B} \gnl
\gcl{9} \gvac{1} \gcmu \gvac{1} \gcl{4} \gnl
\gvac{1} \gcn{1}{1}{3}{2} \gvac{1} \gcl{3} \gnl
\gvac{1} \gcmu \gnl
\gvac{1} \gmp{-} \gbmp{\lambda} \gvac{1} \gbmp{g} \gnl
\gvac{1} \gcl{1} \gcn{1}{1}{3}{1} \gvac{1} \gmu \gnl
\gvac{1} \gbr \gcn{1}{1}{4}{1} \gnl
\gvac{1} \gcl{1} \gbr \gnl
\gvac{1} \gcn{1}{1}{1}{2} \gmu \gnl
\gvac{2} \hspace{-0,22cm} \gmu \gnl
\gvac{1} \hspace{-0,2cm} \gwev{3} \gnl
\gvac{4} \gob{3}{}
\gend
$$
The second and the last diagram in the above computation by the universal property of $ev_{B^*}$
yield \equref{racun1}, which applied to $B\ot\eta_B$ yields \equref{racun2}: \vspace{-0,3cm}
\begin{center}
\begin{tabular}{p{6.8cm}p{1cm}p{6.8cm}}
\begin{equation} \eqlabel{racun1}
\gbeg{4}{11}
\gvac{1} \got{2}{B} \got{1}{B} \gnl
\gvac{1} \gcmu \gcl{4} \gnl
\gcn{1}{1}{3}{2} \gvac{1} \gbmp{\lambda} \gnl
\gcmu \gnl
\gmp{-} \gbmp{\alpha} \gnl
\gcn{1}{1}{1}{3} \gwmu{3} \gnl
\gvac{1} \gbr \gnl
\gvac{1}  \gmu \gnl
\gbmp{g} \gcn{1}{1}{2}{1} \gnl
\gmu \gnl
\gob{2}{B}
\gend=
\gbeg{6}{11}
\gvac{1} \got{2}{B} \got{3}{B} \gnl
\gvac{1} \gcmu \gvac{1} \gcl{4} \gnl
\gcn{1}{1}{3}{2} \gvac{1} \gcl{3} \gnl
\gcmu \gnl
\gmp{-} \gbmp{\lambda} \gvac{1} \gbmp{g} \gnl
\gcl{1} \gcn{1}{1}{3}{1} \gvac{1} \gmu \gnl
\gbr \gcn{1}{1}{4}{1} \gnl
\gcl{1} \gbr \gnl
\gcn{1}{1}{1}{2} \gmu \gnl
\gvac{1} \hspace{-0,23cm} \gmu \gnl
\gob{4}{B}
\gend
\end{equation}  & &
\begin{equation} \eqlabel{racun2}
\gbeg{4}{9}
\gvac{2} \got{2}{B} \gnl
\gvac{2} \gcmu \gnl
\gvac{1} \gcn{1}{1}{3}{2} \gvac{1} \gbmp{\lambda} \gnl
\gvac{1} \gcmu \gnl
\gvac{1} \gmp{-} \gbmp{\alpha} \gnl
\gbmp{g} \gbr \gnl
\gcn{1}{1}{1}{2} \gmu \gnl
\gvac{1} \hspace{-0,2cm} \gmu \gnl
\gob{4}{B}
\gend=
\gbeg{6}{11}
\gvac{1} \got{2}{B} \gnl
\gvac{1} \gcmu \gnl
\gcn{1}{1}{3}{2} \gvac{1} \gcl{3} \gnl
\gcmu \gnl
\gmp{-} \gbmp{\lambda} \gnl
\gcl{1} \gcn{1}{1}{3}{1} \gnl
\gbr \gbmp{g} \gnl
\gcl{1} \gbr \gnl
\gcn{1}{1}{1}{2} \gmu \gnl
\gvac{1} \hspace{-0,23cm} \gmu \gnl
\gob{4}{B}
\gend
\end{equation}
\end{tabular}
\end{center}
Composing $(-\ot B)\Phi_{B,B}\Delta_B$ to \equref{racun2} from above and then $\nabla_B$
from below, by the (co)associativity and the antipode rule we end up with:
$$
\gbeg{3}{5}
\gvac{1} \got{2}{B} \gnl
\gvac{1} \gcmu \gnl
\gbmp{g} \gbmp{\alpha} \gbmp{\lambda} \gnl
\gmu \gnl
\gob{2}{B}
\gend=
\gbeg{3}{5}
\got{2}{B} \gnl
\gcmu \gnl
\gbmp{\lambda} \gcl{1} \gbmp{g} \gnl
\gvac{1} \gmu \gnl
\gob{4}{B}
\gend
$$
This finally implies:
$$
\gbeg{1}{5}
\got{1}{B} \gnl
\gcl{1} \gnl
\gbmp{\alpha} \gnl
\gcl{1} \gnl
\gob{1}{B}
\gend=
\gbeg{6}{6}
\gvac{3} \got{1}{B} \gnl
\gvac{2} \gwcm{3} \gnl
\gbmp{g^{\hspace{-0,08cm}-}}  \gbmp{g} \gbmp{\alpha} \gvac{1} \hspace{-0,34cm} \gcmu \gnl
\gvac{1} \hspace{-0,22cm} \gmu \gcn{1}{1}{1}{0} \gvac{1} \hspace{-0,34cm} \gbmp{\lambda} \gbmp{\hspace{0,1cm}\lambda^{\hspace{-0,08cm}-}} \gnl
\gvac{2} \gmu \gnl
\gvac{2} \gob{2}{B}
\gend=
\gbeg{6}{7}
\gvac{3} \got{1}{B} \gnl
\gvac{3} \hspace{-0,34cm} \gcmu \gnl
\gvac{3} \hspace{-0,2cm} \gcmu \gcn{1}{1}{0}{1} \gnl
\gvac{1} \gbmp{g^{\hspace{-0,08cm}-}} \gbmp{g} \gbmp{\alpha} \gbmp{\lambda} \gbmp{\hspace{0,1cm}\lambda^{\hspace{-0,08cm}-}} \gnl
\gvac{1} \gcn{1}{1}{1}{2} \gmu \gnl
\gvac{2} \hspace{-0,22cm} \gmu \gnl
\gvac{2} \gob{2}{B}
\gend=
\gbeg{6}{7}
\gvac{2} \got{2}{B} \gnl
\gvac{2} \hspace{-0,34cm} \gwcm{3} \gnl
\gvac{2} \hspace{-0,34cm} \gcmu \gcn{1}{1}{2}{3} \gnl
\gvac{1} \gbmp{g^{\hspace{-0,08cm}-}} \gbmp{\lambda} \gcl{1} \gbmp{g}  \gbmp{\hspace{0,1cm}\lambda^{\hspace{-0,08cm}-}} \gnl
\gvac{1} \gcn{1}{1}{1}{2} \gvac{1} \gmu \gnl
\gvac{2} \hspace{-0,22cm} \gwmu{3} \gnl
\gvac{3} \gob{1}{B}
\gend=
\gbeg{1}{6}
\got{1}{B} \gnl
\gcl{1} \gnl
\gbmp{\crta\lambda} \gnl
\gbmp{\vspace{-0,08cm}\crta{g^{\hspace{-0,08cm}-}}} \gnl
\gcl{1} \gnl
\gob{1}{B}
\gend
$$
proving that $\alpha^{-1}=\crta g(\crta\lambda)^{-1}=\Theta(\lambda^{-1}, g)$, i.e. $\alpha^{-1}\in\Bi\Theta$.
\par\medskip

The final step in the proof is to show that $\Bi\Theta\subseteq\Ker\Pi$. Take $\alpha^{-1}\in\Bi\Theta$,
where $\alpha=\crta{(g^*)^{-1}} \hspace{0,1cm}\crta{\lambda}$ for $g^*\in\Alg(B^*, I)$ and $\lambda\in
\Alg(B, I)$. We will prove that the $D(B)$-action on $E_{\alpha}$ is strongly inner. By
\coref{D(H)-strongly inner} this will finish the proof. We give a new $B$-action $\mu$ on $B_{\alpha}$
given by the second diagram in \equref{gr-lk mod} and a new $B^*$-action $\nu$ on $B_{\alpha}$ given by
the second diagram in \equref{gr-lk comod}. They indeed define actions, since $g^*$ and $\lambda$ are
algebra morphisms. After a long computation which we omit for practical reasons one may check that the
identity \equref{B tie H -mod} holds, i.e. these actions make $B_{\alpha}$ a $D(B)$-module. We further
define maps $\theta_B: B\to E_{\alpha}$ and $\theta_{B^*}: B^*\to E_{\alpha}$
by $ev_{[B_{\alpha},B_{\alpha}]}(\theta_B \otimes B_{\alpha})=\mu$ and
$ev_{[B_{\alpha},B_{\alpha}]}(\theta_{B^*} \otimes B_{\alpha})=\nu$, respectively. Since $\mu$ and $\nu$ are actions,
the morphisms $\theta_B$ and $\theta_{B^*}$ are algebra morphisms. Now $\beta: D(B)\to E_{\alpha}$
defined via $\beta=\nabla_{E_{\alpha}}(\theta_{B^*}\ot\theta_B)$ is an algebra morphism, because
the actions $\mu$ and $\nu$ make $B_{\alpha}$ a $D(B)$-module:
$$
\gbeg{8}{7}
\got{1}{(B^{op})^*} \got{3}{B} \got{1}{(B^{op})^*} \gvac{1} \got{1}{B} \got{1}{B_{\alpha}} \gnl
\gbmp{\hspace{0,12cm}\theta_{\hspace{-0,07cm} B^{\q*}}} \gvac{1} \gbmp{\theta_{\q B}} \gvac{1}
 \gbmp{\hspace{0,12cm}\theta_{\hspace{-0,07cm} B^{\q*}}} \gvac{1} \gbmp{\theta_{\q B}} \gcl{3} \gnl
\gwmu{3} \gvac{1} \gwmu{3} \gnl
\gvac{1} \gwmu{5} \gnl
\gvac{3} \gcn{1}{1}{1}{5} \gvac{1} \gcn{1}{1}{5}{1} \gnl
\gvac{5} \gmp{\ev} \gnl
\gvac{5} \gob{1}{B_{\alpha}}
\gend\stackrel{\nabla_{[B_{\alpha},B_{\alpha}]}}{\stackrel{\theta_{B^*}, \theta_B}{=}}
\gbeg{7}{11}
\got{1}{B^*} \got{1}{B} \got{1}{B^*} \got{1}{B} \got{3}{B_{\alpha}} \gnl
\gcl{5} \gcl{3} \gcl{1} \gcn{1}{1}{1}{3} \gvac{1} \gcl{1} \gnl
\gvac{2} \gcl{1} \gvac{1} \glmpt \gnot{\hspace{-0,44cm}\mu} \grmptb \gnl
\gvac{1} \gcn{2}{1}{3}{7} \gvac{2} \gcl{1} \gnl
\gcn{2}{1}{3}{5} \gvac{2} \glmpt \gnot{\hspace{-0,44cm}\nu} \grmptb \gnl
\gvac{2} \gcn{2}{1}{1}{5} \gvac{1} \gcl{1} \gnl
\gcn{2}{1}{1}{5} \gvac{2} \glmpt \gnot{\hspace{-0,44cm}\mu} \grmptb  \gnl
\gvac{2} \gcn{2}{1}{1}{5} \gvac{1} \gcl{1} \gnl
\gvac{4} \glmpt \gnot{\hspace{-0,44cm}\nu} \grmptb  \gnl
\gvac{5} \gcl{1} \gnl
\gvac{4} \gob{3}{B_{\alpha}}
\gend\stackrel{\equref{B tie H -mod}}{=}
\gbeg{7}{12}
\got{1}{B^*} \got{2}{B} \got{2}{B^*} \got{1}{B} \got{3}{B_{\alpha}} \gnl
\gcl{1} \gcmu \gcmu \gcn{1}{1}{1}{3} \gvac{1} \gcl{1} \gnl
\gcl{1} \gcl{1} \gbr \gcl{1} \gvac{1} \glmpt \gnot{\hspace{-0,44cm}\mu} \grmptb \gnl
\gcl{3} \glm \grm \gvac{2} \gcl{1} \gnl
\gvac{2} \gcl{1} \gcn{2}{1}{1}{5} \gcn{1}{1}{5}{3} \gnl
\gvac{2} \gcn{2}{2}{1}{5} \gvac{1} \glmpt \gnot{\hspace{-0,44cm}\nu} \grmptb \gnl
\gcn{2}{3}{1}{7} \gvac{3} \gcn{1}{1}{3}{1} \gnl
\gvac{4} \glmpt \gnot{\hspace{-0,44cm}\mu} \grmptb \gnl
\gvac{4} \gcn{1}{1}{3}{1} \gnl
\gvac{3} \glmpt \gnot{\hspace{-0,44cm}\nu} \grmptb \gnl
\gvac{4} \gcl{1} \gnl
\gvac{3} \gob{3}{B_{\alpha}}
\gend
$$

$$\stackrel{\theta_{B^*}, \theta_B}{\stackrel{\nabla_{[B_{\alpha},B_{\alpha}]}}{=}}
\gbeg{7}{10}
\got{1}{B^*} \got{2}{B} \got{2}{B^*} \got{1}{B} \got{2}{B_{\alpha}} \gnl
\gcl{1} \gcmu \gcmu \gcl{3} \gcn{1}{6}{2}{2} \gnl
\gcl{1} \gcl{1} \gbr \gcl{1} \gnl
\gcl{1} \glm \grm \gnl
\gbmp{\hspace{0,12cm}\theta_{\hspace{-0,07cm} B^{\q*}}} \gvac{1}
   \gbmp{\hspace{0,12cm}\theta_{\hspace{-0,07cm} B^{\q*}}} \gbmp{\theta_{\q B}} \gvac{1} \gbmp{\theta_{\q B}} \gnl
\gwmu{3} \gwmu{3} \gnl
\gvac{1} \gwmu{4} \gnl
\gvac{3} \gcn{1}{1}{0}{4} \gvac{1} \gcn{1}{1}{4}{0} \gnl
\gvac{5} \hspace{-0,34cm} \gmp{\ev} \gnl
\gvac{5} \gob{1}{B_{\alpha}}
\gend\stackrel{\theta_{B^*}, \theta_B}{\stackrel{alg.m.}{=}}
\gbeg{7}{10}
\got{1}{B^*} \got{2}{B} \got{2}{B^*} \got{1}{B} \got{2}{B_{\alpha}} \gnl
\gcl{1} \gcmu \gcmu \gcl{3} \gcn{1}{6}{2}{2} \gnl
\gcl{1} \gcl{1} \gbr \gcl{1} \gnl
\gcl{1} \glm \grm \gnl
\gwmu{3} \gwmu{3} \gnl
\gvac{1} \gbmp{\hspace{0,12cm}\theta_{\hspace{-0,07cm} B^{\q*}}} \gvac{2} \gbmp{\theta_{\q B}} \gnl
\gvac{1} \gwmu{4} \gnl
\gvac{3} \gcn{1}{1}{0}{4} \gvac{1} \gcn{1}{1}{4}{0} \gnl
\gvac{5} \hspace{-0,34cm} \gmp{\ev} \gnl
\gvac{5} \gob{1}{B_{\alpha}}
\gend=
\gbeg{5}{6}
\got{1}{D(B)} \got{3}{D(B)} \got{1}{B_{\alpha}} \gnl
\gwmu{3} \gvac{1} \gcl{2} \gnl
\gvac{1} \gbmp{\beta} \gnl
\gvac{1} \gcn{1}{1}{1}{4} \gvac{1} \gcn{1}{1}{3}{0} \gnl
\gvac{3} \hspace{-0,34cm} \gmp{\ev} \gnl
\gvac{3} \gob{1}{B_{\alpha}.}
\gend
$$
Then there is a strongly inner action of $D(B)$ on $E_{\alpha}$, \leref{strongly inner}, 1). Finally, this
action coincides with the original one, induced by
the original $B$- and $B^*$-action (i.e. left $B$-coaction) on $B_{\alpha}$. Indeed, we defined $\theta_B$
(resp. $\theta_{B^*}$) so that the $B$-action $\mu$ (resp. $B^*$-action $\nu$) on $B_{\alpha}$ defers
from the original one by a group-like $\lambda$ (resp. $g^*$), see \equref{sigma-tau related},
and apply \leref{mods and grouplks}.
\qed\end{proof}

\begin{rem} \rmlabel{relation BQs}
In the above theorem, if both $H$ and $B$ have bijective antipodes, the isomorphism of
braided monoidal categories \equref{iso double-YD} yields the group isomorphism
$\BQ(\C; B) \iso \BQ(k; B\rtimes H)$. Though, for the automorphism group a similar claim is not
true in general. The relation between $\Aut(\C; B)$ and $\Aut(B\rtimes H)$ will be illustrated
by an example in \ssref{ex-base level}.
\end{rem}

For (co)commutative Hopf algebras $B$ in $\C$ a part of the symmetricity condition in \thref{seqBQ}
is fulfilled, so these may serve as candidates for the application of our theorem:

\begin{prop} \cite{Sch}  
If $B$ is a commutative or cocommutative Hopf algebra in any braided monoidal category $\E$ with braiding
$\Phi$, then $\Phi_{B,B}=\Phi^{-1}_{B,B}$.
\end{prop}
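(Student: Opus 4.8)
The plan is to prove $\Phi_{B,B}=\Phi_{B,B}^{-1}$, equivalently $\Phi_{B,B}\circ\Phi_{B,B}=\Id_{B\ot B}$, by a string–diagram computation in $\E$, and it suffices to treat the cocommutative case. Indeed, if $B$ is commutative in $(\E,\Phi)$, then the opposite data $(B,\Delta_B,\varepsilon_B,\nabla_B,\eta_B,S_B)$ is a Hopf algebra in the braided category $(\E^{\mathrm{op}},\Phi^{\mathrm{op}})$, where $\Phi^{\mathrm{op}}_{X,Y}=(\Phi_{Y,X})^{-1}$; commutativity of $\nabla_B$ with respect to $\Phi$ translates into cocommutativity of the new comultiplication $\Delta^{\mathrm{op}}_B=\nabla_B$ with respect to $\Phi^{\mathrm{op}}$, and since $(\Phi^{\mathrm{op}}_{B,B})^2=\Id$ is equivalent to $\Phi_{B,B}^2=\Id$, the commutative case follows formally from the cocommutative one.

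So assume $\Phi_{B,B}\circ\Delta_B=\Delta_B$; composing with $\Phi_{B,B}^{-1}$ also gives $\Phi_{B,B}^{-1}\circ\Delta_B=\Delta_B$, so both a braiding crossing and its inverse may be deleted whenever they sit immediately above the two output legs of a $\Delta_B$. First I would record the naturality identities coming from the hexagon axioms: $\Phi_{B,B}(\nabla_B\ot B)=(B\ot\nabla_B)(\Phi_{B,B}\ot B)(B\ot\Phi_{B,B})$ together with its mirror on the other tensorand, and $(B\ot\Phi_{B,B})(\Phi_{B,B}\ot B)(B\ot\Delta_B)=(\Delta_B\ot B)\Phi_{B,B}$ with its mirror; these let one trade a single crossing for two crossings across a (co)multiplication and conversely. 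Then I would start from $\Phi_{B,B}\circ\Phi_{B,B}$ and insert the antipode/counit identity $\nabla_B(S_B\ot B)\Delta_B=\eta_B\varepsilon_B=\nabla_B(B\ot S_B)\Delta_B$ together with the braided bialgebra compatibility $\Delta_B\nabla_B=(\nabla_B\ot\nabla_B)(B\ot\Phi_{B,B}\ot B)(\Delta_B\ot\Delta_B)$. Pushing the two braidings through the $\Delta_B$'s and $\nabla_B$'s created this way, using the naturality identities above, should bring the diagram to a form in which every remaining crossing issues directly from a $\Delta_B$; cocommutativity then erases those crossings, and the antipode axiom and the (co)unit laws collapse what is left to $\Id_{B\ot B}$.

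The only genuine obstacle is the diagrammatic bookkeeping: one has to thread the two braiding crossings through several copies of $\nabla_B$, $\Delta_B$ and $S_B$ and insert the antipode identity at exactly the place where the two crossings are routed onto comultiplication legs, so that cocommutativity annihilates both of them rather than leaving a residual crossing behind. Once that routing is set up the verification is routine, and, as explained, the commutative case needs no separate treatment.
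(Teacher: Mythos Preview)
The paper does not supply its own proof of this proposition; it is quoted from Schauenburg \cite{Sch} and used as a black box. So there is nothing in the paper to compare your argument against.

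As to the proposal itself: your reduction of the commutative case to the cocommutative one by passing to the opposite braided category is sound, and the ingredients you list for the cocommutative case (the antipode identity, the braided bialgebra compatibility, naturality of $\Phi$ with respect to $\nabla_B$ and $\Delta_B$, and cocommutativity to erase crossings sitting on $\Delta_B$-outputs) are exactly the ones Schauenburg uses. The weakness is that what you call ``diagrammatic bookkeeping'' and declare routine is in fact the whole computation: you have not exhibited where to insert the antipode identity nor the sequence of naturality moves that routes \emph{both} crossings of $\Phi_{B,B}^2$ onto comultiplication legs simultaneously. Schauenburg's published proof is short, but it does carry out this routing explicitly; until you do the same, what you have written is a correct plan rather than a proof.
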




For $B$ which is both commutative and cocommutative, \thref{seqBQ} generalizes Deegan-Caenepeel's
embedding result to the case of a braided
Hopf algebra, \cite[Theorem 4.1]{Cae4}.
Before seeing this we recall that the left and the right center of an algebra are a priori two
different objects, but in a symmetric monoidal category they coincide, \cite[Proposition 3.7]{Femic2}.
Though, a group-like $\lambda\in\Coalg(I, B)$ on a Hopf algebra $B$ in any braided monoidal category
$\E$ factors through the left center $Z_l(B)$ of $B$ if and only if it factors through the right
center $Z_r(B)$ of $B$. This is so, because the domain of $\lambda$ is $I$, so by naturality we
have: $\Phi_{B,B}(\lambda\ot B)=B\ot\lambda=\Phi_{B,B}^{-1}(\lambda\ot B)$.

\begin{cor} \colabel{main cor BQ}
The assumptions are like in \thref{seqBQ}. Moreover, suppose that $G(B)=\Coalg(I,Z_{\bullet}(B))$
and $G(B^*)=\Coalg(I,Z_{\bullet}(B^*))$, i.e. that the group-likes on $B$ and $B^*$ factor through
the coalgebra morphisms $I\to Z_{\bullet}(B)$ and $I\to Z_{\bullet}(B^*)$ to the left/right center
of $B$ and $B^*$, respectively. Then the map $\Pi: \Aut(\C; B) \to \BQ(\C; B)$ is injective.
Consequently, for $B$ commutative and cocommutative $\Aut(\C; B)$ can be embedded into the
Brauer-Long group $\BD(\C; B)$.
\end{cor}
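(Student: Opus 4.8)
The plan is to read off the injectivity of $\Pi$ from the exactness of \equref{seqBQ}, which is exactly what \thref{seqBQ} provides. Exactness at $\Aut(\C;B)$ means $\Ker\Pi=\Bi\Theta$, so $\Pi$ is injective iff the group homomorphism $\Theta\colon G(D(B))\to\Aut(\C;B)$ has trivial image, i.e. $\Theta\big((\lambda,g)\big)=\Id_B$ for every $(\lambda,g)$ in $G(D(B))\iso G(B^*)\times G(B)$. Since $\Theta\big((\lambda,g)\big)=\crta g\,\crta\lambda$ by \equref{map Teta}, it is enough to prove that, under the present hypotheses, every inner automorphism $\crta g$ with $g\in G(B)$ and every co-inner automorphism $\crta\lambda$ with $\lambda\in G(B^*)$ is the identity.

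For $\crta g$: by hypothesis $g$ factors as $I\to Z_\bullet(B)\hookrightarrow B$, and --- as recalled just before the statement --- for a morphism out of $I$ factoring through the left center is the same as factoring through the right center, because by naturality $\Phi_{B,B}(g\ot B)=B\ot g=\Phi_{B,B}^{-1}(g\ot B)$. Hence $g$ is a central convolution-invertible morphism, so $\nabla_B(g\ot B)=\nabla_B(B\ot g)$, and unravelling \equref{spin-down},
\[
\crta g=\nabla_B\big((\nabla_B(g\ot B))\ot g^{-1}\big)=\nabla_B\big(B\ot(\nabla_B(g\ot g^{-1}))\big)=\nabla_B(B\ot\eta_B)=\Id_B ,
\]
where I used centrality of $g$ to slide it past the $B$-string, associativity, and $g*g^{-1}=\eta_B$. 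For $\crta\lambda$ I would argue dually. Transposing the two sides of \equref{spin-up} via $(\ )^*$ and using $\nabla_{B^*}=(\Delta_B)^*$, one sees that the co-inner automorphism $\crta\lambda$ of $B$ attached to $\lambda\in\Coalg(I,B^*)$ is the transpose of the \emph{inner} automorphism of $B^*$ attached to the same $\lambda$; by hypothesis $\lambda$ factors through $Z_\bullet(B^*)$, so the previous paragraph applied to the finite Hopf algebra $B^*$ shows that inner automorphism of $B^*$ is $\Id_{B^*}$, whence $\crta\lambda=\Id_B$ because $B$ is finite-dimensional and therefore reflexive. Thus $\Theta\big((\lambda,g)\big)=\crta g\,\crta\lambda=\Id_B$ for all $(\lambda,g)$, so $\Bi\Theta$ is trivial and $\Pi$ is injective.

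For the last assertion, if $B$ is commutative and cocommutative in $\C$ then $Z_\bullet(B)=B$, so $G(B)=\Coalg(I,B)=\Coalg(I,Z_\bullet(B))$ holds trivially; dually $B^*$ is commutative, so $G(B^*)=\Coalg(I,Z_\bullet(B^*))$ as well; and $\Phi_{B,B}$ is symmetric by \cite{Sch}. Hence the first part applies and $\Pi\colon\Aut(\C;B)\to\BQ(\C;B)$ is injective, and since for a commutative cocommutative $B$ the Yetter--Drinfel'd $B$-modules in $\C$ are precisely the $B$-dimodules in $\C$ we have $\BQ(\C;B)\iso\BD(\C;B)$, so $\Pi$ embeds $\Aut(\C;B)$ into the Brauer--Long group $\BD(\C;B)$. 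I expect the one point needing care to be the transpose identification for $\crta\lambda$ together with the left/right-center bookkeeping and the use of finiteness of $B$; everything else is formal once the exact sequence \equref{seqBQ} is available.
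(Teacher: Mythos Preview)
Your argument is correct and follows a route that is close in spirit to, but slightly more direct than, the paper's own proof. Both arguments use the exactness of \equref{seqBQ} at $\Aut(\C;B)$ to reduce injectivity of $\Pi$ to triviality of $\Bi\Theta$. From there you diverge: you establish $\crta g=\Id_B$ and $\crta\lambda=\Id_B$ separately, straight from the centrality hypotheses and the definitions \equref{spin-down}, \equref{spin-up}; the paper instead verifies condition (iii) of \leref{equiv cond gr} for every pair $(\lambda,g)$, deduces via that lemma that $G(D(B))=G(D(B)^*)$, and then invokes exactness at $G(D(B))$ to conclude that $\Theta$ is trivial. Your route bypasses \leref{equiv cond gr} and the second exactness step; the paper's route has the advantage that its computation stays entirely inside $B$ and never needs the transpose identification you flag as delicate. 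Concretely, the paper dualizes the left-center condition on $\lambda^*\in\Coalg(I,B^*)$ to the statement $(\lambda\ot B)\Delta_B=(\lambda\ot B)\Phi_{B,B}^{-1}\Delta_B$, and combines it with the centrality of $g$ to obtain condition (iii). If you prefer to keep your structure but avoid the transpose argument, you can use that same dualized condition to compute $\crta\lambda$ directly: it yields $\lambda(b_{(1)})\,b_{(2)}=b_{(1)}\,\lambda(b_{(2)})$ in Sweedler shorthand, whence $\crta\lambda(b)=\lambda(b_{(1)})\,b_{(2)}\,\lambda^{-1}(b_{(3)})=b_{(1)}\,(\lambda*\lambda^{-1})(b_{(2)})=b$, without ever passing to $B^*$.
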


\begin{proof}
Let $g\in\Coalg(I,B)$ and $\lambda^*\in\Coalg(I,B^*)$ be arbitrary. The condition that
$g$ and $\lambda^*$ factor through the
right center of $B$ and left center of $B^*$, respectively, mean that:
$$
\gbeg{2}{4}
\got{3}{B} \gnl
\gbmp{g} \gcl{1} \gnl
\gmu \gnl
\gob{2}{B}
\gend=
\gbeg{2}{5}
\got{3}{B} \gnl
\gbmp{g} \gcl{1} \gnl
\gbr \gnl
\gmu \gnl
\gob{2}{B}
\gend\qquad\textnormal{and}\qquad
\gbeg{2}{4}
\got{3}{B^*} \gnl
\gbmp{\lambda^*} \gcl{1} \gnl
\gmu \gnl
\gob{2}{B}
\gend=
\gbeg{2}{5}
\got{3}{B^*} \gnl
\gbmp{\lambda^*} \gcl{1} \gnl
\gibr \gnl
\gmu \gnl
\gob{2}{B}
\gend\quad\Leftrightarrow\quad
\gbeg{2}{4}
\got{2}{B} \gnl
\gcmu \gnl
\gbmp{\lambda} \gcl{1} \gnl
\gob{3}{B}
\gend=
\gbeg{2}{5}
\got{2}{B} \gnl
\gcmu \gnl
\gibr \gnl
\gbmp{\lambda} \gcl{1} \gnl
\gob{3}{B.}
\gend
$$
This yields:
$$\gbeg{4}{5}
\got{2}{B} \gnl
\gcmu \gnl
\gbmp{\lambda} \gcl{1} \gbmp{g} \gnl
\gvac{1} \gmu \gnl
\gob{4}{B}
\gend=
\gbeg{4}{7}
\got{2}{B} \gnl
\gcmu \gnl
\gibr \gnl
\gbmp{\lambda} \gcl{1} \gbmp{g} \gnl
\gvac{1} \gibr \gnl
\gvac{1} \gmu \gnl
\gob{4}{B}
\gend=
\gbeg{3}{5}
\got{4}{B} \gnl
\gvac{1} \gcmu \gnl
\gbmp{g} \gcl{1} \gbmp{\lambda} \gnl
\gmu \gnl
\gob{2}{B}
\gend
$$
which by \leref{equiv cond gr} means that $g\bowtie\lambda\in G(D(B)^*)$. Then due to \prref{gr x gr}
it follows: $G(D(B))=G(D(B^*))$. From the exactness of the sequence in \thref{seqBQ} we conclude that
$\Theta$ is a trivial map, hence $\Pi$ is injective. The final statement follows from the fact that for
a commutative and cocommutative Hopf algebra $B$ in $\C$ the category ${}_B^B\YD(\C)$ coincides with that
of $B$-dimodules in $\C$.
\qed\end{proof}

\section{Examples}

In \cite{VZ4} there was constructed an exact sequence of the form of \equref{seqBQ} where instead of
$\C$ the category $_R\M$ of modules over a commutative ring $R$ was considered.
Our sequence enables one to regard braided Hopf algebras, in particular those arising from a
Radford biproduct $B\rtimes H$, where $H$ is a Hopf algebra over a field and $B$ is a Hopf algebra in
the category of Yetter-Drinfel'd modules over $H$. In this section we treat some examples of this sort.


\subsection{A family of examples} \sslabel{H(nd)} 

In \cite{CF1} we studied the Brauer group $\BM(\E; B)$ of the braided monoidal category ${}_B\E$
of left modules over a Hopf algebra $B$ in $\E$, where $\E$ is a braided monoidal category.
We computed this group for the family of Hopf algebras studied in
\cite[Section 4]{N} and defined as follows. Let $n,m$ be
natural numbers, $k$ a field such that $char(k) \nmid 2m$ and $\omega$ a $2m$-th primitive root of unity.
For $i=1,...,n$ choose $1 \leq d_i < 2m$ odd numbers and set $d^{\leq n}=(d_1,...,d_n)$. Then
$$H(m,n,d^{\leq n})=k\langle g,x_1,...,x_n\vert g^{2m}=1, x_i^2=0, gx_i=\omega^{d_i} x_ig, x_ix_j=-x_jx_i \rangle$$
is a Hopf algebra, where $g$ is group-like and $x_i$ is a $(g^m, 1)$-primitive element, that is,
$\Delta(x_i)=1\ot x_i+x_i\ot g^m$ and $\Epsilon(x_i)=0$. The antipode is given by $S(g)=g^{-1}$ and
$S(x_i)=-x_ig^{m}$. We proved that $H(m,n,d^{\leq n})$ decomposes as the Radford biproduct:
\begin{equation} \eqlabel{ex-biproduct}
H(m,n,d^{\leq n})\iso B\rtimes H(m,n-1,d^{\leq n-1})
\end{equation}
where the braided Hopf algebra is the exterior algebra $B=K[x_n]/(x_n^2)$. The isomorphism is given by
$G \mapsto 1\times g, X_i \mapsto 1 \times x_i, X_n \mapsto x_n \times g^m$. We have that $B$
is a module over $H=H(m,n-1,d^{\leq n-1})$ by the action $g\cdot x_n=\omega^{d_n}x_n$ and $x_i \cdot x_n=0$
for $i=1,...,n-1$. It becomes a commutative and cocommutative Hopf algebra in $_H\M$ with $x_n$ being a
primitive element, i.e., $\Delta_B(x_n)=1\ot x_n+x_n\ot 1,\ \Epsilon_B(x_n)=0$ and $S_B(x_n)=-x_n.$
The Hopf algebra $H(m,n,d^{\leq n})$ is quasitriangular with the family of quasitriangular structures
\cite[(6.4) on p. 69]{CF1}:
\begin{eqnarray} \label{qtr-Hnu}
\R_s^n=\frac{1}{2m}\Big(\sum_{j,t=0}^{2m-1}\omega^{-jt}g^j \otimes g^{st}\Big)
\end{eqnarray}
where $0\leq s< 2m$ is such that $sd_i \equiv m \ (mod.\ 2m)$ for every $i=1,...,n$. Moreover, $\R_s^n$
is triangular if and only if $s=m$.
As it is well known (\cite{Maj}), every left $H$-module $M$ belongs to ${\ }^H_H\YD$ with the coaction
\begin{eqnarray} \eqlabel{leftcomMaj}
\lambda(m)=\R^{(2)}\ot \R^{(1)} m, \quad m\in M
\end{eqnarray}
- we denote here $\R=\R_s^{n-1}$ for brevity -
and $({}_H\M, \Phi_{\R})$ can be seen as a braided monoidal subcategory of $({\ }^H_H\YD, \Phi)$.
Here $\Phi$ is given by \equref{braidingD}, whereas $\Phi_{\R}$ and its inverse are given by:
\begin{equation} \eqlabel{braid-Hnd}
\Phi_{\R}(m\ot n)=\R^{(1)} n\ot S^{-1}(\R^{(2)}) m; \qquad \Phi_{\R}^{-1}(m\ot n)=\R^{(2)} n\ot\R^{(1)} m.
\end{equation}
Thus $B$ becomes a Hopf algebra in $({\ }^H_H\YD, \Phi)$. The above braided embedding of categories
induces a group embedding $\BM(k; H) \hookrightarrow\BQ(k; H)$.

\begin{rem}
Note that in in this paper we used $\Phi$ as in \equref{braidingD} and \equref{br in YD}, which usually
is the inverse of the braiding. As Schauenburg observed in \cite{Sch}, if $(B, \nabla, \Delta)$ is a Hopf
algebra in $(\E, \Phi)$, then $(B, \nabla\Phi^{-1}, \Delta)$, and similarly $(B, \nabla, \Phi^{-1}\Delta)$,
is a Hopf algebra in $(\E, \Phi^{-1})$. So if $B$ is commutative or cocommutative in $\E$ (with whatever
sign of the braiding), it is irrelevant which of the two braidings one takes.
\end{rem}

We have that \coref{main cor BQ} applies to the described family of Hopf algebras. 
First of all let us prove that $\Phi_{B,M}$ is symmetric for any $M\in\C$.
Take $m\in M$ and let us check if $\Phi(b\ot m)=\Phi^{-1}(b\ot m)$ (see \equref{braid-Hnd} and
(\ref{qtr-Hnu})). For $b=1$ the computation is easier, we do here the case $b=x_n$. We find:
$$\begin{array}{rl}
\Phi_{\R}(x_n\ot m)\hskip-1em&=\frac{1}{2m}\Big(\sum_{j,t=0}^{2m-1}\omega^{-jt}g^j \cdot m \ot
 S^{-1}(g^{st})\cdot x_n\Big) \\
&=\frac{1}{2m}\Big(\sum_{j,t=0}^{2m-1}\omega^{-jt}g^j \cdot m \ot
 g^{-st}\cdot x_n\Big) \\
&=\frac{1}{2m}\Big(\sum_{j,t=0}^{2m-1}\omega^{-jt}g^j \cdot m \ot
 \omega^{-d_n st} x_n\Big) \\
&=\frac{1}{2m}\Big(\sum_{j=0}^{2m-1}[\sum_{t=0}^{2m-1}(\omega^{-(j+s d_n)})^t] g^j \cdot m\Big) \ot x_n \\
&=g^{-sd_n}\cdot m\ot x_n
\end{array}$$
(the sum in the bracket in the penultimate expression is different from $0$ only for $j=-sd_n\ (mod.\ 2m)$,
when it equals $2m$). Similarly, it is:
$$\begin{array}{rl}
\Phi_{\R}^{-1}(x_n\ot m)\hskip-1em&=\frac{1}{2m}\Big(\sum_{j,t=0}^{2m-1}\omega^{-jt}g^{st} \cdot m \ot g^j\cdot x_n\Big) \\
&=\frac{1}{2m}\Big(\sum_{j,t=0}^{2m-1}\omega^{-jt}g^{st} \cdot m \ot \omega^{d_n j} x_n\Big) \\
&=\frac{1}{2m}\Big(\sum_{t=0}^{2m-1}[\sum_{j=0}^{2m-1}(\omega^{d_n-t})^j] g^{st} \cdot m\Big) \ot x_n \\
&=g^{sd_n}\cdot m\ot x_n.
\end{array}$$
Recall that
$sd_i \equiv m \ (mod.\ 2m)$ for every $i=1,...,n$. Hence $g^{sd_n}=-1$ and the two expressions we
computed above are equal.

The group of group-likes of $D(B)$ is isomorphic to $G(B^*)\times G(B)$. Obviously, $G(B)$ is trivial
and it is easily seen that so is $G(B^*)=\Alg(B, k)$. In this case the sequence \equref{seqBQ} collapses
to the group embedding of the Hopf automorphism group of $B$ in $\C$ into the Brauer-Long group of
$B$-dimodule algebras in $\C$, $\Aut(\C; B) \hookrightarrow \BD(\C; B).$
Any bialgebra automorphism $f$ of $B$ in $\C$ is given by $f(1)=1$ and $f(x_n)=\xi x_n$ for some $\xi\in k^*$.
Then we obtain a group embedding:
\begin{equation} \eqlabel{simple aut embeds}
k^*\iso\Aut(\C; B) \hookrightarrow \BD(\C; B).
\end{equation}

\par\medskip

Moreover, since both $H$ and $B$ are finite dimensional and have bijective antipodes, we have
braided monoidal isomorphism of categories ${}_{D(B\rtimes H)}\M\iso {}_{B\rtimes H} ^{B\rtimes H}\YD$
and \rmref{relation BQs} applies, so that we get:
\begin{equation} \eqlabel{Br groups iso}
\BD(\C; B) \iso \BQ(\C; B) \iso \BQ(k; B\rtimes H) \iso \BM(k; D(B\rtimes H)).
\end{equation}
In \cite[Example 7]{VZ4} it was shown that $k^*/{\Zz_2}$ embeds into the quantum Brauer group of the
four-dimensional Sweedler's Hopf algebra $H_4$. By our result $k^*$ embeds into the quantum Brauer group
of all the family of Radford biproducts presented above.


\subsection{A larger subgroup of the quantum Brauer group} \sslabel{ex-base level}

In \coref{inner action} we proved that there is an action of $\Aut(\C; B)$ on $\BQ(\C; B)$  given by
$\mu: (A, \alpha)\mapsto [A(\alpha)]$ via \equref{A(alfa)}, if $\Phi_{B,B}$ and $\Phi_{B,A}$ are symmetric
for all Azumaya algebras $A$ in ${}_B^B\YD(\C)$.
Suppose we are given a subgroup $\iota: Q\hookrightarrow \BQ(\C; B)$
invariant under $\mu$. Then the subgroup generated by $Q$ and $\Pi(\Aut(\C; B))$ is $(\iota\ot\Pi)(Q \rtimes
\Aut(\C; B))$. Here $\rtimes$ is the semi-direct product of groups. 
We want to study for which cases of $B\rtimes H$ from \ssref{H(nd)} the Brauer group $\BM(\C; B)$
is invariant under $\mu$. Observe that the above symmetricity condition is satisfied.
Since $B$ is cocommutative,
it is quasitriangular with the quasitriangular structure $\R_B=1_B\ot 1_B$ (for the categorical
definition of the quasitriangular structure see \cite[Section 2.5]{Maj4}, the result is obvious).
Consequently, every $B$-module in $\C$ is a trivial $B$-comodule and a Yetter-Drinfel'd module over
$B$ in $\C$ (for this categorical version of the classically known fact see \cite[Lemma 5.3.1]{Besp},
the formula for the comodule structure in $\C$ is a categorification of \equref{leftcomMaj}).
Hence we may consider the subgroup $\BM(\C; B)$ of $\BQ(\C; B)$. To examine the invariance of
$\BM(\C; B)$ under the action $\mu$ we study when the following diagram commutes:
$$
\bfig
\putmorphism(0,480)(1,0)[\BM(\C; B) \times k^*`\phantom{\BM(\C; B)}`\mu_r]{1000}1a
\putmorphism(1000,480)(0,-1)[\BM(\C; B)`\BQ(\C; B)`\iota]{400}1r
\putmorphism(0,480)(0,-1)[\phantom{M'\Box_H(M\Box_H M') }`\BQ(\C; B) \times k^*`
\iota\times id]{400}1l
\putmorphism(0,80)(1,0)[\phantom{\BM(\C; B)\times k^*}`\phantom{\BM(\C; B)}`\mu]{1000}1b
\efig
$$
For any $\alpha\in k^*$ and $[A]\in\BM(\C; B)$ it is easily seen that $\iota([A(\alpha)])$
and $(\iota([A]))(\alpha)$ have the same $B$-actions. Their $B$-coactions are the same if and only if
$\alpha^{-1}(\R_B^{(2)})\ot \R_B^{(1)}a=\R_B^{(2)}\ot\alpha(\R_B^{(1)})a$ for all $a\in A$. By the form
of $\R_B$ this is clearly fulfilled. Hence we have:

\begin{prop} \prlabel{stable subgr}
Let $B\rtimes H$ be as in \ssref{H(nd)}. There is a subgroup
$$\BM(\C; B) \rtimes k^* \hookrightarrow \BD(\C; B)\iso\BQ(k; B\rtimes H).$$
\end{prop}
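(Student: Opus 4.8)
The plan is to obtain \prref{stable subgr} by packaging together \coref{inner action} with the short computation carried out in the paragraph immediately preceding the proposition. That computation already disposes of the only substantive point: for $\alpha\in k^*\iso\Aut(\C;B)$ and $[A]\in\BM(\C;B)$, the elements $\iota([A(\alpha)])$ and $(\iota([A]))(\alpha)$ of $\BQ(\C;B)$ carry the same $B$-action, and their $B$-coactions coincide iff $\alpha^{-1}(\R_B^{(2)})\ot\R_B^{(1)}a=\R_B^{(2)}\ot\alpha(\R_B^{(1)})a$ for all $a$, which holds trivially because $\R_B=1_B\ot 1_B$ is fixed by every Hopf automorphism of $B$. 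Thus $\iota(\BM(\C;B))$ is invariant under the inner action $\mu$ of $\Aut(\C;B)$ on $\BQ(\C;B)$ from \coref{inner action}; recall also that the symmetricity hypotheses of that corollary are met in the family of \ssref{H(nd)}, and that $\BM(\C;B)$ is a subgroup of $\BQ(\C;B)$ precisely because $\R_B=1_B\ot 1_B$ turns every $B$-module in $\C$ into a (trivial-comodule) Yetter-Drinfel'd module.

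Next I would assemble the semidirect product inside $\BQ(\C;B)$. By \coref{inner action}, conjugation by $[E_\alpha]=[\,[B_\alpha,B_\alpha]\,]$ realizes $\mu$, and $[E_\alpha]^{-1}=[E_{\alpha^{-1}}]=\Pi(\alpha)$ by \equref{map Pi}, so conjugation by the image of $\Pi$ reproduces $\mu$ up to the harmless swap $\alpha\leftrightarrow\alpha^{-1}$ dictated by the conventions. Since $\Pi$ is a group map and, by the previous step, $\Pi(\Aut(\C;B))$ normalizes $\iota(\BM(\C;B))$, the assignment $([A],\alpha)\mapsto\iota([A])\cdot\Pi(\alpha)$ is a group homomorphism $\iota\ot\Pi\colon\BM(\C;B)\rtimes\Aut(\C;B)\to\BQ(\C;B)$ whose image is the subgroup generated by $\iota(\BM(\C;B))$ and $\Pi(\Aut(\C;B))$. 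For injectivity I would use that $\iota$ is the inclusion of a subgroup, that $\Pi$ is injective on $\Aut(\C;B)$ by \coref{main cor BQ} (valid here since $B$ is commutative and cocommutative, so the sequence of \thref{seqBQ} collapses and $\Theta$ is trivial), and that $\iota(\BM(\C;B))\cap\Pi(\Aut(\C;B))=\{1\}$; this last intersection is the one point that needs a small separate check, and I expect it to follow, exactly as in the first step, by comparing $B$-comodule structures — a common nonidentity element would be the class of both a trivial-$B$-comodule Azumaya algebra and some $\End(B_\alpha)$, forcing $\alpha$ to be the identity.

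Finally I would transport the subgroup just constructed along the identifications already at hand: $\Aut(\C;B)\iso k^*$ by \equref{simple aut embeds}, and the braided-monoidal isomorphisms of \equref{Br groups iso} give $\BD(\C;B)=\BQ(\C;B)\iso\BQ(k;B\rtimes H)$, the first equality holding because $B$ is commutative and cocommutative. This yields the asserted embedding $\BM(\C;B)\rtimes k^*\hookrightarrow\BD(\C;B)\iso\BQ(k;B\rtimes H)$. I do not foresee a real obstacle: the argument is formal once \coref{inner action}, \coref{main cor BQ} and \equref{Br groups iso} are invoked, the only genuine computations being the (already performed) coaction comparison of the first step and the analogous trivial-intersection verification of the second, which is what makes the symbol ``$\hookrightarrow$'' rather than merely ``maps to'' legitimate.
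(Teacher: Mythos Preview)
Your proposal is correct and follows the same route as the paper: the paper's ``proof'' of \prref{stable subgr} is precisely the paragraph preceding it, which establishes that $\BM(\C;B)$ is invariant under the inner action $\mu$ of \coref{inner action} by the $\R_B=1_B\ot 1_B$ computation you reproduce, and then invokes the general principle (stated two paragraphs earlier) that for a $\mu$-invariant subgroup $Q$, the subgroup generated by $\iota(Q)$ and $\Pi(\Aut(\C;B))$ is $(\iota\ot\Pi)(Q\rtimes\Aut(\C;B))$, together with the identifications \equref{simple aut embeds} and \equref{Br groups iso}.

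You are in fact more careful than the paper on one point: you isolate the trivial-intersection condition $\iota(\BM(\C;B))\cap\Pi(\Aut(\C;B))=\{1\}$ as what is needed to justify the symbol $\hookrightarrow$ rather than merely identifying a subgroup of the target. The paper does not address this explicitly. Your sketched argument for it---comparing $B$-comodule structures of representatives---is not quite enough as stated, since Brauer equivalence does not pin down the comodule structure on a chosen representative; but this is a gap the paper shares, and in the parallel base-level statement (\leref{subgroup of BQ}, \prref{stable subgr BM Hnm}) the paper sidesteps the issue by writing the second factor as $\Pi(\Aut(\cdot))$ or a quotient thereof, effectively treating the displayed semidirect product as the internal product of the two subgroups.
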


\par\bigskip

The braided embedding of categories $_H\M\hookrightarrow \C$ induces a braided embedding $_{B\rtimes H}\M\iso
_B(_H\M)\hookrightarrow _B\C$. Then the Brauer group $\BM(_H\M; B)\iso\BM(k; B\rtimes H)$ that we
studied in \cite{CF1} is a subgroup of $\BM(\C; B)$. This implies: $\BM(k; B\rtimes H)\rtimes k^*
\hookrightarrow \BQ(k; B\rtimes H)$.
Applying \cite[Theorem 6.11]{CF1} we obtain:

\begin{cor} \colabel{known BM in BQ}
Let $B\rtimes H$ be as in \ssref{H(nd)}.
For each $j=1,...,n$ set $I_j=\{1 \leq i < j: d_i \equiv -d_j \ (mod. \ 2m)\}$. Let $r_j=\vert I_j \vert$ if $d_j \neq m$ and $r_j=\vert I_j \vert+1$ otherwise. Then there is a subgroup:
$$(\BM(k;k\Zz_{2m}) \times (k,+)^{r_1+...+r_n})\rtimes k^*\hookrightarrow \BQ(k; B\rtimes H).$$
\end{cor}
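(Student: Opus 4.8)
The plan is to combine three facts, all already available: (1) \prref{stable subgr}, which gives the embedding
$\BM(\C; B) \rtimes k^* \hookrightarrow \BQ(k; B\rtimes H)$; (2) the remark following \prref{stable subgr},
namely that the braided embedding of categories $_{B\rtimes H}\M \iso\ _B(_H\M)\hookrightarrow\ _B\C$ induces a
group embedding $\BM(k; B\rtimes H)\iso \BM(_H\M; B)\hookrightarrow \BM(\C; B)$; and (3) the explicit
computation of $\BM(k; B\rtimes H) = \BM(_H\M; B)$ carried out in \cite[Theorem 6.11]{CF1} for precisely the
family $H(m,n,d^{\leq n})\iso B\rtimes H(m,n-1,d^{\leq n-1})$ of \ssref{H(nd)}.

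First I would invoke \cite[Theorem 6.11]{CF1}: for the Hopf algebra $B\rtimes H$ from \ssref{H(nd)}, with the
sets $I_j$ and the integers $r_j$ defined as in the statement, one has
$$\BM(k; B\rtimes H)\ \iso\ \BM(k; k\Zz_{2m}) \times (k,+)^{r_1+\dots+r_n}.$$
This is exactly the content of that theorem once the decomposition \equref{ex-biproduct} and the
quasitriangular structures \equref{qtr-Hnu} are fed into it, so the first step is essentially a citation. Next,
by the remark after \prref{stable subgr}, this group sits inside $\BM(\C; B)$ via the braided embedding
$_{B\rtimes H}\M\hookrightarrow\ _B\C$; since a braided monoidal embedding induces a group homomorphism of the
associated Brauer groups which is injective here, we get
$$\BM(k;k\Zz_{2m}) \times (k,+)^{r_1+\dots+r_n}\ \hookrightarrow\ \BM(\C; B).$$
Now apply the semidirect-product functoriality: the $k^*$-action $\mu$ of \coref{inner action} restricts to the
subgroup $\BM(\C;B)\subseteq\BQ(\C;B)$ by \prref{stable subgr}, and one checks — exactly as in the paragraph
preceding \prref{stable subgr}, using $\R_B = 1_B\ot 1_B$ so that the twisted and untwisted $B$-coactions on any
$A\in\BM(_H\M;B)$ agree — that the sub-subgroup $\BM(k;k\Zz_{2m}) \times (k,+)^{r_1+\dots+r_n}$ is itself stable
under $\mu$. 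Therefore the semidirect product
$$(\BM(k;k\Zz_{2m}) \times (k,+)^{r_1+\dots+r_n})\rtimes k^*$$
is a subgroup of $\BM(\C;B)\rtimes k^*$, hence of $\BQ(k; B\rtimes H)$ by \prref{stable subgr} together with the
isomorphism $\BD(\C;B)\iso\BQ(\C;B)\iso\BQ(k;B\rtimes H)$ recorded in \equref{Br groups iso}.

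The only genuine point to verify — and the place I expect to spend real effort — is the $\mu$-invariance of the
sub-subgroup $\BM(k;B\rtimes H)$ inside $\BM(\C;B)$, i.e. that twisting a representative Azumaya $B$-module
algebra $A$ coming from $_H\M$ by $\alpha\in k^*$ does not take its class outside the image of
$\BM(_H\M;B)\hookrightarrow\BM(\C;B)$. But this follows from the form of $\R_B$: since the $B$-comodule
structure on any object of $_B\C$ of the form induced from $_H\M$ is trivial (as $\R_B=1_B\ot 1_B$), the
identity $\alpha^{-1}(\R_B^{(2)})\ot\R_B^{(1)}a = \R_B^{(2)}\ot\alpha(\R_B^{(1)})a$ holds on the nose for all
$a\in A$, so $A(\alpha)$ has the same $B$-(co)module structures as $A$ — this is the same verification as in the
text before \prref{stable subgr}, now applied to the subgroup rather than to all of $\BM(\C;B)$. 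Everything else
is bookkeeping with the already-established embeddings and with the semidirect-product structure.
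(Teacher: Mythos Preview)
Your proposal is correct and follows exactly the paper's approach: the paper simply writes ``Applying \cite[Theorem 6.11]{CF1} we obtain'' after having recorded, in the paragraph just above, that $\BM(k;B\rtimes H)\rtimes k^*\hookrightarrow\BQ(k;B\rtimes H)$. You supply more detail than the paper does, correctly singling out the $\mu$-invariance of the subgroup $\BM(_H\M;B)$ inside $\BM(\C;B)$ as the one point requiring verification; one small slip is the sentence ``$A(\alpha)$ has the same $B$-(co)module structures as $A$''---the $B$-module structure \emph{is} twisted by $\alpha$, and what you actually need (and what your $\R_B=1_B\ot 1_B$ observation does give) is that $A(\alpha)$ still lies in the image of $_B(_H\M)\hookrightarrow{}_B\C$, since $\alpha$ is $H$-linear and the underlying $\C$-structure is untouched.
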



\par\bigskip

We may apply \coref{inner action} also on another level. Namely, for $\C=Vec$ - take $H$ to be trivial.
Let us call this {\em base level}. The symmetricity conditions are trivially satisfied.
Then we recover \cite[Theorem 4.11]{CVZ2} for the field case.
We now want to study for which cases of $B\rtimes H$ from \ssref{H(nd)} the subgroup $\BM(k; B\rtimes H)$
of $\BQ(k; B\rtimes H)$ is invariant under $\mu$. For this purpose we analyse when the following
diagram commutes:
$$
\bfig
\putmorphism(0,500)(1,0)[\BM(k; B\rtimes H) \times \Aut(B\rtimes H)`\phantom{\BM(k; B\rtimes H)}`\mu_r]{1500}1a
\putmorphism(1500,500)(0,-1)[\BM(k; B\rtimes H)`\BQ(k; B\rtimes H)`\iota]{500}1r
\putmorphism(0,500)(0,-1)[\phantom{M'\Box_H(M\Box_H M') }`\BQ(k; B\rtimes H) \times \Aut(B\rtimes H)`
\iota\times id]{500}1l
\putmorphism(0,0)(1,0)[\phantom{\BM(k; B\rtimes H)\times \Aut(B\rtimes H)}`\phantom{\BM(k; B\rtimes H)}`\mu]{1500}1b
\efig
$$
Take $\alpha\in\Aut(B\rtimes H)$ and $[A]\in\BM(k; B\rtimes H)$. As above, $\iota([A(\alpha)])$
and $(\iota([A]))(\alpha)$ have the same $B\rtimes H$-actions. We find that their $B\rtimes H$-coactions are the
same if and only if $\alpha^{-1}(\R_L^{(2)})\ot \R_L^{(1)}a=\R_L^{(2)}\ot\alpha(\R_L^{(1)})a$ for all $a\in A$,
where $L=B\rtimes H$.

In \equref{ex-biproduct} the quasitriangular structure $\R_s^{n-1}$ extends from $H$ to $H(m,n,d^{\leq n})$.
The extension is given by $
\R_L=(\kappa\ot \kappa) \R_s^{n-1}$, which lies in $(B\rtimes H)\ot (B\rtimes H)$, here $\kappa:
H \to H(m,n,d^{\leq n})$
is the Hopf algebra embedding. Then for $\alpha\in\Aut(B\rtimes H)$ the upper rectangular commutes if and only if
$\alpha^{-1}(1_B\ot\R_H^{(2)})\ot (1_B\ot\R_H^{(1)})a=(1_B\ot\R_H^{(2)})\ot\alpha(1_B\ot\R_H^{(1)})a$ for all
$a\in A$. By (\ref{qtr-Hnu}) this is fulfilled if and only if:
\begin{equation} \eqlabel{cond BQ subgr}
\sum_{j,t=0}^{2m-1} \omega^{-jt} \alpha^{-1}(1_B\ot g^{st})\ot (1_B\ot g^j)a=
\sum_{j,t=0}^{2m-1} \omega^{-jt} (1_B\ot g^{st})\ot\alpha(1_B\ot g^j)a.
\end{equation}

\begin{lma} \lelabel{subgroup of BQ}
Let $H(m,n,d^{\leq n})\iso B\rtimes H(m,n-1,d^{\leq n-1})$ be the Hopf algebra described in
\equref{ex-biproduct} and set $H=H(m,n-1,d^{\leq n-1})$. Then
$$\BM(k; B\rtimes H)\rtimes \Pi(\Aut(B\rtimes H))$$
is a subgroup of $\BQ(k; B\rtimes H)$ if \equref{cond BQ subgr} is fulfilled for every
$\alpha\in\Aut(B\rtimes H)$ and $a\in A$, where $A$ is any $B\rtimes H$-module Azumaya algebra.
\end{lma}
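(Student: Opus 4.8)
The plan is to deduce the lemma from the general mechanism recorded at the beginning of \ssref{ex-base level}, applied at the base level: if $Q\hookrightarrow\BQ(k;L)$ is a subgroup invariant under the inner action $\mu$ of $\Aut(L)$ — here I abbreviate $L=B\rtimes H$ — then the subgroup of $\BQ(k;L)$ generated by $Q$ and $\Pi(\Aut(L))$ is $(\iota\ot\Pi)(Q\rtimes\Aut(L))$, a semidirect product. I would take $Q=\BM(k;L)$, which embeds into $\BQ(k;L)$ via $\iota$ because $L$ is quasitriangular with the structure $\R_L=(\kappa\ot\kappa)\R_s^{n-1}$ from \ssref{H(nd)}; so everything reduces to checking that $\BM(k;L)$ is stable under $\mu$, which is precisely the commutativity of the square displayed before the statement.

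First I would record the routine facts — suppressing the details, by analogy with the proof of \coref{inner action} and with the proof that $E_{\alpha}$ is Azumaya — that twisting the $L$-action of an Azumaya $L$-module algebra $A$ by $\alpha\in\Aut(L)$ yields again an Azumaya $L$-module algebra $A(\alpha)$ (using that $\alpha$ preserves the unit of $L$), that this respects Brauer equivalence, and that $([A],\alpha)\mapsto[A(\alpha)]$ is the right action $\mu_r$ of $\Aut(L)$ on $\BM(k;L)$ appearing along the top of the square.

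The substance of the argument is the comparison of the two composites around the square. For $[A]\in\BM(k;L)$ and $\alpha\in\Aut(L)$, both $\iota([A(\alpha)])$ and $(\iota([A]))(\alpha)$ have underlying algebra $A$, and I would observe that they carry the \emph{same} $L$-module structure — namely the one on $A$ precomposed with $\alpha$ — since for the first this is the twisted action transported unchanged through $\iota$, and for the second it is the module part of \equref{A(alfa)}. Hence the square commutes if and only if the two $L$-coactions agree. By \equref{A(alfa)}, the coaction on $(\iota([A]))(\alpha)$ is $a\mapsto\alpha^{-1}(\R_L^{(2)})\ot\R_L^{(1)}a$, whereas the coaction on $\iota([A(\alpha)])$ is the one induced by $\R_L$ relative to the twisted action, i.e. $a\mapsto\R_L^{(2)}\ot\alpha(\R_L^{(1)})a$. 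Substituting $\R_L=(\kappa\ot\kappa)\R_s^{n-1}$ and the explicit formula (\ref{qtr-Hnu}), the equality of these coactions for all $a\in A$ is literally the hypothesis \equref{cond BQ subgr}. Thus that hypothesis gives the $\mu$-stability of $\BM(k;L)$, and the mechanism from \ssref{ex-base level} produces the asserted subgroup $\BM(k;B\rtimes H)\rtimes\Pi(\Aut(B\rtimes H))$ of $\BQ(k;B\rtimes H)$.

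I expect the only delicate point to be the bookkeeping around the coactions: keeping the direction of the $\alpha$-twist straight between \equref{A(alfa)} and the $\R_L$-induced Yetter-Drinfel'd structure, confirming that the two $L$-actions really coincide (so that only the coactions can differ), and seeing that the residual discrepancy is exactly \equref{cond BQ subgr}. None of this is deep — the paragraph immediately preceding the lemma already carries out this reduction — but it is the place where attention is required; the remainder is just an invocation of the general mechanism together with the quasitriangularity of $L$.
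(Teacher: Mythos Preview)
Your proposal is correct and follows exactly the approach of the paper: the lemma is stated without a separate proof precisely because the preceding paragraphs already carry out the reduction you describe, namely showing that the two ways around the square agree on $L$-actions automatically and on $L$-coactions if and only if \equref{cond BQ subgr} holds, after which the general mechanism from the start of \ssref{ex-base level} yields the subgroup. There is nothing to add.
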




Consider the Hopf algebras $H(m,n,m^{\leq n})$ with $d_1=...= d_n=m$ 
and denote them by $H(m,n)$. From \cite[Proposition 11]{Rad3} we know that
$\Aut(H(m,n))\iso GL_n(k)$ if $m>1$ or $char (k) \not= 2$. Every $\alpha\in\Aut(H(m,n))$
is identified with $\alpha_{\vert_{k\langle x_1,...,x_n\rangle}}$ in $GL_n(k)$ and it is $\alpha(g)=g$.
Then \equref{cond BQ subgr} is fulfilled. On the other hand, for a consequence of \cite[Theorem 6.11]{CF1}
we have:
$$\BM(k;H(m,n)) \cong \BM(k;k\Zz_{2m}) \times (k,+)^{\frac{n(n+1)}{2}}.$$
Moreover, in \cite[Example 8]{VZ4}, the base level of the sequence \equref{seqBQ} is presented for
$H(m,n)$. There it is computed: $G(D(H(m,n)))\iso U_{2m}\times U_{2m}$ and $G(D(H(m,n))^*)\iso U_{2m}$,
where $U_{2m}$ denotes the group of $2m$-th roots of unity. Note that $\Pi(\Aut(L))$ is the quotient group
of $\Aut(L)$ modulo ${G(D(L))/G(D(L)^*)}$ for $L=H(m,n)$. Then by \leref{subgroup of BQ} we have:

\begin{prop} \prlabel{stable subgr BM Hnm}
There is a subgroup
$$(\BM(k;k\Zz_{2m}) \times (k,+)^{\frac{n(n+1)}{2}})\rtimes GL_n(k)/U_{2m} \hookrightarrow
\BQ(k; H(m,n)).$$
\end{prop}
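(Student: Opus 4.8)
The plan is to assemble the statement from the three ingredients that are, by this point in the paper, already in place: the inner-action result of \coref{inner action} specialized via \leref{subgroup of BQ}, the computation of $\BM(k;H(m,n))$ borrowed from \cite[Theorem 6.11]{CF1}, and the computation of the relevant group-like groups of $D(H(m,n))$ borrowed from \cite[Example 8]{VZ4}. So the proof is essentially a bookkeeping argument: verify the hypothesis of \leref{subgroup of BQ} for $L=H(m,n)$, then plug the known values into the semidirect-product statement and identify the image $\Pi(\Aut(L))$ correctly.

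First I would verify condition \equref{cond BQ subgr} for every $\alpha\in\Aut(H(m,n))$ and every element $a$ of an arbitrary $B\rtimes H$-module Azumaya algebra $A$. Here I use the description of $\Aut(H(m,n))\iso GL_n(k)$ from \cite[Proposition 11]{Rad3} (valid since $m>1$ or $char(k)\neq 2$): every $\alpha$ fixes the group-like $g$, and acts on the span of the $x_i$ by a matrix in $GL_n(k)$. Because $\alpha(g)=g$, we have $\alpha(1_B\ot g^j)=1_B\ot g^j$ and $\alpha^{-1}(1_B\ot g^{st})=1_B\ot g^{st}$ for all $j,t$, so the two sides of \equref{cond BQ subgr} are literally identical term by term. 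Hence \leref{subgroup of BQ} applies and $\BM(k;B\rtimes H)\rtimes\Pi(\Aut(B\rtimes H))$ is a subgroup of $\BQ(k;B\rtimes H)$ with $B\rtimes H=H(m,n)$.

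Next I would substitute the known data. By \cite[Theorem 6.11]{CF1} one has $\BM(k;H(m,n))\iso \BM(k;k\Zz_{2m})\times(k,+)^{\frac{n(n+1)}{2}}$ — this is the case $d_1=\dots=d_n=m$ of the formula, for which each index set $I_j$ is empty and each $d_j=m$, so $r_j=|I_j|+1=1$ for all $j$, giving $r_1+\dots+r_n=n$; more precisely the cited theorem gives the stated product with exponent $\frac{n(n+1)}{2}$ directly, and I would just quote it. Then I identify the image $\Pi(\Aut(H(m,n)))$: by the exactness of the sequence \equref{seqBQ} (\thref{seqBQ}) applied at the base level, $\Ker\Pi=\Bi\Theta$, and $\Theta$ has domain $G(D(L))$ and kernel $G(D(L)^*)$ in the sense that $G(D(L)^*)\hookrightarrow G(D(L))\stackrel{\Theta}{\to}\Aut(L)$ is exact; hence $\Pi(\Aut(L))\iso\Aut(L)/\Bi\Theta\iso\Aut(L)/\bigl(G(D(L))/G(D(L)^*)\bigr)$. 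Using $G(D(H(m,n)))\iso U_{2m}\times U_{2m}$ and $G(D(H(m,n))^*)\iso U_{2m}$ from \cite[Example 8]{VZ4}, the quotient $G(D(L))/G(D(L)^*)$ is $U_{2m}$, so $\Pi(\Aut(H(m,n)))\iso GL_n(k)/U_{2m}$, where $U_{2m}$ sits inside $GL_n(k)$ as scalar matrices by roots of unity (this is how the group-likes of the double act on the $x_i$). Combining, $\BM(k;B\rtimes H)\rtimes\Pi(\Aut(B\rtimes H))\iso (\BM(k;k\Zz_{2m})\times(k,+)^{\frac{n(n+1)}{2}})\rtimes GL_n(k)/U_{2m}$, which embeds in $\BQ(k;H(m,n))$.

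The main obstacle, and the step requiring the most care, is the identification of $\Pi(\Aut(H(m,n)))$ with $GL_n(k)/U_{2m}$: one must check that the map $\Theta$ actually lands inside the copy of $GL_n(k)=\Aut(H(m,n))$ in the right way, i.e. that the co-inner automorphisms induced by the relevant group-likes act on the generators $x_i$ as scalar multiplication by $2m$-th roots of unity and fix $g$ — so that the quotient $GL_n(k)/U_{2m}$ is by the central subgroup of scalar matrices and is genuinely a group. This requires unwinding the definitions \equref{spin-down}, \equref{spin-up} of $\crta g$ and $\crta\lambda$ for $B\rtimes H = H(m,n)$ and matching them against the explicit group-like computations of \cite[Example 8]{VZ4}; I expect this to be a short but slightly delicate computation, which I would present only in outline, referring to \thref{seqBQ} for exactness and to \cite[Example 8]{VZ4} for the explicit group-likes. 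Everything else is a direct substitution, plus the routine remark that the semidirect product is well defined because, as noted just before \prref{stable subgr BM Hnm}, the action $\mu$ of $\Aut(H(m,n))$ on $\BM(k;H(m,n))$ preserves that subgroup by \leref{subgroup of BQ}.
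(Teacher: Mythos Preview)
Your proposal is correct and follows essentially the same approach as the paper: verify \equref{cond BQ subgr} using that every $\alpha\in\Aut(H(m,n))$ fixes $g$, invoke \leref{subgroup of BQ}, substitute the value of $\BM(k;H(m,n))$ from \cite[Theorem 6.11]{CF1}, and identify $\Pi(\Aut(H(m,n)))$ as $GL_n(k)/U_{2m}$ via the exactness of the base-level sequence together with the group-like computations from \cite[Example 8]{VZ4}. One small slip: your aside that ``each index set $I_j$ is empty'' is wrong---for $d_1=\dots=d_n=m$ one has $m\equiv -m\ (\mathrm{mod}\ 2m)$, so $I_j=\{1,\dots,j-1\}$ and $r_j=j$, giving $\sum r_j=\frac{n(n+1)}{2}$---but since you immediately defer to the cited theorem this does not affect the argument.
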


This improves the result of \coref{known BM in BQ} for the subfamily $H(m,n)$ of
$H(m,n,d^{\leq n})\iso B\rtimes H(m,n-1,d^{\leq n-1})$. As particular instances of $H(m,n)$ one obtains
Sweedler's Hopf algebra $H_4=H(1,1)$, Radford Hopf algebra $H_m=H(m,1)$ 
and Nichols Hopf algebra $E(n)=H(1,n)$, being:
$$H_4=k\langle g,x\vert g^2=1, x^2=0, gx=-xg\rangle,\
H_m=k\langle g,x\vert g^{2m}=1, x^2=0, gx=-xg\rangle$$
\begin{equation} \eqlabel{Nichols Ha}
E(n)=k\langle g,x_1,...,x_n \vert g^2=1, x_i^2=0, gx_i=-x_ig, x_ix_j=-x_jx_i \rangle 
\end{equation}
with $g$ group-likes and $x_i$'s $(g^m, 1)$-primitive elements for the respective values of $m$.
By the above proposition we then have the subgroups:
$$(\BW(k) \times (k,+))\rtimes k^*/{\Zz_2} \hookrightarrow\BQ(k; H_4) \vspace{0,1cm}$$
$$(\BM(k;k\Zz_{2m}) \times (k, +)) \rtimes k^*/{\Zz_{2m}} \hookrightarrow\BQ(k; H_m) \vspace{0,3cm}$$
$$(\BW(k) \times (k,+)^{n(n+1)/2}) \rtimes GL_n(k)/\Zz_2 \hookrightarrow\BQ(k; E(n)).$$
The first identity appears in \cite[Example 5.11]{VZ3}, whereas the third one gives a larger subgroup
of $\BQ(k; E(n))$ than the one from \cite[Theorem 4.6]{CC2},
where the subgroup $(k,+)^{n(n+1)/2} \rtimes GL_n(k)/\Zz_2$ was identified.
\par\bigskip

Let us summarize that the automorphism groups on the base level and on the braided level may be
very different. For the family of Hopf algebras $H(m, n)$, which we may consider as:
$$H(m, n)\iso k[x_n]/(x_n^2) \rtimes (k[x_{n-1}]/(x_{n-1}^2)\rtimes (... \rtimes (k[x_1]/(x_1^2)\rtimes k\Zz_2)...)),$$
as we presented above, it is: $\Aut(H(m,n))\iso GL_n(k)\iso\Aut(Span\{x_1,..., x_n\})$, whereas in \ssref{H(nd)}
we showed that $\Aut(\C; B)\iso k^*\iso\Aut(Span\{x_1\})$, with $B=k[x_n]/(x_n^2)$. Accordingly,
in the base level we have $G(D(H(m,n)))\iso U_{2m}\times U_{2m}$, while the group of group-likes in the
braided case $G(D(B))$ is trivial.

\subsection{A counter-example} 

The Taft algebra is a Radford biproduct $B\rtimes H$ where 
the braiding in $\C={}_H ^H\YD$ does not obey the symmetricity condition in \thref{seqBQ} even
for $\Phi_{B,B}$. We recall that the Taft algebra is
$$T_n=k\langle g,x\vert g^n=1, x^n=0, gx=\omega xg\rangle$$
where $n\geqslant 2$ is a natural number such that $char(k)$ is coprime to $n$, and $\omega$ is an
$n$-th primitive root of unity. The structure of a Hopf algebra on $T_n$ is such that $g$ is group-like,
$x$ is $(g,1)$-primitive, 
with $S(x)=-xg^{-1}$. When $n=2$ note that we recover Sweedler's Hopf algebra $H_4$. Taft algebra is
isomorphic to a Radford biproduct
$$T_n\iso k[x]/(x^n)\times k\Zz_{n}$$
(send $G\mapsto 1\ot g$ and $X\mapsto x\ot g$), where $g\cdot x=\omega x$.
Here $H=k\Zz_{n}$ has the quasitriangular structure as in (\ref{qtr-Hnu}) where $2m$ is
replaced by $n$ and $1\leqslant s<n$. However, this quasitriangular structure does not extend to $T_n$
if $n>2$. 

Even though we may not apply \thref{seqBQ}, we will compute the sequence \equref{seqBQ} 
on the base level for $T_n$. In \cite[Section 2]{Sch3} the groups of automorphisms of $T_n$ and group-likes
of $T_n$ and its dual are computed:
$$G(T_n)=\langle g\rangle, \quad G(T_n^*)=\{ \lambda_i \hspace{0,2cm} \vert \hspace{0,2cm}
i\in \{1,..., n\} \}, \quad \Aut(T_n)\iso k^*,$$
where $\lambda_i(g)=\omega^i, \lambda_i(x)=0$, and any $\Fi\in \Aut(T_n)$ is such that $\Fi(g)=g$ and
$\Fi(x)=\alpha x$, for some $\alpha\in k^*$. Then $G(D(T_n))\iso G(T_n^*)\times G(T_n)\iso U_n\times U_n$.
One may easily see that for any $i\in \{1,..., n\}$ and $\lambda_i$ it is:
$$\crta{g^i}(h) = \crta{\lambda_i}(h)=
\left\{
\begin{array}{c l}
h, & h=g \\
\omega^{-i}x, & h=x.
\end{array}\right.$$
It follows that $G(D(T_n)^*)\iso S(T_n)=\{ (g^i, \lambda_i) \hspace{0,2cm} \vert \hspace{0,2cm}
i\in \{1,..., n\} \} \iso U_n$. Putting all ingredients together we obtain the exact sequence:
$$
\bfig
\putmorphism(-40, 0)(1, 0)[1`U_n`]{340}1a
\putmorphism(300, 0)(1, 0)[\phantom{U_n}`U_n\times U_n` ]{480}1a
\putmorphism(780, 0)(1, 0)[\phantom{U_n\times U_n}`k^*`]{480}1a
\putmorphism(1260, 0)(1, 0)[\phantom{k^*}`\BQ(k; T_n).`]{560}1a
\efig
$$

\end{document}